\theoremstyle{thmstyleone}%
\newtheorem{theorem}{Theorem}[section]
\newtheorem{lemma}{Lemma}[section]
\newtheorem{corollary}{Corollary}[section]
\theoremstyle{thmstyletwo}%
\newtheorem{example}{Example}%
\theoremstyle{thmstylethree}%
\newtheorem{definition}{Definition}[section]%
\newtheorem{assumption}{Assumption}[section]
\newcommand{\rb}[1]{ \left( #1 \right) }
\newcommand{\rrb}[1]{ \left\lbrace #1 \right\rbrace }
\newcommand{\curbr}[1]{ \lbrace #1 \rbrace }
\newcommand{\cb}[1]{ \left[ #1 \right] }
\newcommand{\abs}[1]{\left|#1\right|}
\newcommand{\norm}[1]{\left\lVert#1\right\rVert}
\newcommand{\suml}{\sum\limits}
\newcommand{\intl}{\int\limits}
\newcommand{\liml}{\lim\limits}
\newcommand{\supl}{\sup\limits}
\newcommand{\LandauO}{ \mathcal{O} }
\newcommand{\bareps}{ {\ddot \mu} }
\newcommand{\sgn}{\operatorname{sgn}}
\newcommand{\IndNr}[1]{\mathds{1}_{\left\lbrace#1\right\rbrace}}
\newcommand{\Si}{\operatorname{Si}}
\newcommand{\C}{\mathbb{C}}
\newcommand{\Kbb}{\mathbb{K}}
\newcommand{\K}{\mathbb{K}}
\newcommand{\N}{\mathbb{N}}
\newcommand{\R}{\mathbb{R}}
\newcommand{\Z}{\mathbb{Z}}
\newcommand{\ONE}{\mathds{1}}
\newcommand{\Mu}{\operatorname{M}}
\begin{document}

\title[Deconvolution through symmetrization]{Deconvolution of arbitrary distribution functions and densities}

\author*[1]{\fnm{Henrik} \sur{Kaiser}}\email{hkaiser@icloud.com}

\affil*[1]{\orgaddress{\city{Schotten}, \postcode{63679}, \state{Hesse}, \country{Germany}}}

\abstract{In this article we propose a novel approach for the deconvolution of the distribution function associated with an arbitrary probability measure (and possibly existing density). We first show that the initial convolution equation always can be transformed to a convolution equation that involves a symmetric distribution function, whose characteristic function has its values in the unit interval. As a consequence, the characteristic function of the target measure turns out as the limit of a geometric series. By truncation of this series, appro\-xi\-ma\-tions for distribution function and density are established. The convergence properties of these approximations are examined in detail across diverse setups.}

\keywords{additive convolution, distribution functions, deconvolution, Fourier analysis, inverse problems, convolution transforms, inversion formula, approximate identities}


\pacs[MSC Classification]{60E05, 60E10, 42A38, 44A35}

\maketitle

\section{Introduction}

Consider the distribution functions (d.f. or d.fs., for short) $F_\chi$ and $F_\mu$ associated with arbitrary probability measures $\chi$ and $\mu$. It is well-known that then also the convolution $\chi\ast\mu$ constitutes a probability measure, whose d.f. is represented by the additive convolution of $F_\chi$ and $F_\mu$, that is
\begin{align} \label{Verteilungsfaltung}
F_{\chi\ast\mu}(\xi) = \intl_{-\infty}^\infty F_\chi(\xi-z)F_\mu(dz) \hspace{1cm} (\xi \in \R).
\end{align}
As there is no danger of confusion, we omit the prefix ''additive'' and simply speak of convolution. Whenever $F_\chi$ possesses a (Lebesgue) density $f_\chi$, the density corresponding to $F_{\chi\ast\mu}$ is
\begin{align} \label{Dichtenfaltung}
f_{\chi\ast\mu}(\xi) = \intl_{-\infty}^\infty f_\chi(\xi-z) F_\mu(dz) \hspace{1cm} (\xi\in\R),
\end{align}
where $F_\mu(dz) = f_\mu(z) dz$, if $F_\mu$ also is absolutely continuous with density $f_\mu$. In older literature, a more common notion for the above integrals is (Stieltjes) resultant or the German word Faltung (see, e.g., p. 51--52 in \cite{titchmarsh1937} or p. 84 in \cite{widder1946}). Alternatively, $F_{\chi\ast\mu}$ and $f_{\chi\ast\mu}$ can be conceived as the convolution transforms  with respect to the d.f. $F_\mu$, respectively of $F_\chi$ and $f_\chi$ (cf. \cite{HirschmanWidder1955}). Following the common notion from applications, we refer to $\mu$ as some kind of error or noise measure. The recovery of $F_\chi$ or $f_\chi$, given $F_{\chi\ast\mu}$ or $f_{\chi\ast\mu}$ and $F_\mu$ is mostly the same. It is a well-known mathematical problem with a long history, in modern literature called deconvolution (cf. Ch. XI in \cite{titchmarsh1937}, $\S$1.9 in \cite{tricomi1985integral} or Ch. V, $\S$8 in \cite{widder1946}). Convolution is particularly associated with Fourier analysis, as it simplifies to a multiplicative product in the Fourier domain. In fact, in terms of characteristic functions (c.f. or c.fs., for brevity), each of the above convolution equations implies that
\begin{align} \label{additFiVcF}
\Phi_{\chi\ast\mu}(t) = \Phi_\chi(t)\Phi_\mu(t) \hspace{1cm} (t \in \R).
\end{align}
Whenever $t \in \R$ with $\Phi_\mu(t) \neq 0$, this identity is equivalent to the quotient
\begin{align} \label{nastyquotient}
\Phi_\chi(t) = \frac{\Phi_{\chi\ast\mu}(t)}{\Phi_\mu(t)}.
\end{align}
It shows that $\Phi_\chi(t)$ is uniquely determined for all $t \in \R$, only if $\Phi_\mu(t) \neq 0$, for Lebesgue almost every $t\in \R$. Then, the above representation holds Lebesgue almost everywhere and elsewhere by continuity. Furthermore, through inversion of $\Phi_\chi$, one readily returns to $F_\chi$. This approach becomes infeasible if there exists $(t_1,t_2)\subset\R$ with $\Phi_\mu(t)=0$ for any $t \in (t_1, t_2)$. In this event, $\Phi_\chi(t)$ is indeterminable, for all $t \in (t_1, t_2)$, and hence $F_\chi$ is eventually unidentifiable. Closely related to this problem is the unboundedness of the inverse operator, when considering convolution on function spaces. For that reason, in operator theory, according to \cite{Hadamard1902}, deconvolution is considered an ill-posed inverse problem.
\\
\hspace*{1em}The aim of this text consists in establishing an alternative to the ordinary Fourier inversion formula for the recovery of $F_\chi$ from its convolution transform $F_{\chi\ast\mu}$. Our focus lies in fact on d.fs., as these exist without loss of ge\-ne\-rality. Starting point of our approach is the observation that the right hand side of the identity (\ref{nastyquotient}) represents the limit of a geometric series, whenever $\abs{1-\Phi_\mu(t)}<1$. Yet, as this is rarely the case, after a short discussion of preliminaries in \S\ref{SecNotNPrel}, we invoke a symmetrizing and thereby convergence generating factor in \S\ref{SecSymDecProb}. The existence of such a factor relies on the principle that the product of an arbitrary c.f. with its complex conjugate again establishes a c.f., that is associated with a symmetric distribution and particularly non-negative. As a result of our symmetrization, we arrive at a transformed convolution equation, which eventually facilitates the desired geometric series expansion in the Fourier domain. By truncation of this expansion, a novel signed d.f. is established, the so-called deconvolution function. Basic properties of the deconvolution function are reviewed in \S\ref{SubSecDekma}, like absolute continuity with respect to the Lebesgue measure in some cases. Representations as Fourier integrals are introduced in \S\ref{SecFourierIntDecFct}, with the aid of which, in \S\ref{ChConvDecFct}, convergence is discussed, as the truncation index grows to infinity. As a consequence thereof, the deconvolution function and its derivative represent appro\-xi\-ma\-tions for the target d.f. $F_\chi$ and for the possibly existing density $f_\chi$. The article is concluded by \S\ref{SubSummary} with a summary and an outlook on future results. Several figures throughout the text illustrate the performance of the newfound functions.

\section{Notation and preliminaries} \label{SecNotNPrel}

Throughout the text, if $Q : \R \rightarrow \C$ is an arbitrary function, we indicate the limit from the left and from the right at $\xi \in \R$ by $Q(\xi-)$ and by $Q(\xi+)$, respectively. The set of discontinuities of $Q$ is denoted by $D_Q$, i.e., $\xi \in D_Q$ if and only if $Q(\xi+)\neq Q(\xi-)$. Conversely, the associated continuity points/intervals are $C_Q := \R \setminus D_Q$. Also, if existent, $Q(\pm\infty) := \lim_{\xi \rightarrow \pm\infty} Q(\xi)$. Particularly if $Q$ is continuous on $\R$ and both of these limits exist, it is continuous on $\overline \R := \R \cup \curbr{\pm \infty}$. Furthermore, $\norm{ Q }_p$, for $0 < p \leq \infty$, refers to the $L^p$-norm and $\Delta(A, B) := \inf_{ (a, b) \in A \times B } \abs{a-b}$ to the distance of two sets $A, B \subseteq\R$. The Dirac measure with mass at $a \in \R$ is represented by $\delta_{\curbr{a}}$, whereas $\ONE_\mathcal{M}$ stands for the indicator of the set $\mathcal{M} \subset \overline \R$. In the usual fashion, we use the big $\LandauO$ and small $o$ notation, and we indicate by $\Re z$, $\Im z$ and $\overline z$, respectively, the real part, the imaginary part and the complex conjugate of $z \in \C$. We moreover write $\abs{Q}([a, b])$ for the variation of $Q$ over the interval $[a, b] \subset \overline \R$ (compare $\S$2.1 in \cite{wheeden2015measure}), with a straightforward extension to infinite intervals, if $Q(\pm\infty)$ exists. In particular, if $Q$ has a continuous derivative $Q'$ on $[a,b]$, equivalently $\abs{Q}([a, b]) = \int_a^b \abs{ Q'(t) } dt$. In any case, $Q$ is said to be of bounded variation on $[a, b]$, if $\abs{Q}([a, b]) < \infty$.
\\
\hspace*{1em}Denote by $\mathcal{M}(\K, \mathcal{B}(\R))$ the vector space of signed (if $\Kbb = \R$) or complex (if $\Kbb = \C$) measures, with $\mathcal{B}(\R)$ being the Borel $\sigma$-algebra on $\R$. We write $D_\nu := \curbr{ x \in \R : \nu(\curbr{x})\neq0}$ for the point masses of $\nu \in \mathcal{M}(\K, \mathcal{B}(\R))$, also called atoms. Each $\nu \in \mathcal{M}(\K, \mathcal{B}(\R))$ induces a d.f., written $F_\nu(\xi):=\nu((-\infty, \xi])$, for $\xi \in \R$. We specifically define $F_\nu\curbr{\xi} := F_\nu(\xi+) - F_\nu(\xi-)$, so that $\xi$ is a discontinuity of $F_\nu$ if and only if $F_\nu\curbr{\xi}\neq0$. All discontinuities of $F_\nu$ are finite and exactly coincide with the atoms of $\nu$. Basically, integration with respect to $F_\nu$ and $\nu$ is the same, i.e., $F_\nu(dx)=\nu(dx)$. Moreover, we refer to $|\nu|(E)$ as the total variation on $E \in \mathcal{B}(\R)$ of $\nu$ and note that $|\nu|(\R) < \infty$, by construction of this vector space (see, e.g., $\S$9A in \cite{axler2019measure}). This notion is in fact equi\-va\-lent to the variation of functions. On the one hand, if $Q$ is an arbitrary function of bounded variation on $[a, b]\subseteq\overline\R$, then $\nu_Q(E) := \int_{E \cap [a, b]} Q(dx)$, for $E \in \mathcal{B}(\R)$, constitutes a (signed or complex) measure, i.e., $\nu_Q \in \mathcal{M}(\K, \mathcal{B}(\R))$. On the other hand, the (signed or complex) d.f. $F_\nu$ of any $\nu \in \mathcal{M}(\K, \mathcal{B}(\R))$ is of bounded variation on $\overline \R$. In the sequel, absolute continuity always refers to the Lebesgue measure. If $\nu \in \mathcal{M}(\K, \mathcal{B}(\R))$ is in fact absolutely continuous, we indicate the associated density by $f_\nu \in L^1(\R)$, i.e., $\nu(dx) = f_\nu(x)dx$. Of particular interest in this text are probability measures, which are precisely $\nu \in \mathcal{M}(\K, \mathcal{B}(\R))$ with $\nu\geq0$ and $\nu(\R) = 1$. For these, $\Mu_\nu(k) := \int_{-\infty}^\infty x^k \nu(dx)$, with $k \in \N_0$, stands for the $k$-th moment. Moreover, the convolution of $\nu_1, \nu_2 \in \mathcal{M}(\K, \mathcal{B}(\R))$ refers to the integral $(\nu_1 \ast \nu_2)(E) := \int_\R \int_\R \ONE_E(x+y) \nu_1(dx) \nu_2(dy)$, for $E \in \mathcal{B}(\R)$, and is well-defined for any Borel set. If $E := (-\infty,\xi]$, for $\xi\in\R$, we arrive at the convolution of the associated d.fs. $(F_{\nu_1} \ast F_{\nu_2})(\xi) = \int_\R F_{\nu_1}(\xi-x) F_{\nu_2}(dx)$, with $F_{\nu_1} \ast F_{\nu_2} = F_{\nu_1 \ast \nu_2}$. Convolution of signed measures obviously constitutes some kind of commutative product, the neutral element being the Dirac measure with mass at the origin $\delta_{\curbr{0}}$. Accordingly, it makes sense to define $\nu^{\ast 0} := \delta_{ \curbr{0} }$ and by $\nu^{\ast k} := \nu \ast \nu^{\ast(k-1)}$, for $k \in \N$, the $k$-th convolution power of $\nu$. In this fashion, one can also verify the binomial convolution identity
\begin{align} \label{BinConvId}
(\delta_{\curbr{0}} - \nu)^{\ast \ell} = \suml_{k=0}^\ell \binom{\ell}{k} (-1)^k \nu^{\ast k} \hspace{1cm} (\ell \in \N_0).
\end{align}
Finally, convolution of $q_1, q_2 \in L^1(\R)$ is defined by $(q_1 \ast q_2)(\xi) := \int_\R q_1(\xi-y)q_2(y)dy$. The notion of convolution powers, however, can not directly be adopted to the space $L^1(\R)$, since $\delta_{\curbr{0}}$ is not absolutely continuous with respect to the Lebesgue measure.
\\
\hspace*{1em}The Fourier-Stieltjes transform of $F_\nu$, for $\nu \in \mathcal{M}(\K, \mathcal{B}(\R))$, is defined by the complex-valued integral
\begin{align} \label{DefcFX}
\Phi_\nu(t) := \intl_{-\infty}^\infty e^{itx} \nu(dx) \hspace{1cm} (t\in\R).
\end{align}
If $\nu$ is a probability measure, we refer to $\Phi_\nu$ as the c.f.. In any case, $\Phi_\nu(t)$ establishes a uniformly continuous function of $t \in \R$. Moreover, by means of inversion formulae, $F_\nu$ (as well as $f_\nu$, if existing) can be recovered from $\Phi_\nu$ (see also Appendix \ref{AppInvCF}). According to the product rule (or convolution theorem; compare. Theorem 3.3.1 in \cite{Lukacs1970}), $\Phi_{\nu_1 \ast \nu_2} = \Phi_{\nu_1}\Phi_{\nu_2}$, for all $\nu_1, \nu_2 \in \mathcal{M}(\K, \mathcal{B}(\R))$. Analogously, for an arbitrary function  $q : \R \rightarrow \C$, the integral
\begin{align} \label{DefFT}
\mathcal{F}\curbr{ q }(t) := \intl_{-\infty}^\infty e^{itx} q(x) dx \hspace{1cm} (t \in \R)
\end{align}
is simply referred to as the Fourier transform. It converges absolutely and uniformly with respect to $t \in \R$, whenever $q \in L^1(\R)$. If $\nu \in \mathcal{M}(\K, \mathcal{B}(\R))$ is absolutely continuous with respect to the Lebesgue measure, then $\mathcal{F}\curbr{ f_\nu } = \Phi_\nu$. Specifically for $x \mapsto \ONE_{[a, b]}(x)$, with $a<b$, we write
\begin{align} \label{cFUab}
\phi_{a,b}(t) := \mathcal{F}\curbr{\ONE_{[a, b]}}(t) = \frac{ e^{itb} - e^{ita} }{it} \hspace{1cm} (t \in \R).
\end{align}
The importance of c.fs. essentially lies in their existence for any kind of distributions, together with their unique invertibility. By construction, the c.f. of an arbitrary pro\-ba\-bi\-lity measure $\nu$ satisfies $\Phi_\nu(0) = 1$, $0 \leq \abs{ \Phi_\nu } \leq 1$ and $\overline{ \Phi_\nu(t)} = \Phi_\nu(-t)$, for all $t \in \R$. It is real-valued if and only if it is even, i.e., if $\Phi_\nu(t) = \Phi_\nu(-t)$, for all $t \in \R$. This is equivalent to symmetry of $\nu$ with respect to the origin, i.e., $F_\nu(\xi-) = 1-F_\nu(-\xi)$, for all $\xi \in \R$. The set of zeros in $\overline \R$ of $\Phi_\nu$ is referred to as
\begin{align*}
\mathcal{N}_\nu := \curbr{t \in \overline \R : \Phi_\nu(t) = 0 }.
\end{align*}
Lastly, due to the Lebesgue decomposition theorem (Theorem 1.1.3 in \cite{Lukacs1970}), there always exist $a_1, a_2, a_3 \geq 0$, with $\sum_{j=1}^3 a_j = 1$, such that
\begin{align*}
\Phi_\nu = a_1 \Phi_{\nu_d} + a_2 \Phi_{\nu_a} + a_3 \Phi_{\nu_s},
\end{align*}
where $\nu_d$ is a discrete, $\nu_a$ is an absolutely continuous and $\nu_s$ is a continuously singular probability measure. In particular, $\Phi_\nu$ corresponds to a pure distribution if $\max_{1 \leq j \leq 3}a_j=1$, and else it is a mixture. The single addends essentially can be distinguished by their behaviour at infinity:
\begin{itemize}
\item The discrete part $\Phi_{\nu_d}$ is a Fourier series, whose coefficients are the atoms of $\nu_d$. It is almost periodic in the sense of Bohr (see \cite{Bohr1932}) and satisfies $\limsup_{t \rightarrow \pm\infty} \abs{\Phi_{\nu_d}(t)} = 1$.
\item The absolutely continuous part fulfills $\Phi_{\nu_a} = \mathcal{F}\curbr{f_{\nu_a}}$. Thus, the Riemann-Lebesgue lemma applies, viz $\lim_{ t \rightarrow \pm\infty } |\Phi_{\nu_a}(t)| = 0$.
\item Regarding the singular part, $ \limsup_{t\rightarrow \pm\infty} |\Phi_{\nu_s}(t)| \in [0, 1]$, the exact superior limit depending on the distribution. Particularly if the superior limit equals zero, i.e., if $\Phi_{\nu_s}(t)$ vanishes as $t\rightarrow\pm\infty$, this needs to happen slower than the decay of any function of the space $L^1(\R)$. Else, it would contradict the inversion formula for densities (e.g., Theorem 3.2.2 in \cite{Lukacs1970}).
\end{itemize}

\section{Symmetrization of the deconvolution problem} \label{SecSymDecProb}

Symmetry plays a key role in many mathematical fields. The most frequently encountered examples are principal value integrals, with the partial sum operator in Fourier analysis as a special integral of that kind. It is not difficult to verify the divergence of such integrals without symmetry (see $\S$2.3.2 in \cite{Pinsky2002}). The importance of some kind of symmetry in the context of deconvolution will turn out in this section. In before, we show that a probability measure always can be transformed to a probability measure with a symmetric c.f. that ranges the unit interval. Notice that a general symmetric c.f. is not necessarily non-negative. Consider, for instance, the uniform distribution on $[-1,1]$, with c.f. $t \mapsto t^{-1} \sin(t)$.

\begin{lemma}[symmetrization of probability measures] \label{LemSymErr}
To any probability measure $\mu$, it exists a probability measure $\eta$, such that $\bareps := \mu \ast \eta$ has the symmetric d.f. $F_\bareps = F_\mu \ast F_\eta$ and the symmetric c.f. $\Phi_\bareps = \Phi_\mu \Phi_\eta$, with $0 \leq \Phi_\bareps \leq 1$ and $\mathcal{N}_\bareps = \mathcal{N}_\mu$.
\end{lemma}

\begin{proof}
The conjugate measure of an arbitrary probability measure $\mu$ is defined by $\eta(A) := \mu(\curbr{-a : a \in A})$, for $A \in \mathcal{B}(\R)$, and obviously again constitutes a probability measure. It has the d.f. $F_\eta(\xi) = 1-F_\eta(-\xi-)$, for $\xi \in \R$, and the c.f. is $\Phi_\eta(t)=\Phi_\mu(-t)$, for $t \in \R$. Therefore, the convolution $\mu\ast\eta$ again represents a probability measure, and the associated d.f. is $F_{\mu\ast\eta}(\xi) = \int_{-\infty}^\infty F_\mu(\xi+z) F_\mu(dz)$, for $\xi\in\R$, whereas the c.f. fulfills
\begin{align*}
\Phi_{\mu\ast\eta}(t) = \Phi_\mu(t)\Phi_\mu(-t) = \Phi_\mu(t)\overline{ \Phi_\mu(t) } = \abs{\Phi_\mu(t)}^2 \hspace{1cm} (t\in\R).
\end{align*}
It shows that $0 \leq \Phi_{\mu\ast\eta} \leq 1$, which completes the proof.
\end{proof}

We refer to $\bareps$ as a symmetrization of $\mu$ and emphasize that the symmetrizing factor is not unique. Instead, convolution with the conjugate is just a generally applicable strategy. As a matter of fact, since $\Phi_{\delta_{\curbr{0}}}\equiv1$, we can always find c.fs. $\Phi_{\mu_1}$ and $\Phi_{\mu_2}$, with $0 \leq \Phi_{\mu_1} \leq 1$, such that $\Phi_\mu=\Phi_{\mu_1}\Phi_{\mu_2}$. If $\Phi_{\mu_1}$ and $\Phi_{\mu_2}$ both are not associated with point measures, $\Phi_\mu$ is called decomposable or divisible (compare $\S$5 in \cite{Lukacs1970}). In any case, to obtain a symmetrization of $\mu$, convolution with the conjugate of the pro\-ba\-bility measure $\mu_2$ suffices. Particularly if $\mu_2 = \delta_{\curbr{c_0}}$, for a fixed $c_0 \in \R$, symmetrization is equivalent to centering.
\\
\hspace*{1em}Now, in the situation of Lemma \ref{LemSymErr}, also $\chi\ast\bareps$ is a probability measure. The associated d.f. satisfies the identity
\begin{align} \label{2026062601}
F_{\chi\ast\bareps} = F_\chi \ast F_\bareps = F_\chi \ast F_\mu \ast F_\eta = F_{\chi \ast \mu} \ast F_\eta.
\end{align}
Furthermore,
\begin{align} \label{FTsymY}
\Phi_{\chi\ast\bareps} = \Phi_\chi \Phi_\bareps.
\end{align}
For $t \in \R \setminus \mathcal{N}_\mu$, we may divide by $\Phi_\bareps(t)$ and eventually perform a geometric series expansion, since then $\abs{ 1- \Phi_\bareps(t) }< 1$, to arrive at a local representation for $\Phi_\chi(t)$. In order to be able to establish this observation in a definite theorem, in terms of the signed measure
\begin{align} \label{2026070401}
\pi_\bareps := \delta_{\curbr{0}}-\bareps,
\end{align}
we first introduce some additional definitions.

\begin{definition} [deconvolution kernel and function] \label{DefDm}
For $(\xi, m) \in \R \times \N_0$, we refer to
\begin{align} \label{Dekmkompakt}
\mathfrak{K}_\bareps(\xi, m) := \suml_{\ell=0}^m F_{\pi_\bareps}^{\ast \ell} (\xi)
\end{align}
as the deconvolution kernel, and the deconvolution function is given by
\begin{align} \label{2024120702}
\mathfrak{D}(\xi, m) := (F_{\chi \ast \bareps} \ast \mathfrak{K}_\bareps(\cdot, m)) (\xi).
\end{align}
\end{definition}

Specifically the deconvolution kernel, according to (\ref{Dekmkompakt}), represents a Neumann partial sum. Being composed of convolution powers, it slightly resembles a renewal function or a renewal measure from renewal theory, except for the fact that $F_{\pi_\bareps}$ is a signed d.f. rather than non-negative. In view of the binomial convolution identity (\ref{BinConvId}), corresponding convolution powers can be cast in the form
\begin{align} \label{Dekm}
F_{\pi_\bareps}^{\ast \ell} (\xi) = \suml_{k=0}^\ell \binom{\ell}{k} (-1)^k F_\bareps^{\ast k}(\xi) \hspace{1cm} ((\ell,\xi) \in \N_0 \times \R).
\end{align}
It shows that $F_{\pi_\bareps}^{\ast \ell}$ is the binomial transform (compare, e.g., exercise 36, p. 136 \cite{Knuth1998}) of the sequence of d.fs. $( F_\bareps^{\ast k} )_{k \in \N_0}$. Moreover, also $\mathfrak{K}_\bareps(\cdot, m)$ and $\mathfrak{D}(\cdot, m)$ are associated with signed measures, for any $m \in \N_0$, except $\mathfrak{K}_\bareps(\cdot, 0) = \IndNr{0 \, \leq \, \cdot}$ and $\mathfrak{D}(\cdot, 0) = F_{\chi \ast \bareps}$. Neumann sums are of frequent occurence in functional analysis, especially in the context of integral equations, where they are closely related to the so-called resolvent. Actually, also the deconvolution kernel can be put in such a context. For this, given $F_{\chi\ast\mu}$ and $F_\mu$, observe that the identity (\ref{Verteilungsfaltung}) can be conceived as a first kind integral equation for the unknown d.f. $F_\chi$. Straightforward manipulations lead to the second kind integral equation
\begin{align*}
F_\chi = F_{\chi \ast \bareps} + F_{\pi_\bareps} \ast F_\chi,
\end{align*}
which bears the advantage that $F_\chi$ can be approximated by means of Picard's ite\-ra\-tion. One can show that this approximation is exactly the deconvolution function $\mathfrak{D}(\cdot, m)$. In \cite{kaiser2025deconvolutiondistributionfunctionsintegral}, we discussed further transformations of the convolution identity (\ref{Verteilungsfaltung}) and properties of obtainable approximations in the domain of d.fs.. Here, we completely focus on the Fourier domain and therefore proceed with the derivation of the Fourier-Stieltjes transforms associated with the above functions. First, we introduce the $m$-power, that is
\begin{align} \label{mpwr}
\mathcal{P}_\bareps(t,m) := \rb{1-\Phi_\bareps(t)}^{m+1} \hspace{1cm} (t \in \R, ~m \geq 0).
\end{align}
Notice, according to the binomial theorem, for any $t_0 \in \mathcal{N}_\mu$, that
\begin{align} \label{2024122301}
\mathcal{P}_\bareps(t,m) = 1 + \LandauO\curbr{ \Phi_\bareps(t) } \hspace{1cm} (t \rightarrow t_0).
\end{align}
Moreover, denoting the Fourier-Stieltjes transform of the deconvolution kernel by
\begin{align} \label{GeomPartialSummenFkt01A}
\mathcal{G}_\bareps(t,m) := \intl_{-\infty}^\infty e^{itz} \mathfrak{K}_\bareps(dz, m) \hspace{1cm} ( (t, m) \in \R\times \N_0),
\end{align}
with the aid of (\ref{Dekmkompakt}) and (\ref{Dekm}), through a simple application of the binomial and geometric sum formulae, it is easy to show that
\begin{subequations} \label{GeomPartialSummenFkt01}
\begin{align} \label{GeomPartialSummenFkt01B}
\mathcal{G}_\bareps(t,m) &= \suml_{\ell=0}^m \rb{ 1 - \Phi_\bareps(t)}^\ell
\\ \label{GeomPartialSummenFkt02}
&=
\begin{cases}
\frac{1-\mathcal{P}_\bareps(t,m)}{\Phi_\bareps(t)}, & \mbox{for } t \in\R\setminus \mathcal{N}_\mu, \\
m+1, & \mbox{for } t \in \mathcal{N}_\mu.
\end{cases}
\end{align}
\end{subequations}
We refer to $\mathcal{G}_\bareps(t,m)$ as the geometric sum function. Informally speaking, it constitutes the $m$-th partial sum of a geometric series, with $\lim_{m\rightarrow\infty} \mathcal{G}_\bareps(t,m) = \curbr{ \Phi_\bareps(t) }^{-1}$, for $t\in\R\setminus \mathcal{N}_\mu$. Thereof, since $0\leq\Phi_\bareps\leq1$, we conclude that this limit never constitutes a c.f., unless $\Phi_\bareps \equiv 1$. Finally, we denote the Fourier-Stieltjes transform associated with the deconvolution function by
\begin{align} \label{FTDm1A}
\Phi_\mathfrak{D}(t,m) := \intl_{-\infty}^\infty e^{itx} \mathfrak{D}(dx, m) \hspace{1cm} ( (t, m) \in \R \times \N_0 ).
\end{align}
Then, in view of (\ref{2024120702}) and (\ref{GeomPartialSummenFkt01}), for $(t, m) \in \R \times \N_0$, we compute
\begin{subequations} \label{FTDm1}
\begin{align} \label{FTDm1B}
\Phi_\mathfrak{D}(t,m) &= \Phi_{\chi \ast \bareps}(t) \mathcal{G}_\bareps(t,m)
\\ \label{FTDm2}
&= \Phi_\chi(t)\curbr{1-\mathcal{P}_\bareps(t,m)}.
\end{align}
\end{subequations}
Our first theorem summarizes a few convergence properties of $\Phi_\mathfrak{D}(\cdot, m)$ and is thus an early indicator for the importance of the deconvolution function.

\begin{theorem}[properties of $\Phi_\mathfrak{D}$] \label{Theo:DeccF}
The Fourier-Stieltjes transform of $\mathfrak{D}(\cdot,m)$ fulfills
\begin{align} \label{SchrankeFTDm}
\norm{ \Phi_\mathfrak{D}(\cdot, m) }_\infty \leq 1 \hspace{1cm} (m \geq 0),
\end{align}
and it exhibits the following convergence behaviour:
\begin{enumerate}[label=(\arabic*),ref=\thelemma~(\arabic*)]
\item\label{Theo:DeccF:1} If $t \in \R\setminus \mathcal{N}_\mu$ or $t \in \mathcal{N}_\mu \cap \mathcal{N}_\chi$, we have
\begin{align*}
\liml_{m \rightarrow \infty} \abs{ \Phi_\mathfrak{D}(t,m) - \Phi_\chi(t) } = 0.
\end{align*}
The convergence is uniform on any compact interval $I\subset\R$ with $I\cap \mathcal{N}_\mu \subseteq \mathcal{N}_\chi$.
\item\label{Theo:DeccF:2} Provided $\mathcal{N}_\mu \subseteq \mathcal{N}_\chi$ and $\pm\infty \in \mathcal{N}_\chi$, then
\begin{align*}
\liml_{m \rightarrow \infty} \norm{ \Phi_\mathfrak{D}(\cdot,m) - \Phi_\chi }_\infty = 0.
\end{align*}
\end{enumerate}
\end{theorem}

The uniform convergence on the whole real axis is non-trivial and will only occur if the sequence $\norm{ \Phi_\mathfrak{D}(\cdot,m) - \Phi_\chi }_\infty$ is bounded away from unity, for all sufficiently large $m$. Moreover, the characterization of this type of convergence by the above theorem is incomplete. For example, consider the c.f. $\Phi_\bareps(t) := \frac{1}{2} \curbr{ \cos(t) }^2 + \frac{1}{2} \exp\curbr{-t^2}$, associated with a mixture distribution. Then, $\mathcal{N}_\mu = \emptyset$. Besides, for $k \in \N_0$ and $t_k := (2k+1)\frac{\pi}{2}$, we have $\mathcal{P}_\bareps(t_k,m) = (1-\frac{1}{2} \exp\curbr{-t_k^2})^{m+1} \rightarrow 1$, as $k\rightarrow \infty$. Hence, $\norm{ \mathcal{P}_\bareps(\cdot, m) }_\infty = 1$, for $m \geq 0$. However, if additionally $\Phi_\chi(t) := \cos(t)$, then $\Phi_\mathfrak{D}(t_k,m) = \Phi_\chi(t_k) = 0$, for each $k \in \N_0$, and still $\norm{ \Phi_\mathfrak{D}(\cdot,m) - \Phi_\chi }_\infty \rightarrow 0$, as $m \rightarrow \infty$.

\begin{proof}[Proof of Theorem \ref{Theo:DeccF}]
The uniform boundedness (\ref{SchrankeFTDm}) is an immediate consequence of the representation (\ref{FTDm2}), since $\Phi_\chi$ and $\Phi_\bareps$ are also uniformly bounded. According to this representation, we also get
\begin{align} \label{2024120712}
\Phi_\chi(t) - \Phi_\mathfrak{D}(t,m) = \Phi_\chi(t) \mathcal{P}_\bareps(t,m) \hspace{1cm} (t \in \R, ~ m \geq 0).
\end{align}
Therefore, $\abs{ \Phi_\chi(t) - \Phi_\mathfrak{D}(t,m)} < 1$, for $t \in \R\setminus \mathcal{N}_\mu$, and the modulus equals zero if even $t \in \mathcal{N}_\chi$. The monotonicity of $\mathcal{P}_\bareps(t,m)$ with respect to $m\geq0$, for $t \in\R \setminus \mathcal{N}_\mu$, thus implies the asserted pointwise convergence. The uniformity on any compact subset is then merely a consequence of Dini's theorem, by continuity of (\ref{2024120712}) and by continuity of the limit function, valid under the assumption $\Phi_\chi(t) = 0$, for every $t\in I\cap \mathcal{N}_\mu$. To eventually verify Theorem \ref{Theo:DeccF:2} we note, since $\lim_{\abs{t}\rightarrow\infty}\Phi_\chi(t) = 0$, for any $\delta>0$, that there exists $R>0$ with
\begin{align*}
\supl_{\abs{t}>R} \abs{ \Phi_\chi(t) \mathcal{P}_\bareps(t,m) } \leq \supl_{\abs{t}>R} \abs{ \Phi_\chi(t) } < \delta,
\end{align*}
for all $m \geq 0$. In view of Theorem \ref{Theo:DeccF:1}, however, the convergence on $[-R,R]$ is uniform.
\end{proof}

Owing to the fact that $\mathfrak{D}(\cdot, m)$ is not associated with a non-negative measure, the continuity theorem for c.fs. (Theorem 3.6.1 in \cite{Lukacs1970}) is inapplicable, and Theorem \ref{Theo:DeccF} does not imply the convergence to $F_\chi$, i.e., weak convergence. To verify this convergence for a large class of distributions, and thereby justify the applicability of the deconvolution function for the reconstruction of $F_\chi$, will be the subject of \S\ref{ChConvDecFct}. In before, we present further supplementary results.

\section{Elementary properties of the deconvolution function} \label{SubSecDekma}

Since the Dirac measure with mass at the origin is associated with the neutral element of convolution, it is obvious that $\chi \ast \bareps = \chi$ if and only if $\bareps = \delta_{ \curbr{0} }$. Conversely, $\chi \ast \bareps \neq \chi$, whenever $\bareps \neq \delta_{ \curbr{0} }$. Therefore, the signed measure $\pi_\bareps$, compare (\ref{2026070401}), characterizes the deviation of $\bareps$ from $\delta_{ \curbr{0} }$. With regard to the associated d.f. $F_{\pi_\bareps}$, the binomial identity (\ref{Dekm}) shows that arbitrary convolution powers satisfy
\begin{align} \label{DekMa03}
\liml_{\xi \rightarrow \pm\infty} F_{\pi_\bareps}^{\ast \ell}(\xi) = 0 \hspace{1cm} (\ell \in \N).
\end{align}
As an immediate consequence, deconvolution kernel and function both asymptotically exhibit the typical behaviour of a d.f., viz
\begin{align} \label{DekMa06}
\liml_{\xi \rightarrow \xi_0} \mathfrak{K}_\bareps(\xi, m) = \liml_{\xi\rightarrow \xi_0} \mathfrak{D}(\xi, m) =
\begin{cases}
0, & \mbox{if } \xi_0 = -\infty, \\
1, & \mbox{if } \xi_0 = \infty.
\end{cases}
\end{align}
Definition \ref{DefDm} moreover suggests that the deconvolution kernel $\mathfrak{K}_\bareps(\xi, m)$ is rarely con\-ti\-nuous at $\xi = 0$, due to the presence of the Dirac d.f. $\IndNr{0 \, \leq \, \xi}$ in at least one summand, for each $m \in \N_0$. The graphs in Figure \ref{deconvolution_kernel.fig} support this guess. These also illustrate the symmetry with respect to $\xi$ and the unbounded behaviour of $\mathfrak{K}_\bareps(\xi, m)$ as $m$ increases, which is the subject of our next lemma.

\begin{lemma} \label{Lemma2026062601}
If $\bareps$ is continuous, for all $\xi \in \R$, it holds that
\begin{align} \label{DekMa05}
F_{\pi_\bareps}^{\ast \ell} (\xi) = - F_{\pi_\bareps}^{\ast \ell}(-\xi) \ONE_{\R \setminus \curbr{0}}(\xi) + \frac{1}{2} \IndNr{0}(\xi) \hspace{1cm} (\ell \in \N),
\end{align}
as well as that
\begin{align} \label{DekMa07}
\mathfrak{K}_\bareps(\xi, m) = (1 - \mathfrak{K}_\bareps(-\xi, m) ) \ONE_{\R \setminus \curbr{0}}(\xi) + \frac{m+2}{2} \IndNr{0}(\xi) \hspace{1cm} (m \in \N_0).
\end{align}
\end{lemma}

\begin{proof}
By continuity and symmetry of $\bareps$, we first observe that $F_\bareps^{\ast k}(\xi) = 1- F_\bareps^{\ast k}(-\xi)$ and $F_\bareps^{\ast k}(0) = \frac{1}{2}$, for all $(k, \xi) \in \N \times \R\setminus\curbr{0}$, whereas $F_\bareps^{\ast 0}(\xi) = 1 - F_\bareps^{\ast 0}(-\xi)$ and $F_\bareps^{\ast 0}(0) = 1$. The identity (\ref{DekMa05}) is thus a consequence of the expansion (\ref{Dekm}). Thereof, with regard to (\ref{DekMa07}), for $(\xi, m) \in \R \setminus \curbr{0} \times \N_0$, we get $\mathfrak{K}_\bareps(\xi, m) = 1 - \IndNr{0 \, \leq \, -\xi} - \sum_{\ell=1}^m F_{\pi_\bareps}^{\ast \ell} (-\xi)$ and $\mathfrak{K}_\bareps(0, m) = \frac{m+2}{2}$, which completes the proof.
\end{proof}

\begin{figure}
\centering
\resizebox*{\textwidth}{!}{\includegraphics{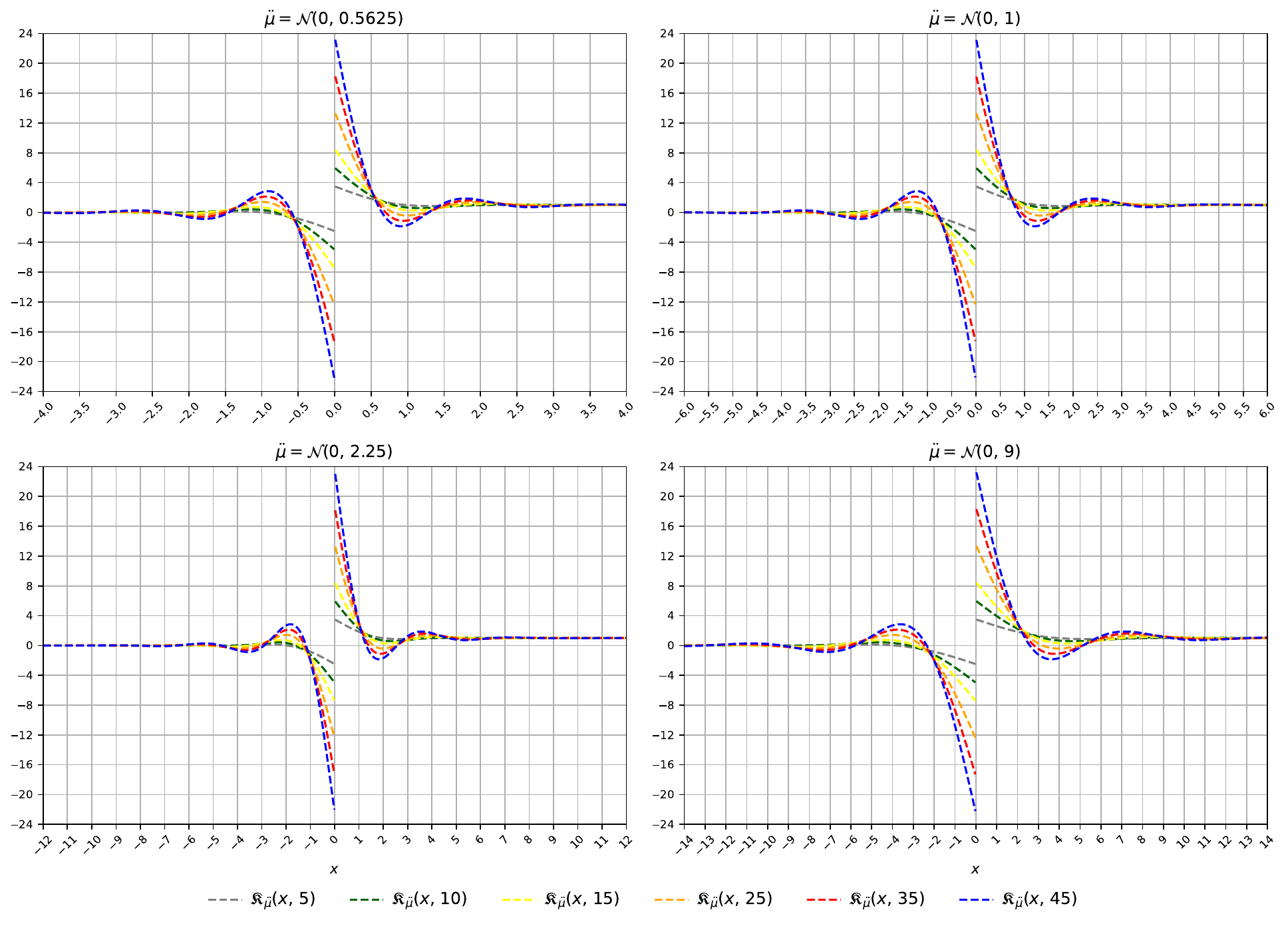}}
\vspace{-0.5cm}
\caption{Plots for deconvolution kernels $\mathfrak{K}_\bareps(\cdot, m)$, where $\bareps$ corresponds to various Gauss distributions with increasing variances. Notice the discontinuity at $\xi=0$ and the growth there.} \label{deconvolution_kernel.fig}
\end{figure}

By additional convolution with $\bareps$, the deconvolution kernel simplifies analogous to the well-known geo\-me\-tric identity $q\sum_{\ell=0}^m (1-q)^\ell = 1-(1-q)^{m+1}$, for $|q|<1$.

\begin{lemma} \label{BasicPropDecfct}
We have
\begin{align*}
F_\bareps \ast \mathfrak{K}_\bareps(\cdot, m) = \IndNr{0 \, \leq \, \cdot} - F_{\pi_\bareps}^{\ast(m+1)} \hspace{1cm} (m \in \N_0).
\end{align*}
\end{lemma}

\begin{proof}
The asserted identity becomes obvious upon writing
\begin{align*}
\mathfrak{K}_\bareps(\cdot, m) \ast F_\bareps = \mathfrak{K}_\bareps(\cdot, m) \ast ( \IndNr{0 \, \leq \, \cdot} - F_{\pi_\bareps} ),
\end{align*}
and representing the right hand side in terms of the Neumann sum (\ref{Dekmkompakt}).
\end{proof}

Observe that $\mathfrak{K}_\bareps(\cdot, m) \ast F_\bareps$ is continuous, if $\bareps$ is continuous. In particular, $\mathfrak{K}_\bareps(\cdot, m) \ast F_\bareps = \mathfrak{D}(\cdot, m)$, if $\chi=\delta_{ \curbr{0} }$, according to (\ref{2024120702}). This situation deserves a special emphasis, since the deconvolution function then corresponds to an approximate identity, i.e., it represents an approximation for the identity of the convolution of d.fs.. Figure \ref{deconvolution_binomial.fig} visualizes such a setup and indicates some kind of convergence, to be verified in a later section. Inte\-res\-tingly, the approximating functions $\mathfrak{D}(\xi, m)$ in these examples are continuous with respect to $\xi\in\R$, whereas $F_\chi(\xi)=\ONE_{\curbr{0 \leq \xi}}$ has a jump at $\xi=0$. The obvious question is therefore, what happens at $\xi=0$, as $m \rightarrow \infty$. Now, if $\bareps$ is an arbitrary continuous symmetric probability measure, we know from Lemma \ref{BasicPropDecfct} and (\ref{DekMa05}) that
\begin{align} \label{DekMa09}
(F_\bareps \ast \mathfrak{K}_\bareps(\cdot, m)) (0) = \IndNr{0 \, \leq \, 0} - F_{\pi_\bareps}^{\ast (m+1)} (0) = \frac{1}{2} \hspace{1cm} (m\in\N_0).
\end{align}
Hence, if $\chi =  \delta_{ \curbr{0} }$, it turns out that
\begin{align} \label{DekMa010}
\liml_{m \rightarrow \infty} \mathfrak{D}(0,m) = \frac{1}{2} \neq 1 = F_\chi(0).
\end{align}
The value $\mathfrak{D}(0,m) = \frac{1}{2}$ is simply the mean of the left- and right-sided limits of $F_\chi(\xi)$ at the discontinuity $\xi=0$. Such a behaviour is very common in the inversion of integral transforms (see \S\ref{SecFourierIntDecFct} and Appendix \ref{AppInvCF}). In summary, we have seen that discontinuities of the target d.f. $F_\chi$ require special care. The previous example has furthermore shown that the properties of the deconvolution function $\mathfrak{D}(\cdot, m)$ substantially differ from those of the kernel, due to the additional convolution with $F_{\chi\ast\bareps}$. In particular, $\mathfrak{D}(\cdot, m)$ thereby inherits continuity properties, if $\chi$ or $\bareps$ is continuous. It is even absolutely continuous, if $\chi$ or $\bareps$ is absolutely continuous. We then refer to
\begin{align} \label{DecDens1}
\mathfrak{d}(\xi,m) := \intl_{-\infty}^\infty f_{\chi \ast \bareps}(\xi-x) \mathfrak{K}_\bareps(dx, m) \hspace{1cm} ( (\xi, m) \in \R \times \N_0 )
\end{align}
as the deconvolution density, and the following statement holds.

\begin{lemma} \label{LemBinDensTrans}
If $F_\chi$ or $F_\bareps$ is absolutely continuous, then $\mathfrak{D}(\xi,m)$ is differentiable at Lebesgue almost every $\xi \in \R$, with derivative $\mathfrak{D}'(\xi, m) = \mathfrak{d}(\xi,m)$, for any $m \in \N_0$. In particular, $\int_{-\infty}^\xi \mathfrak{d}(x,m) dx = \int_{-\infty}^\xi \mathfrak{D}(dx, m)$, for all $\xi \in \R$, and $\int_{-\infty}^\infty \mathfrak{d}(x,m) dx = 1$.
\end{lemma}

\begin{proof}
In the described situation, it follows by construction and from Theorem 3.3.2 in \cite{Lukacs1970}, that $\mathfrak{D}(\cdot,m)$ is absolutely continuous. Therefore, the lemma is a simple consequence of the Lebesgue differentiation theorem.
\end{proof}

The deconvolution density can be considered as an approximation for $f_\chi$, if ex\-is\-ting. Yet, since $\mathfrak{D}(\cdot, m)$ is associated with a signed measure, $\mathfrak{d}(\cdot,m)$ does not constitute a probability density, i.e., it is not non-negative. We conclude this section by showing that any finite moment of the deconvolution function, denoted by
\begin{align*}
\Mu_\mathfrak{D}(k,m) := \intl_{-\infty}^\infty x^k \mathfrak{D}(dx,m) \hspace{1cm} ( (k, m) \in \N_0^2 ),
\end{align*}
matches the corresponding moment of $F_\chi$, as $m\rightarrow\infty$.

\begin{theorem}[moments of $\mathfrak{D}(\cdot, m)$] \label{SectMomDecfct}
Suppose the existence of $K_\chi \in \N_0$ and $K_\bareps \in \N \setminus \curbr{1}$, such that $F_\chi$ and $F_\bareps$ have moments $\Mu_\chi(j)$ and $\Mu_\bareps(k)$, for every $0 \leq j \leq K_\chi$ and $0 \leq k \leq K_\bareps$. Define $K_0 := \min\rrb{K_\chi, K_\bareps}$. Then, for all $0 \leq k < \min\curbr{K_0+1, 2(m+1)}$,
\begin{align} \label{MomDecFct7}
\Mu_\mathfrak{D}(k,m) = \Mu_\chi(k).
\end{align}
In particular, $\Mu_\mathfrak{D}(2(m+1),m) \neq \Mu_\chi(2(m+1))$, if $2(m+1) \leq K_0$.
\end{theorem}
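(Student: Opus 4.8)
The plan is to compute the moments of the deconvolution function through its Fourier-Stieltjes transform $\Phi_\mathfrak{D}(t,m)$, exploiting the product structure $\Phi_\mathfrak{D}(t,m) = \Phi_X(t)\{1-\mathcal{P}_\bareps(t,m)\}$ from (\ref{FTDm2}). Recall that moments of a distribution correspond (up to factors of $i$) to derivatives of the characteristic function at the origin, via $\Mu_\mathfrak{D}(k,m) = i^{-k}\,\Phi_\mathfrak{D}^{(k)}(0,m)$, and analogously $\Mu_X(k) = i^{-k}\,\Phi_X^{(k)}(0)$. The existence of the relevant moments of $X$ and $\bareps$ guarantees that $\Phi_X$ is $K_{F_X}$-times and $\Phi_\bareps$ is $K_{F_\bareps}$-times differentiable at $0$. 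So the whole statement reduces to analysing the Taylor expansion at $t=0$ of the factor $1-\mathcal{P}_\bareps(t,m) = 1-(1-\Phi_\bareps(t))^{m+1}$.

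First I would Taylor-expand $1-\Phi_\bareps(t)$ around $t=0$. Since $\Phi_\bareps(0)=1$ and $\bareps$ is symmetric with $\Mu_\bareps(1)=0$, the leading behaviour is $1-\Phi_\bareps(t) = -\tfrac{1}{2}\Mu_\bareps(2)\,t^2 + o(t^2)$ — that is, $1-\Phi_\bareps(t)$ vanishes to order exactly $2$ at the origin (using $K_{F_\bareps}\neq 1$, so the second moment genuinely exists and the expansion is honest). Raising to the power $m+1$, the quantity $\mathcal{P}_\bareps(t,m) = (1-\Phi_\bareps(t))^{m+1}$ vanishes to order $2(m+1)$ at $t=0$. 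Consequently $1-\mathcal{P}_\bareps(t,m) = 1 + O(t^{2(m+1)})$, meaning its first $2(m+1)-1$ derivatives at $0$ agree with those of the constant function $1$.

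The key step is then a Leibniz-rule argument on the product. For $k < 2(m+1)$, differentiating $\Phi_\mathfrak{D}(t,m) = \Phi_X(t)\cdot\{1-\mathcal{P}_\bareps(t,m)\}$ exactly $k$ times and evaluating at $t=0$: every term in the Leibniz sum that places at least one derivative on the factor $1-\mathcal{P}_\bareps$ forces a derivative of order at most $k < 2(m+1)$ on that factor, which vanishes at the origin; the only surviving term carries all derivatives on $\Phi_X$ and is multiplied by $\{1-\mathcal{P}_\bareps(0,m)\}=1$. This yields $\Phi_\mathfrak{D}^{(k)}(0,m)=\Phi_X^{(k)}(0)$, hence $\Mu_\mathfrak{D}(k,m)=\Mu_X(k)$, for all $k<\min\{K_0+1,2(m+1)\}$ — the bound $K_0+1$ entering simply because $\Phi_X$ is only differentiable up to order $K_{F_X}\geq K_0$ and $\Phi_\bareps$ up to $K_{F_\bareps}\geq K_0$. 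For the final assertion, at $k=2(m+1)\leq K_0$ the lowest-order correction becomes visible: the leading term of $\mathcal{P}_\bareps(t,m)$ is $(-\tfrac{1}{2}\Mu_\bareps(2))^{m+1}t^{2(m+1)}$, which is nonzero (as $\Mu_\bareps(2)>0$ for a non-degenerate symmetrization), producing a strictly nonzero discrepancy $\Mu_\mathfrak{D}(2(m+1),m)-\Mu_X(2(m+1))$.

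The main obstacle I anticipate is not conceptual but a matter of bookkeeping: justifying that differentiation under the integral sign is legitimate up to the required order, i.e. converting "moments exist" into "$\Phi$ is differentiable with $\Phi^{(k)}(0)=i^k\Mu(k)$", and carefully tracking which of the two moment budgets ($K_{F_X}$ versus $K_{F_\bareps}$) is the binding constraint at each order. The overlap in the Leibniz sum needs the full $k$-th derivative of \emph{both} factors to exist, and since $1-\mathcal{P}_\bareps$ is built from $\Phi_\bareps$, its differentiability is capped at $K_{F_\bareps}\geq K_0$; pairing this against the cap $K_{F_X}\geq K_0$ on $\Phi_X$ is exactly why the threshold is $K_0+1$ rather than either individual bound. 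I would handle this by invoking the standard moment-differentiability correspondence (e.g., \cite[Theorem 2.3.1]{Lukacs1970}) once at the outset, after which the argument is purely the order-of-vanishing computation sketched above.
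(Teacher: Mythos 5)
Your proposal is correct and follows essentially the same route as the paper: both reduce the claim, via the moment--differentiability correspondence from \cite[Theorem 2.3.1]{Lukacs1970}, to the identity $\Mu_\mathfrak{D}(k,m) = \Mu_X(k) - i^{-k}\bigl[\tfrac{d^k}{dt^k}\Phi_X(t)\mathcal{P}_\bareps(t,m)\bigr]_{t=0}$ and then to the observation that $\mathcal{P}_\bareps(t,m)$ vanishes to order exactly $2(m+1)$ at the origin because the symmetry of $\bareps$ kills the first moment. The only cosmetic difference is that the paper extracts the factor $t^{2(m+1)}$ through the auxiliary function $\rho(t,m)$ and applies the product rule there, whereas you run Leibniz directly on $\Phi_X\cdot\{1-\mathcal{P}_\bareps\}$; the bookkeeping of the two moment budgets and the nonvanishing of the $k=2(m+1)$ correction (up to the implicit non-degeneracy $\Mu_\bareps(2)>0$, which you rightly flag) match the paper's argument.
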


\begin{proof}
For $0\leq k \leq K_0$ and $\ell\in\N_0$, appealing to the multinomial theorem, it is easy to verify that $\Mu_{ \bareps^{\ast\ell} }(k) < \infty$ and $\Mu_{\chi \ast \bareps^{\ast\ell} }(k) < \infty$, i.e., that $F_\bareps^{\ast \ell}$ and $F_\chi \ast F_\bareps^{\ast \ell}$ both have moments up to order $K_0$. Notice that $\Mu_{\chi \ast \bareps^{\ast0} }(k) = \Mu_\chi(k)$. Since $\Phi_\chi\Phi_\bareps^\ell$ constitutes the c.f. associated with $F_\chi \ast F_\bareps^{\ast \ell}$, Corollary 2 to Theorem 2.3.1 in \cite{Lukacs1970} tells us that this function may be differentiated $k$-times, with
\begin{align} \label{MomDecFct1}
\Mu_{\chi \ast \bareps^{\ast\ell} }(k) = i^{-k} \cb{\frac{d^k}{dt^k} \Phi_\chi(t) \curbr{\Phi_\bareps(t)}^\ell}_{t=0} \hspace{1cm} (0 \leq k \leq K_0).
\end{align}
In addition, under the current assumptions, also $\Mu_\mathfrak{D}(k,m)$, for $0 \leq k \leq K_0$, exists. Therefore, according to (\ref{2026062601}), (\ref{2024120702}), Lemma \ref{BasicPropDecfct} and (\ref{MomDecFct1}), in terms of the $m$-power (\ref{mpwr}), we get
\begin{align} \label{MomDecFct3}
\Mu_\mathfrak{D}(k,m) = \Mu_\chi(k) - i^{-k} \cb{\frac{d^k}{dt^k} \Phi_\chi(t) \mathcal{P}_\bareps(t,m)}_{t=0}.
\end{align}
Due to symmetry, any odd moment of $F_\bareps$ equals zero, so that, by Theorem 2.3.3 in \cite{Lukacs1970}, an expansion of the form
\begin{align} \label{MomDecFct4}
\Phi_\bareps(t) = 1 + \suml_{ j = 1 }^{\big\lfloor\frac{K_\bareps}{2}\big\rfloor} c_{2j} (it)^{2j} + o \rrb{ t^{K_\bareps} } \hspace{1cm} (t \rightarrow 0)
\end{align}
holds, where $c_{2j} := (2j!)^{-1} \Mu_\bareps(2j)$, and the sum is non-empty, by assumption. Now, introducing
\begin{align*}
\rho(t,m) := \Phi_\chi(t) \frac{\mathcal{P}_\bareps(t,m)}{t^{2(m+1)}},
\end{align*}
we recast (\ref{MomDecFct3}), to arrive at
\begin{align} \label{MomDecFct5A}
\Mu_\mathfrak{D}(k,m) = \Mu_\chi(k) - i^{-k} \cb{\frac{d^k}{dt^k} t^{2(m+1)}\rho(t,m) }_{t=0}.
\end{align}
The function $\rho(t,m)$ is $K_0$-times differentiable and, by (\ref{MomDecFct4}), it satisfies $\rho(0,m) = c_2^{m+1}$. Moreover, for $0\leq k \leq \min\rrb{K_0,2(m+1)}$, it is obvious from the product rule of differentiation, as $t \rightarrow 0$, that
\begin{align} \label{MomDecFct6}
\frac{d^k}{dt^k} t^{2(m+1)}\rho(t,m) &= \frac{(2(m+1))!}{(2(m+1)-k)!} t^{2(m+1)-k}\rho(t,m) + o\rrb{t^{2(m+1)-k}}.
\end{align}
Accordingly, (\ref{MomDecFct5A}) cancels to (\ref{MomDecFct7}), for each $0 \leq k < \min\rrb{K_0+1,2(m+1)}$. Finally, if $2(m+1) \leq K_0$, from (\ref{MomDecFct5A}) and (\ref{MomDecFct6}), we deduce that
\begin{align*}
\Mu_\chi(2(m+1)) - \Mu_\mathfrak{D}(2(m+1),m) = i^{-2(m+1)} (2(m+1))! c_2^{m+1} \neq 0,
\end{align*}
which completes the proof.
\end{proof}

\begin{figure}
\centering
\resizebox*{\textwidth}{!}{\includegraphics{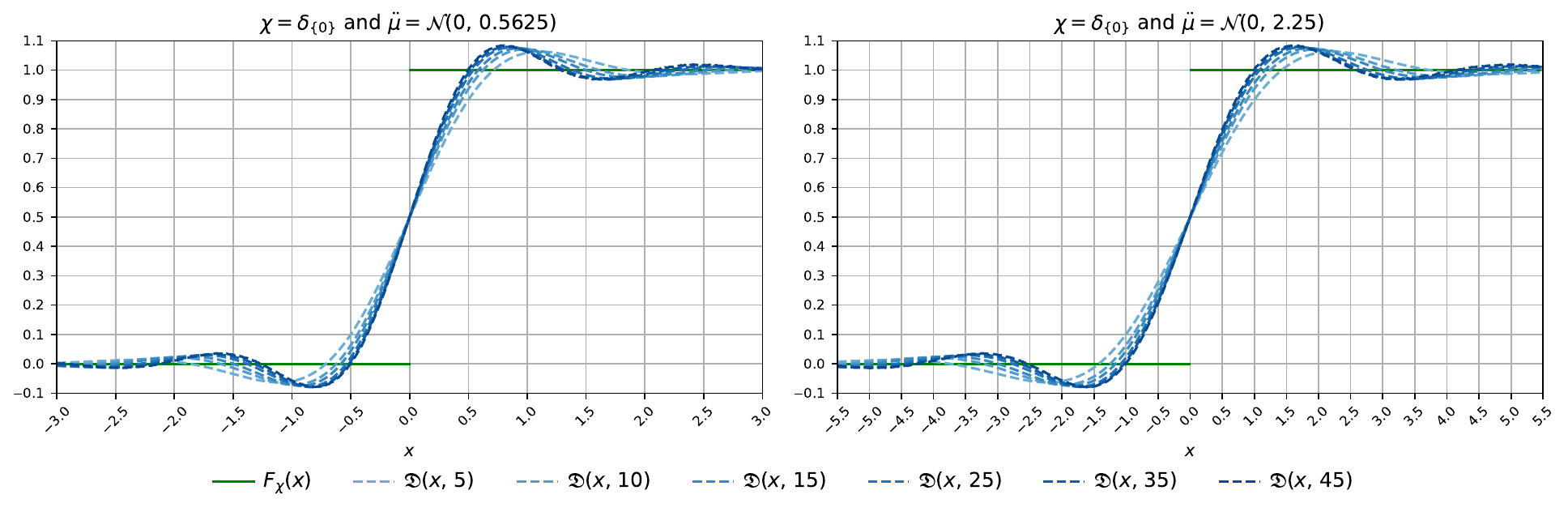}}
\vspace{-0.5cm}
\caption{Plots for the deconvolution function as an approximation for the identity of convolution. Notice that we approximate in each case a discontinuous function by a sequence of continuous functions.} \label{deconvolution_binomial.fig}
\end{figure}

\section{Integral representations for the deconvolution function} \label{SecFourierIntDecFct}

The representation of the deconvolution function as a Neumann sum is problematic for both, theoretical investigations and numerical evaluation. The combination of convolution powers and binomial coefficients, already for small numbers, inflicts computational inaccuracies and errors, as the limit of capacity is reached. As an alternative, in this section, we eventually invoke some representations as a Fourier-type integral, which basically follow from the inversion formulae from Appendix \ref{AppInvCF}.

\begin{lemma}[Fourier-type integral for $\mathfrak{D}(\cdot,m)$] \label{IntegraldarstDm}
For $\xi \in \R$, we have
\begin{align*}
\frac{\mathfrak{D}(\xi, m)+\mathfrak{D}(\xi-, m)}{2} = \frac{1}{2} + \liml_{ \substack{ T_1 \downarrow 0 \\ T_2 \uparrow \infty } } \intl_{T_1}^{T_2} \frac{e^{i\xi t} \Phi_{\chi \ast \bareps}(-t) - e^{-i\xi t} \Phi_{\chi \ast \bareps}(t)}{ i2\pi t} \mathcal{G}_\bareps(t,m) dt,
\end{align*}
the order of the limits being arbitrary.
\end{lemma}

\begin{proof}
By construction, the deconvolution function represents the d.f. of the signed measure defined by $\mu_\mathfrak{D}(E, m) := \int_E \mathfrak{D}(dx, m)$, for $E \in \mathcal{B}(\R)$. In particular, $\mu_\mathfrak{D}(\cdot, m) \in \mathcal{M}(\R, \mathcal{B}(\R))$. The associated Fourier-Stieltjes transform $\Phi_\mathfrak{D}(\cdot, m)$ was determined in (\ref{FTDm1}). Hence, the asserted representation is a direct consequence of the inversion formula from Theorem \ref{SatzInvHalbstr}, due to the evenness of $\mathcal{G}_\bareps(t, m)$.
\end{proof}

The identity from Lemma \ref{IntegraldarstDm} simplifies for continuity points $\xi \in \R$, since there $\mathfrak{D}(\xi, m)=\mathfrak{D}(\xi-, m)$. With regard to discontinuities, an inspection of the Neumann sum for $\mathfrak{D}(\cdot, m)$ shows that $D_{ \mathfrak{D}(\cdot, m) } \subset D_{ \mathfrak{D} }$, for each $m \in \N_0$, where
\begin{align} \label{UnstetMengDm}
D_{ \mathfrak{D} } := \rrb{ \xi \in \R : \mbox{ it exists } j\in \N \mbox{ with } (F_\chi \ast F_\bareps^{\ast j})\curbr{ \xi } > 0 }.
\end{align}
It can be a serious challenge to determine these discontinuities. In any case, $\mathfrak{D}(\cdot, m)$ is continuous on $\R \setminus D_{ \mathfrak{D} }$, for each $m \in\N_0$. Obviously, the most convenient situation corresponds to continuous $F_\bareps$ or $F_\chi$, in which $D_{ \mathfrak{D} } = \emptyset$. Now, an important means to quantify the deviation of $\mathfrak{D}(\cdot, m)$ from the target $F_\chi$ is the bias. The associated Fourier-type integral can be obtained similar to Lemma \ref{IntegraldarstDm}. To avoid the appearance of two limits, we require the following.

\begin{assumption} \label{Ass2026070901}
It exists $\tau > 0$, with
\begin{align*}
\intl_0^\tau \frac{\mathcal{P}_\bareps(t,0)}{t} dt < \infty.
\end{align*}
\end{assumption}

Since $\Phi_\bareps(0)=1$ always, it can be expected that Assumption \ref{Ass2026070901} holds for nume\-rous c.fs., yet, apparently it is not naturally fulfilled. We could not find a counterexample. A sufficient condition is that $\Phi_\bareps(t) = 1 + \LandauO(t^b)$, as $t \downarrow 0$, for $b>0$. Generally speaking, the behaviour of a c.f. near the origin depends on the tail behaviour of the d.f..

\begin{lemma}[Fourier-type integrals for the bias of $\mathfrak{D}(\cdot,m)$] \label{IntegraldarstBiasDm}
Under Assumption \ref{Ass2026070901}, the local bias at $\xi \in \R$ is given by
\begin{align} \label{BiasDekfkt}
\frac{\mathfrak{D}(\xi, m)+\mathfrak{D}(\xi-, m)}{2} - \frac{ F_\chi(\xi) + F_\chi(\xi-) }{2} &= \liml_{T \rightarrow \infty} \mathfrak{I}_T(m,\xi),
\end{align}
where we write
\begin{align} \label{PointwConv01b}
\mathfrak{I}_T(m,\xi) := \frac{1}{2\pi i} \intl_{-T}^T \frac{\mathcal{P}_\bareps(t,m)}{t}  e^{-i\xi t} \Phi_\chi(t) dt.
\end{align}
\end{lemma}

\begin{proof}
By definition of $\mathfrak{D}(\cdot, m)$, compare (\ref{2024120702}), and Lemma \ref{BasicPropDecfct}, for each $m \geq 0$, we can write $\mathfrak{D}(\cdot, m) - F_\chi = - F_\chi\ast F_{\pi_\bareps}^{\ast (m+1)}$. Apply the inversion formula from Theorem \ref{SatzInvHalbstr} to this d.f. and rearrange the integral, to arrive at (\ref{BiasDekfkt}).
\end{proof}

\begin{figure}
\centering
\resizebox*{\textwidth}{!}{\includegraphics{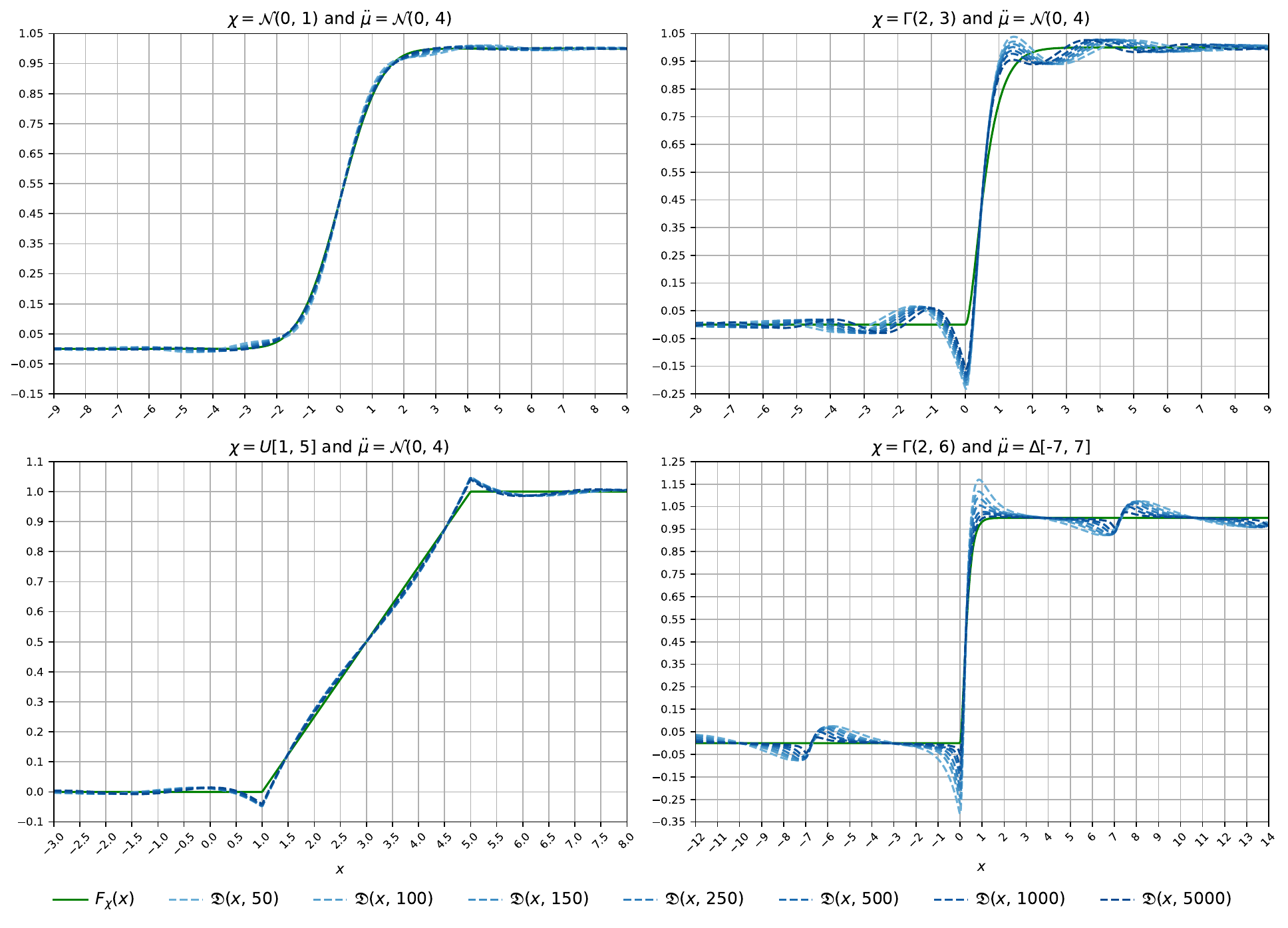}}
\vspace{-0.5cm}
\caption{The deconvolution function for various absolutely continuous distributions.} \label{df_deconvolution_fourier.fig}
\end{figure}

Figure \ref{df_deconvolution_fourier.fig} illustrates the deconvolution function for various absolutely continuous probability measures. These plots were created with the aid of the Fourier integral from Lemma \ref{IntegraldarstBiasDm}, which especially facilitates the evaluation for large values of $m$. Finally, a widely permissible representation for the deconvolution density as an integral of Fourier-type can not be established without use of an auxiliary function.

\begin{assumption}[smoothing kernel] \label{Ass2024121902}
$\iota : \mathcal{B}(\R) \rightarrow [0,1]$ is an absolutely continuous probability measure, with $f_\iota(\xi) = \LandauO\curbr{ \xi^{-2} }$, as $\xi \rightarrow \pm\infty$, and $\Phi_\iota \in L^1(\R)$.
\end{assumption}

The smoothing kernel is crucial for the transition from the domain of densities to Fourier transforms.

\begin{lemma}[Fourier-type integral for $\mathfrak{d}(\cdot,m)$] \label{LemFourIntDecDens}
Suppose that $\chi \ast \bareps$ is absolutely continuous, with continuous $f_{\chi \ast \bareps}$, and that Assumption \ref{Ass2024121902} holds. Then,
\begin{align*}
\mathfrak{d}(\xi,m) &= \frac{1}{2\pi} \liml_{\delta \downarrow 0} \intl_{-\infty}^\infty e^{-i\xi t} \Phi_\iota(\delta t) \Phi_\mathfrak{D}(t, m) dt \hspace{1cm} (\xi \in \R, ~ m \geq 0).
\end{align*}
\end{lemma}

Notice that the limit can be performed under the sign of integration, if $\Phi_\mathfrak{D}(\cdot, m) \in L^1(\R)$. Moreover, since $f_{\chi \ast \bareps} = f_\chi \ast f_\bareps$, it suffices that one of the components $f_\chi$ or $f_\bareps$ is continuous, in order to have continuity of $f_{\chi \ast \bareps}$.

\begin{proof}[Proof of Lemma \ref{LemFourIntDecDens}]
Under the current assumptions, in view of (\ref{DecDens1}), the deconvolution density exists and represents a continuous function $\mathfrak{d}(\cdot, m) \in L^1(\R)$. The associated Fourier transform is $\Phi_\mathfrak{D}(\cdot, m)$. The given formula is hence an immediate consequence of Theorem \ref{TheoInvDens}.
\end{proof}

In an analogous fashion, we eventually also represent the bias of the deconvolution density as a Fourier-type integral. For this, of course, the actual existence of the target density is necessary (recall that this is not required for the existence of $\mathfrak{d}(\cdot, m)$).

\begin{lemma}[Fourier-type integrals for the bias of $\mathfrak{d}(\cdot,m)$] \label{IntegraldarstBiasDecDens}
Suppose absolute continuity of $\chi$, continuity of $f_{\chi\ast\bareps}$ and validity of Assumption \ref{Ass2024121902}. Then,
\begin{align} \label{2024121504}
\mathfrak{d}(\xi,m) - \frac{ f_\chi(\xi+) + f_\chi(\xi-) }{2} &= \liml_{\delta \downarrow 0} \mathfrak{R}_\delta(m, \xi) \hspace{1cm} (\xi \in \R, ~ m \geq 0),
\end{align}
where we denote
\begin{align} \label{2024121505}
\mathfrak{R}_\delta(m, \xi) := -\frac{1}{2\pi} \intl_{-\infty}^\infty e^{-i\xi t} \Phi_\iota(\delta t) \Phi_\chi(t) \mathcal{P}_\bareps(t, m) dt.
\end{align}
\end{lemma}

\begin{proof}
In the present situation, by definition (\ref{DecDens1}) and Lemma \ref{BasicPropDecfct}, we can write $\mathfrak{d}(\xi, m) - f_\chi(\xi) = - \int_{-\infty}^\infty f_\chi(\xi-x) F_{\mu_\bareps}^{\ast (m+1)}(dx)$, for all $\xi \in \R$. The asserted formula thus follows from Theorem \ref{TheoInvDens}.
\end{proof}

The performance of the deconvolution density is illustrated in Figure \ref{density_deconvolution_fourier.fig}.

\begin{figure}
\centering
\resizebox*{\textwidth}{!}{\includegraphics{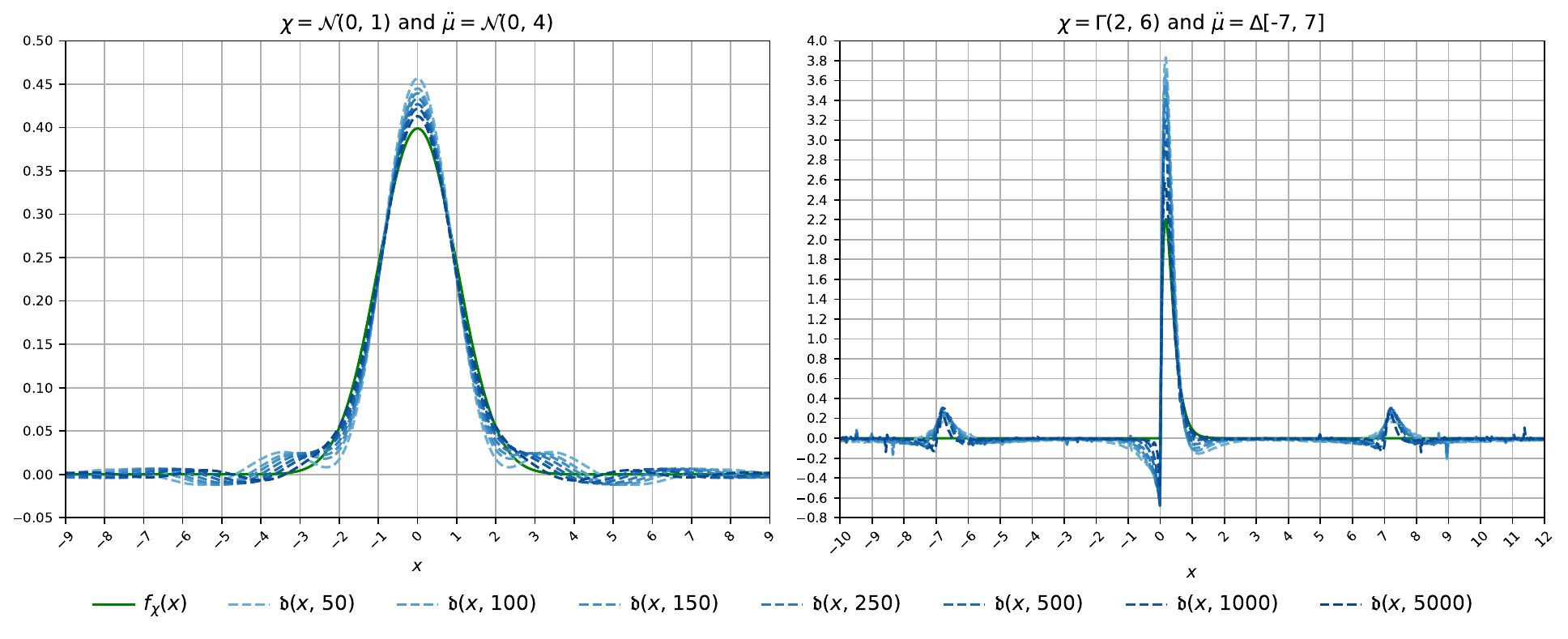}}
\vspace{-0.5cm}
\caption{The deconvolution density for various absolutely continuous distributions. In the right plot, the narrow character of the true density $f_\chi(x)$ causes serious inaccuracies in the approximation, as well as the discontinuity at $x=0$.} \label{density_deconvolution_fourier.fig}
\end{figure}

\section{Convergence of the deconvolution function} \label{ChConvDecFct}

We already mentioned, since the deconvolution function is associated with a signed measure, that the convergence of the corresponding Fourier-Stieltjes transform is insufficient to conclude the convergence of $\mathfrak{D}(\cdot,m)$ to $F_\chi$. Therefore, in the present paragraph we study the convergence behaviour of the deconvolution function and its possibly existing density, for which we deploy the Fourier integrals from \S\ref{SecFourierIntDecFct}. A first inspection of these integrals suggests that convergence properties essentially depend on the involved c.fs.. Roughly speaking, one can distinguish between the integral being absolutely and uniformly convergent with respect to $T> 0$ or existing merely as the limit of a sequence of integrals. In the first case, it is easy to verify a strong kind of convergence.

\begin{theorem}[uniform convergence of $\mathfrak{D}(\cdot, m)$] \label{SatzglmKvgzDm}
Assume validity of Assumption \ref{Ass2026070901} and the existence of $T > 0$, with
\begin{align} \label{IntCondPhiX}
\intl_T^\infty \frac{\abs{\Phi_\chi(t)}}{t} dt<\infty.
\end{align}
Then,
\begin{align*}
\liml_{m \rightarrow \infty} \norm{ \mathfrak{D}(\cdot, m) - F_\chi }_\infty \in [0,\infty).
\end{align*}
The limit equals zero, if $\mathcal{N}_\mu \setminus \mathcal{N}_\chi$ has zero Lebesgue measure.
\end{theorem}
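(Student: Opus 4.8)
The plan is to reduce the whole statement to the single-point bias integral of Corollary~\ref{IntegraldarstBiasDm} and to exploit that its dependence on $\xi$ enters only through the unimodular factor $e^{-i\xi t}$ in (\ref{PointwConv01b}); a single $L^1$-majorant will then control the bias simultaneously for all $\xi\in\R$ and all $m\ge 0$. The point I would settle \emph{first}, however, is that hypothesis (\ref{IntCondPhiX}) secretly forces $F_X$ to be continuous. By Wiener's theorem, $\suml_{a}\rb{F_X\curbr{a}}^2 = \liml_{S\to\infty}\tfrac{1}{2S}\intl_{-S}^S \abs{\Phi_X(t)}^2\,dt$. Were $F_X$ to carry an atom, the right-hand side would be some $\sigma^2>0$; since $\abs{\Phi_X}\le 1$ gives $\abs{\Phi_X}^2\le\abs{\Phi_X}$, one gets $\intl_0^S\abs{\Phi_X(t)}\,dt\ge\intl_0^S\abs{\Phi_X(t)}^2\,dt\gtrsim\sigma^2 S$ for large $S$, and an integration by parts then yields $\intl_T^\infty t^{-1}\abs{\Phi_X(t)}\,dt=\infty$, contradicting (\ref{IntCondPhiX}). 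Hence $F_X$ is atomless, whence $C_\mathfrak{D}=\R$ (remark preceding Theorem~\ref{IntegraldarstDm}), $\mathfrak{D}(\cdot,m)$ is continuous, and $\tfrac{1}{2}\curbr{F_X(\xi)+F_X(\xi-)}=F_X(\xi)$ for every $\xi$. Thus (\ref{BiasDekfkt}) holds at all points and the persistent half-jump that ruins convergence for atomic targets (cf.\ the example $F_X=\ONE_{\xi_0\le\cdot}$ in \S\ref{SubSecDekma}) is precisely excluded.

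Second, I would build the uniform majorant. Write $B_m(\xi):=\mathfrak{D}(\xi,m)-F_X(\xi)=\liml_{T\to\infty}\mathfrak{I}_T(m,\xi)$. Using the monotonicity $0\le\mathcal{P}_\bareps(t,m)\le\mathcal{P}_\bareps(t,0)=1-\Phi_\bareps(t)$ (valid as $0\le 1-\Phi_\bareps\le 1$), together with $\abs{\Phi_X}\le 1$ and the evenness of $\Phi_\bareps$, the integrand of (\ref{PointwConv01b}) is dominated by one fixed $G\in L^1(\R)$: near $t=0$ by $\tfrac{1}{2\pi}\,\abs{t}^{-1}\rb{1-\Phi_\bareps(t)}$, integrable by (\ref{IntCondPhiBareps}); for $\abs{t}\ge T$ by $\tfrac{1}{2\pi}\,\abs{t}^{-1}\abs{\Phi_X(t)}$, integrable by (\ref{IntCondPhiX}); and on the intervening compact annulus trivially. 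Dominated convergence in $T$ then upgrades the improper integral to an absolutely convergent one, $B_m(\xi)=\tfrac{1}{2\pi i}\intl_{-\infty}^\infty t^{-1}\mathcal{P}_\bareps(t,m)e^{-i\xi t}\Phi_X(t)\,dt$.

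Third, I would pass to the limit uniformly. Since $\mathcal{N}_\bareps=\mathcal{N}_\varepsilon$ (Lemma~\ref{LemSymErr}), one has $\mathcal{P}_\bareps(t,m)\downarrow\ONE_{\mathcal{N}_\varepsilon}(t)$ monotonically in $m$. Setting $B_\infty(\xi):=\tfrac{1}{2\pi i}\intl_{\mathcal{N}_\varepsilon} t^{-1}e^{-i\xi t}\Phi_X(t)\,dt$, the estimate $\supl_{\xi\in\R}\abs{B_m(\xi)-B_\infty(\xi)}\le\tfrac{1}{2\pi}\intl_{-\infty}^\infty\abs{t}^{-1}\rb{\mathcal{P}_\bareps(t,m)-\ONE_{\mathcal{N}_\varepsilon}(t)}\abs{\Phi_X(t)}\,dt$ has a right-hand side independent of $\xi$ that tends to $0$ by monotone (or dominated) convergence. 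Hence $B_m\to B_\infty$ uniformly, $\norm{B_\infty}_\infty\le\norm{G}_1<\infty$, and the reverse triangle inequality gives $\liml_{m\to\infty}\norm{B_m}_\infty=\norm{B_\infty}_\infty\in[0,\infty)$. If moreover $\mathcal{N}_\varepsilon\setminus\mathcal{N}_X$ is Lebesgue-null, then on $\mathcal{N}_\varepsilon\cap\mathcal{N}_X$ the factor $\Phi_X$ vanishes while the remaining part of $\mathcal{N}_\varepsilon$ is negligible, so $B_\infty\equiv 0$ and the limit is $0$.

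The main obstacle is the first step: the sup-norm claim is genuinely false for atomic $F_X$, because the midpoint value $\tfrac{1}{2}\curbr{F_X(\xi)+F_X(\xi-)}$ leaves in $B_m$ a jump of half the atom mass on sets of positive length that never disappears. Recognizing that (\ref{IntCondPhiX}) is exactly the hypothesis ruling this out, and proving it via the Wiener-type relation between $\intl t^{-1}\abs{\Phi_X}$ and the atoms of $F_X$, is the delicate part; once continuity of $F_X$ is in hand, the remainder is a routine uniform dominated-convergence argument enabled by the unimodularity of $e^{-i\xi t}$.
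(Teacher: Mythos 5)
Your proposal is correct and follows the same core route as the paper: reduce to the bias integral of Corollary~\ref{IntegraldarstBiasDm}, note that the $\xi$-dependence sits only in the unimodular factor $e^{-i\xi t}$, dominate the integrand uniformly in $m$ and $\xi$ by an $L^1$ majorant built from (\ref{IntCondPhiBareps}) near the origin and (\ref{IntCondPhiX}) at infinity, and let dominated convergence isolate the residual contribution supported on $\mathcal{N}_\varepsilon$. Two points where you deviate are worth recording. First, the paper obtains continuity of $F_X$ by asserting that (\ref{IntCondPhiX}) forces $\Phi_X(t)\rightarrow 0$ and citing \cite[Corollary 2 to Theorem 3.2.3]{Lukacs1970}; strictly speaking, $\intl_T^\infty t^{-1}\abs{\Phi_X(t)}\,dt<\infty$ does not imply pointwise decay of $\Phi_X$ (only decay of the Ces\`aro averages $S^{-1}\intl_0^S\abs{\Phi_X}$), so your Wiener-theorem argument --- atoms would force $\intl_0^S\abs{\Phi_X}^2\gtrsim \sigma^2 S$ and hence divergence of the logarithmic integral --- is actually the more watertight justification of the same intermediate fact. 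Second, by identifying the limit function $B_\infty$ explicitly and proving \emph{uniform} convergence $B_m\rightarrow B_\infty$, you obtain existence of $\liml_{m\rightarrow\infty}\norm{\mathfrak{D}(\cdot,m)-F_X}_\infty$ as an actual limit, whereas the paper's two-integral bound literally only controls the limit superior; your version therefore delivers the theorem's assertion in full. The vanishing of the limit when $\mathcal{N}_\varepsilon\setminus\mathcal{N}_X$ is Lebesgue-null is handled identically in both arguments.
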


Clearly, a necessary condition for the applicability of the last theorem is the continuity of $F_\chi$. The theorem reveals the effect of a c.f. $\Phi_\mu$ with a compact support. It is then not possible to recover the blurred d.f. $F_\chi$, unless also $\Phi_\chi$ has a compact support. More detailed statements on this case can be found below. Finally, it turns out through Theorem \ref{SatzglmKvgzDm} that the convergence to zero neither depends on the existence of any moments nor on the support of $F_\chi$, which is quite remarkable. For instance, suppose that $F_\bareps$ has moments up to order $K_\bareps$ and $F_\chi$ has moments up to order $K_\chi$, for $K_\bareps < K_\chi$. It then follows from the properties of convolution that $\mathfrak{D}(\cdot,m)$ has moments up to order $K_\bareps$. But if $\mathfrak{D}(\cdot,m)\rightarrow F_\chi$, as $m \rightarrow \infty$, in the limit, we return to a function with moments up to order $K_\chi$. Similarly, if $F_\chi$ has a finite support. In these circumstances, regardless of $F_\bareps$, the support of $\mathfrak{D}(\cdot,m)$ is either infinite or increases as $m$ increases, whereas the limit function, if convergence to zero holds, has a finite support.

\begin{proof}[Proof of Theorem \ref{SatzglmKvgzDm}]
The existence of (\ref{IntCondPhiX}) implies the decay of $\Phi_\chi(t)$, as $t \rightarrow \pm\infty$, so that $\chi$ must be associated with a continuous distribution, according to Corollary 2 to Theorem 3.2.3 in \cite{Lukacs1970}. But then $F_\chi\ast F_\bareps^{\ast j}$ is also continuous, for any $j \in \N_0$, so that $\mathfrak{D}(\xi, m) = \mathfrak{D}(\xi-, m)$ and $F_\chi(\xi) = F_\chi(\xi-)$, for all $\xi\in\R$. Hence, Lemma \ref{BiasDekfkt} immediately yields
\begin{align} \nonumber
\norm{ \mathfrak{D}(\cdot, m) - F_\chi }_\infty &\leq \frac{1}{\pi} \intl_{[0,\infty)\setminus \mathcal{N}_\mu} \frac{ \mathcal{P}_\bareps(t,m) }{t} \abs{\Phi_\chi(t)} dt + \frac{1}{\pi} \intl_{[0,\infty)\cap \mathcal{N}_\mu} \frac{ \abs{\Phi_\chi(t)} }{t} dt.
\end{align}
Observing that $0\leq \mathcal{P}_\bareps(t,m) \leq \mathcal{P}_\bareps(t,0)$, under Assumption \ref{Ass2026070901} and (\ref{IntCondPhiX}), it is clear that the first integrand is bounded by an integrable function, which does not depend on $m$. In addition, $\lim_{m \rightarrow \infty} \mathcal{P}_\bareps(t,m) = 0$, for all $t\in[0,\infty)\setminus \mathcal{N}_\mu$, so that Lebesgue's dominated convergence theorem implies the decay of this integral. Regarding the se\-cond integral, we note that $0 \notin \mathcal{N}_\mu$, since $\Phi_\bareps(0)=1$, and that $\Phi_\chi(t)$ is continuous along the real axis. Hence, the denominator in the integral is bounded away from zero and the integral is finite, due to (\ref{IntCondPhiX}). More precisely, it equals either zero or a finite positive constant, whose magnitude depends on $\mathcal{N}_\mu$ and on $\Phi_\chi$.
\end{proof}

The proof of an analogous result for the deconvolution density is straightforward.

\begin{corollary}[uniform convergence of $\mathfrak{d}(\cdot, m)$]
In the situation of Lemma \ref{IntegraldarstBiasDecDens}, if $\Phi_\chi \in L^1(\R)$,
\begin{align*}
\liml_{m \rightarrow \infty} \norm{ \mathfrak{d}(\cdot, m) - f_\chi }_\infty \in [0, \infty).
\end{align*}
The limit equals zero if $\mathcal{N}_\mu \setminus \mathcal{N}_\chi$ has zero Lebesgue measure.
\end{corollary}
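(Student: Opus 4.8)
The plan is to mirror the proof of Theorem~\ref{SatzglmKvgzDm}, but to work directly with the Fourier inversion for densities rather than the $t^{-1}$-weighted inversion for distribution functions; this is precisely why the integrability hypothesis simplifies to $\Phi_X\in L^1(\R)$ and no analogue of (\ref{IntCondPhiBareps}) is needed. First I would extract two consequences of this hypothesis. Since $F_X$ is absolutely continuous with $\Phi_X\in L^1(\R)$, Fourier inversion furnishes a bounded continuous version of $f_X$, so that $\tfrac{1}{2}\curbr{f_X(\xi+)+f_X(\xi-)}=f_X(\xi)$ and the left-hand side of (\ref{2024121504}) collapses to $\mathfrak{d}(\xi,m)-f_X(\xi)$. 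Moreover, as $\Phi_I$ is a c.f. we have $\Phi_I(\delta t)\to\Phi_I(0)=1$ when $\delta\downarrow 0$, while $\abs{\Phi_I(\delta t)\Phi_X(t)\mathcal{P}_\bareps(t,m)}\leq\abs{\Phi_X(t)}$ by virtue of $0\leq\mathcal{P}_\bareps(\cdot,m)\leq 1$ (which follows from $0\leq\Phi_\bareps\leq 1$). Dominated convergence then permits passing the limit $\delta\downarrow 0$ under the integral in the definition of $\mathfrak{R}_\delta(m,\xi)$, so that Corollary~\ref{IntegraldarstBiasDecDens} yields the exact identity
\begin{align*}
\mathfrak{d}(\xi,m)-f_X(\xi)=-\frac{1}{2\pi}\intl_{-\infty}^\infty e^{-i\xi t}\Phi_X(t)\mathcal{P}_\bareps(t,m)\,dt\qquad(\xi\in\R,~m\geq 0).
\end{align*}

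Next I would exploit the structure of the $m$-power. Since $\mathcal{P}_\bareps(t,m)=\rb{1-\Phi_\bareps(t)}^{m+1}$ and $\mathcal{N}_\bareps=\mathcal{N}_\varepsilon$ by Lemma~\ref{LemSymErr}, the sequence $\mathcal{P}_\bareps(t,m)$ is non-increasing in $m$ and converges pointwise to $\ONE_{\mathcal{N}_\varepsilon}(t)$: it is constantly $1$ on $\mathcal{N}_\varepsilon$ and decays to $0$ off $\mathcal{N}_\varepsilon$. This suggests the candidate limit
\begin{align*}
g(\xi):=-\frac{1}{2\pi}\intl_{\mathcal{N}_\varepsilon}e^{-i\xi t}\Phi_X(t)\,dt,
\end{align*}
which is a bounded continuous function, being a Fourier-type transform of the integrable function $\Phi_X\ONE_{\mathcal{N}_\varepsilon}$. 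The decisive bound is
\begin{align*}
\abs{\rb{\mathfrak{d}(\xi,m)-f_X(\xi)}-g(\xi)}\leq\frac{1}{2\pi}\intl_{-\infty}^\infty\abs{\Phi_X(t)}\,\abs{\mathcal{P}_\bareps(t,m)-\ONE_{\mathcal{N}_\varepsilon}(t)}\,dt,
\end{align*}
whose right-hand side does not depend on $\xi$ and tends to zero as $m\to\infty$ by dominated convergence with majorant $\abs{\Phi_X}\in L^1(\R)$. Consequently $\mathfrak{d}(\cdot,m)-f_X\to g$ uniformly on $\R$.

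From this uniform convergence the stated conclusions follow readily. The reverse triangle inequality gives $\norm{\mathfrak{d}(\cdot,m)-f_X}_\infty\to\norm{g}_\infty$, and $\norm{g}_\infty\leq\tfrac{1}{2\pi}\intl_{\mathcal{N}_\varepsilon}\abs{\Phi_X(t)}\,dt<\infty$, placing the limit in $[0,\infty)$. For the final clause, if $\mathcal{N}_\varepsilon\setminus\mathcal{N}_X$ has Lebesgue measure zero then $\Phi_X$ vanishes almost everywhere on $\mathcal{N}_\varepsilon$ (identically on $\mathcal{N}_\varepsilon\cap\mathcal{N}_X$, and on a null set otherwise), whence $g\equiv 0$ and the limit equals zero.

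The bulk of the argument is routine once the interchange of $\liml_{\delta\downarrow 0}$ with the integral is secured. The one point demanding genuine care is upgrading the dominated-convergence estimate from a pointwise to a \emph{uniform} statement: only then does the limit of the supremum norms---rather than merely its limit superior---provably exist, which is what the assertion $\liml_{m\to\infty}\norm{\mathfrak{d}(\cdot,m)-f_X}_\infty\in[0,\infty)$ demands. Isolating the $\xi$-free majorant $\abs{\Phi_X}$ and the explicit limit $g$ is exactly the device that delivers this, and it sharpens slightly the corresponding passage in the proof of Theorem~\ref{SatzglmKvgzDm}.
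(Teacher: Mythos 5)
Your proof is correct. The paper's own argument is a one-line reference back to the proof of Theorem \ref{SatzglmKvgzDm}: after noting that $\Phi_X \in L^1(\R)$ forces a (uniformly) continuous version of $f_X$, it bounds the sup-norm of the bias by an integral over $\R\setminus\mathcal{N}_\varepsilon$ that vanishes by dominated convergence plus a constant integral of $\abs{\Phi_X}$ over $\mathcal{N}_\varepsilon$ --- without the factor $t^{-1}$, which is exactly why no analogue of (\ref{IntCondPhiBareps}) is needed, as you observe. Your argument follows the same skeleton (the bias integral of Corollary \ref{IntegraldarstBiasDecDens}, the $\xi$-free majorant $\abs{\Phi_X}\in L^1(\R)$, dominated convergence in $\delta$ and then in $m$), but it adds one genuine refinement: by exhibiting the explicit uniform limit $g$ of $\mathfrak{d}(\cdot,m)-f_X$ and applying the reverse triangle inequality, you actually prove that $\norm{\mathfrak{d}(\cdot,m)-f_X}_\infty$ \emph{converges}, namely to $\norm{g}_\infty$. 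The paper's estimate, taken literally, only yields $\limsup_{m\rightarrow\infty}\norm{\mathfrak{d}(\cdot,m)-f_X}_\infty \leq \frac{1}{2\pi}\intl_{\mathcal{N}_\varepsilon}\abs{\Phi_X(t)}\,dt$, i.e., finiteness of the limit superior rather than existence of the limit asserted in the statement; your device closes that small gap. The final clause (that $g\equiv 0$ when $\mathcal{N}_\varepsilon\setminus\mathcal{N}_X$ is Lebesgue-null, since then $\Phi_X$ vanishes almost everywhere on $\mathcal{N}_\varepsilon$) is handled the same way in both arguments.
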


\begin{proof}
We remark that $\Phi_\chi \in L^1(\R)$ implies uniform continuity of $f_\chi(\xi)$ with respect to $\xi \in \R$, due to the inversion formula for Fourier transforms. Further arguments are the same as in the proof of Theorem \ref{SatzglmKvgzDm}.
\end{proof}

D.fs. that satisfy the conditions of the previous theorems play an outstanding role, however, they are necessarily of continuous type. Conversely, already a single $\chi$-atom suffices to violate absolute convergence of the Fourier-type integrals for $\mathfrak{D}(\cdot,m)$. Indeed, the challenge in a general $\chi$ is that the associated c.f. $\Phi_\chi$ need not contribute to absolute convergence. Thus, Lebesgue's dominated convergence theorem can not easily be applied. In the sequel, we discuss various non-trivial convergence statements. Possibly the simplest of these corresponds to the case of a compactly supported $\Phi_\bareps$, which is then a symmetric interval, since $\Phi_\bareps$ is even.

\begin{theorem}[convergence of $\mathfrak{D}(\cdot,m)$ for compactly supported $\Phi_\bareps$] \label{TheoConvDecFctCompSup}
Suppose validity of Assumption \ref{Ass2026070901} and the existence of $T_\bareps > 0$ with $\Phi_\bareps(t) = 0$, for all $\abs{t} > T_\bareps$. Define
\begin{align} \label{PointwConv02c}
\Theta(t) := \frac{ \Phi_\chi(t) }{i2\pi t} \ONE_{ [-T_\bareps, T_\bareps] \cap \mathcal{N}_\mu }(t).
\end{align}
Then, for every $\xi \in \R$,
\begin{align} \label{PointwConv07a}
\liml_{m\rightarrow\infty} \frac{\mathfrak{D}(\xi, m)+\mathfrak{D}(\xi-, m)}{2} &= \frac{ F_\chi(\xi) + F_\chi(\xi-) }{2} + \mathcal{F}\curbr{\Theta}(-\xi) + \mathcal{F}\curbr{ \Psi_{T_\bareps, \infty} }(-\xi),
\end{align}
where the last term refers to (\ref{2024122001}). In particular, the second and the third summand are uniformly bounded with respect to $\xi$ in any compact subset of $\R$.
\end{theorem}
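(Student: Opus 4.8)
The plan is to run the whole argument through the local bias integral of Corollary~\ref{IntegraldarstBiasDm}. Its hypothesis (\ref{IntCondPhiBareps}) is assumed, so for every $\xi\in C_\mathfrak{D}$,
\[
\mathfrak{D}(\xi,m)-\frac{F_X(\xi)+F_X(\xi-)}{2}=\liml_{T\to\infty}\mathfrak{I}_T(m,\xi)=\liml_{T\to\infty}\frac{1}{2\pi i}\intl_{-T}^{T}\frac{\mathcal{P}_\bareps(t,m)}{t}\,e^{-i\xi t}\Phi_X(t)\,dt.
\]
The only genuine difficulty is to exchange this $T\to\infty$ limit with the $m\to\infty$ limit we are after, and the compact support of $\Phi_\bareps$ is exactly the device that decouples them. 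Indeed, by (\ref{mpwr}) we have $\mathcal{P}_\bareps(t,m)=\rb{1-\Phi_\bareps(t)}^{m+1}=1$ for all $\abs{t}>T_\bareps$ and every $m$. Hence, for $T>T_\bareps$, I would split $\mathfrak{I}_T(m,\xi)$ into the integral $A_m:=\frac{1}{2\pi i}\intl_{-T_\bareps}^{T_\bareps}\frac{\mathcal{P}_\bareps(t,m)}{t}e^{-i\xi t}\Phi_X(t)\,dt$, which does not depend on $T$, and the tail over $T_\bareps<\abs{t}\le T$, on which $\mathcal{P}_\bareps\equiv1$ so that the tail does not depend on $m$. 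Since Corollary~\ref{IntegraldarstBiasDm} already guarantees that $\liml_{T\to\infty}\mathfrak{I}_T(m,\xi)$ exists and $A_m$ is constant in $T$, the tail must converge, and by its definition (\ref{2024122001}) its limit is $\mathcal{F}\curbr{\Psi_{T_\bareps,\infty}}(-\xi)$; thus $\mathfrak{D}(\xi,m)-\tfrac{1}{2}\rb{F_X(\xi)+F_X(\xi-)}=A_m+\mathcal{F}\curbr{\Psi_{T_\bareps,\infty}}(-\xi)$ for every $m$.

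Next I would let $m\to\infty$ in $A_m$ by dominated convergence. On $[-T_\bareps,T_\bareps]$ one has $0\le1-\Phi_\bareps(t)\le1$, whence $0\le\mathcal{P}_\bareps(t,m)\le\mathcal{P}_\bareps(t,0)$ and, since $\Phi_\bareps$ vanishes precisely on $\mathcal{N}_\bareps=\mathcal{N}_\varepsilon$ (Lemma~\ref{LemSymErr}), the integrand converges pointwise with $\mathcal{P}_\bareps(t,m)\to\ONE_{\mathcal{N}_\varepsilon}(t)$. The dominating function $\frac{\mathcal{P}_\bareps(t,0)}{\abs{t}}\abs{\Phi_X(t)}$ is integrable on $[-T_\bareps,T_\bareps]$: near the origin this is precisely (\ref{IntCondPhiBareps}) together with $\abs{\Phi_X}\le1$, and away from the origin $1/\abs{t}$ and $\Phi_X$ are bounded. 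Passing to the limit and recognizing the definition (\ref{PointwConv02c}) of $\Theta$ gives $\liml_{m\to\infty}A_m=\mathcal{F}\curbr{\Theta}(-\xi)$, which together with the previous display is exactly (\ref{PointwConv07a}). (Note that the compact support forces $\Phi_\bareps\in L^1(\R)$, so $F_\bareps$ is absolutely continuous and $C_\mathfrak{D}=\R$; the average $\tfrac{1}{2}\rb{F_X(\xi)+F_X(\xi-)}$ is what absorbs any atoms of $F_X$.)

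For the final uniform-boundedness claim I would treat the two transforms separately. The function $\Theta$ vanishes on a neighbourhood of the origin, because $0\notin\mathcal{N}_\varepsilon$ and $\mathcal{N}_\varepsilon$ is closed; being bounded with compact support it lies in $L^1(\R)$, so $\norm{\mathcal{F}\curbr{\Theta}}_\infty\le\norm{\Theta}_1<\infty$ on all of $\R$. The tail transform is only conditionally convergent, so here I would pair $\pm t$ to write $\mathcal{F}\curbr{\Psi_{T_\bareps,\infty}}(-\xi)=\frac{1}{\pi}\intl_{T_\bareps}^{\infty}t^{-1}\Im\rb{e^{-i\xi t}\Phi_X(t)}\,dt$, insert $\Im\rb{e^{-i\xi t}\Phi_X(t)}=\intl_{-\infty}^{\infty}\sin\rb{t(x-\xi)}F_X(dx)$, and interchange the order of integration over the truncation $[T_\bareps,T]$, where this is legitimate since $t^{-1}\le T_\bareps^{-1}$ is bounded. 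The inner $t$-integral then becomes a difference of sine integrals $\Si\rb{T(x-\xi)}-\Si\rb{T_\bareps(x-\xi)}$, uniformly bounded by $2\supl_{u\in\R}\abs{\Si(u)}=2\Si(\pi)$; integrating against $F_X$ and letting $T\to\infty$ by dominated convergence shows that $\abs{\mathcal{F}\curbr{\Psi_{T_\bareps,\infty}}(-\xi)}$ is bounded by an absolute constant uniformly in $\xi$, a fortiori on any compact subset of $C_\mathfrak{D}$.

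I expect the main obstacle to be precisely the decoupling in the first paragraph: one must be confident that, after the split at $\abs{t}=T_\bareps$, the tail genuinely carries no $m$-dependence and that its merely conditional convergence is inherited from Corollary~\ref{IntegraldarstBiasDm} rather than having to be re-established by hand. The only other delicate point, the integrability of the dominating function at $t=0$, is exactly what the assumed condition (\ref{IntCondPhiBareps}) provides; every remaining estimate is routine.
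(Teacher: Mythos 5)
Your argument is correct and follows essentially the same route as the paper's proof: both start from the bias integral of Corollary~\ref{IntegraldarstBiasDm}, split at $\abs{t}=T_\bareps$ where $\mathcal{P}_\bareps(\cdot,m)\equiv 1$, identify the tail with $\mathcal{F}\curbr{\Psi_{T_\bareps,\infty}}(-\xi)$ via Lemma~\ref{Lem2024121901}, and treat the inner part by dominated convergence using (\ref{IntCondPhiBareps}). The only cosmetic differences are that the paper separates $[-T_\bareps,T_\bareps]\cap\mathcal{N}_\varepsilon$ (yielding $\mathcal{F}\curbr{\Theta}$ directly) from its complement (an integral that vanishes as $m\rightarrow\infty$) at the outset, whereas you let the pointwise limit $\mathcal{P}_\bareps(\cdot,m)\rightarrow\ONE_{\mathcal{N}_\varepsilon}$ produce $\mathcal{F}\curbr{\Theta}$ inside the dominated-convergence step, and that you re-derive the uniform bound on the tail transform by hand rather than citing the lemma.
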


The last theorem shows, for $\Phi_\bareps$ with a compact support, that the bias between the deconvolution function and the target d.f. always converges to a finite limit. The limit is particularly small if $T_\bareps$ is large, if $[-T_\bareps, T_\bareps] \cap \mathcal{N}_\mu$ is of zero Lebesgue measure or if $\Phi_\chi(t)$ decays, as $t \rightarrow \pm\infty$.

\begin{proof}
In the present situation, for fixed $T>T_\bareps$ and $\xi \in \R$, through Lemma \ref{IntegraldarstDm}, we get
\begin{align} \label{PointwConv02}
\frac{\mathfrak{D}(\xi, m)+\mathfrak{D}(\xi-, m)}{2} &- \frac{ F_\chi(\xi) + F_\chi(\xi-) }{2} = \operatorname{I}_m + \mathcal{F}\curbr{\Theta}(-\xi) + \liml_{T \rightarrow \infty} \mathcal{F}\curbr{ \Psi_{T_\bareps, T} }(-\xi),
\end{align}
where we denote
\begin{align} \label{PointwConv02b}
\operatorname{I}_m &:= \frac{1}{2\pi i} \intl_{ [-T_\bareps,T_\bareps]\setminus \mathcal{N}_\mu  } \frac{\mathcal{P}_\bareps(t,m)}{t} e^{-i\xi t} \Phi_\chi(t) dt.
\end{align}
The integral $\operatorname{I}_m$ is the only component with $\mathcal{P}_\bareps(t,m)<1$. Since $0\notin \mathcal{N}_\mu$, its range of integration includes some neighborhood of $t=0$. Moreover, the integrand is uniformly bounded with respect to $m$ by $t^{-1}\mathcal{P}_\bareps(t,0)$, which is integrable, by Assumption \ref{Ass2026070901} and by finiteness of $T_\bareps$. Hence, Lebesgue's dominated convergence theorem implies the decay of $\operatorname{I}_m$, as $m \rightarrow \infty$. Furthermore, the finiteness of $T_\bareps$ combined with the continuity of $\Theta(t)$ imply the existence of the Fourier transform $\mathcal{F}\curbr{\Theta}(-\xi)$, for every $\xi \in \R$. It is even absolutely and uniformly convergent in any compact subset of the real axis. Lastly, the required statements on the Fourier transform $\mathcal{F}\curbr{\Psi_{T_\bareps, T}}(-\xi)$ immediately follow from Lemma \ref{Lem2024121901}, thereby finishing the proof.
\end{proof}

With regard to the deconvolution density, the analogue to the previous theorem requires additional assumptions.

\begin{assumption}[differentiability of smoothing kernel] \label{Ass2025011301}
The c.f. $\Phi_\iota$ from Assumption \ref{Ass2024121902} is continuously differentiable, for $\abs{t} > 0$, and there exists $a > 2$, with $\Phi_\iota'(t) = \LandauO(\abs{t}^{-a})$, as $t \rightarrow \pm \infty$. 
\end{assumption}

Our next assumption is fulfilled, e.g., if $\Phi_\chi$ can be decomposed into an oscillatory and a monotonic factor.

\begin{assumption}[factorization of $\Phi_\chi$ (case $\mathfrak{d}(\cdot, m)$)] \label{Ass2024121901}
There exist $T_0 \geq 0$, $\kappa \in \mathcal{M}(\C, \mathcal{B}(\R))$, with $|\kappa|(\R) = |\kappa|(D_\kappa)$, and a continuous function of bounded variation $\varphi_\chi : [T_0, \infty] \rightarrow \C$, with $\lim_{t\rightarrow\infty}\varphi_\chi(t)=0$, such that $\Phi_\chi(t) = \Phi_\kappa(t)\varphi_\chi(t)$, whenever $\abs{t} \geq T_0$.
\end{assumption}

By construction, $\kappa$ obviously must be a discrete complex measure. The factorization summarizes two cases. On the one hand, $\Phi_\kappa$ can be the c.f. of a discrete and $\varphi_\chi$ the c.f. of a continuous distribution. Yet, neither of them actually needs to be a c.f.. As an example, consider $\Phi_\chi(t) = t^{-1} \sin(t)$. In this event, Assumption \ref{Ass2024121901} applies, with $\kappa = (2i)^{-1} (\delta_{\curbr{1}} - \delta_{\curbr{-1}})$ and $\varphi_\chi(t) = t^{-1}$.

\begin{theorem}[convergence of $\mathfrak{d}(\cdot,m)$ for compactly supported $\Phi_\bareps$] \label{TheoConvDecDensCompSup}
In the situation of Lemma \ref{IntegraldarstBiasDecDens}, suppose the existence of $T_\bareps > 0$, with $\Phi_\bareps(t) = 0$, for all $\abs{t} > T_\bareps$. Then, under Assumptions \ref{Ass2025011301} and \ref{Ass2024121901}, denoting
\begin{align} \label{2024121501}
\theta(t) :=&\, \frac{ 1 }{2\pi} \Phi_\chi(t) \ONE_{ [-T_\bareps, T_\bareps] \cap \mathcal{N}_\mu }(t),
\end{align}
for every $\xi \in \R$ with $\Delta(\curbr{\xi}, D_\kappa) > 0$, we have
\begin{align} \label{2024121503}
\liml_{m\rightarrow\infty} \mathfrak{d}(\xi,m) = \frac{ f_\chi(\xi+) + f_\chi(\xi-) }{2} - \mathcal{F}\curbr{\theta}(-\xi) - \mathcal{F}\curbr{ \psi_{T_\bareps, 0} }(-\xi),
\end{align}
where the Fourier transform $\mathcal{F}\curbr{ \psi_{T_\bareps, 0} }$ was evaluated in (\ref{2024122002}).
\end{theorem}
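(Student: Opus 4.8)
The plan is to mirror the argument for the distribution function in Theorem~\ref{TheoConvDecFctCompSup}, now starting from the bias representation of Corollary~\ref{IntegraldarstBiasDecDens}, and to extract the three limiting contributions by a region decomposition of the integral (\ref{2024121505}). The decisive structural fact is that $\mathcal{P}_\bareps(t,m) = \rb{1-\Phi_\bareps(t)}^{m+1}$ equals $1$ precisely where $\Phi_\bareps$ vanishes, i.e., on $\mathcal{N}_\varepsilon = \mathcal{N}_\bareps$, whereas $0 < \mathcal{P}_\bareps(t,m) \le \mathcal{P}_\bareps(t,0) \le 1$ with $\mathcal{P}_\bareps(t,m)\to 0$ on the complement. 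Under the compact-support hypothesis, $\mathcal{N}_\varepsilon$ splits into the bounded part $[-T_\bareps,T_\bareps]\cap\mathcal{N}_\varepsilon$ and the tail $\curbr{\abs{t}>T_\bareps}$, while the set on which $\mathcal{P}_\bareps\to 0$ is $[-T_\bareps,T_\bareps]\setminus\mathcal{N}_\varepsilon$. Accordingly I would write $\mathfrak{R}_\delta(m,\xi) = A_\delta(m) + B_\delta + C_\delta$, with $A_\delta(m)$ the integral over $[-T_\bareps,T_\bareps]\setminus\mathcal{N}_\varepsilon$ (the only $m$-dependent piece), $B_\delta$ over $[-T_\bareps,T_\bareps]\cap\mathcal{N}_\varepsilon$ and $C_\delta$ over $\curbr{\abs{t}>T_\bareps}$; in $B_\delta$ and $C_\delta$ the factor $\mathcal{P}_\bareps(t,m)$ has already been set to $1$.

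Next I would dispose of the two compact pieces. On the bounded sets inside $[-T_\bareps,T_\bareps]$ the integrand is dominated by $\abs{\Phi_X}\le 1$ times $\abs{\Phi_I(\delta t)}\le 1$, hence lies in $L^1$ uniformly in $\delta$, so dominated convergence together with $\Phi_I(\delta t)\to\Phi_I(0)=1$ as $\delta\downarrow 0$ gives $\liml_{\delta\downarrow 0} B_\delta = -\mathcal{F}\curbr{\theta}(-\xi)$, with $\theta$ as in (\ref{2024121501}), and $\liml_{\delta\downarrow 0} A_\delta(m) = A(m)$, where $A(m) := -\frac{1}{2\pi}\intl_{[-T_\bareps,T_\bareps]\setminus\mathcal{N}_\varepsilon} e^{-i\xi t}\Phi_X(t)\mathcal{P}_\bareps(t,m)\,dt$. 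These limits are elementary precisely because the ranges are bounded and $\Phi_X$ is automatically bounded there.

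The hard part will be the tail $C_\delta$, where $\Phi_X$ need not be integrable and the $m$-dependence has disappeared, leaving $\liml_{\delta\downarrow0} C_\delta = -\frac{1}{2\pi}\liml_{\delta\downarrow 0}\intl_{\abs{t}>T_\bareps} e^{-i\xi t}\Phi_I(\delta t)\Phi_X(t)\,dt$. This is exactly where Assumptions~\ref{Ass2025011301} and~\ref{Ass2024121901} are needed. Using the factorization $\Phi_X = \gamma_X\varphi_X$, with $\gamma_X$ the Fourier--Stieltjes transform of the step function $G_X$ and $\varphi_X$ continuous, of bounded variation and vanishing at infinity, I would integrate by parts against the oscillatory factor $e^{-i\xi t}$ (an Abel/Dirichlet-type argument), the decay $\Phi_I'(t) = \LandauO\curbr{\abs{t}^{-\nu}}$ with $\nu>2$ controlling the contribution of the regularizer $\Phi_I(\delta t)$ uniformly in $\delta$. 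This shows that the regularized tail integral converges as $\delta\downarrow 0$ and identifies its limit with $-\mathcal{F}\curbr{\psi_{T_\bareps,0}}(-\xi)$, the explicit evaluation being (\ref{2024122002}). The condition $\Delta(\curbr{\xi}, D_{G_X})>0$ is indispensable here: it keeps the external frequency $\xi$ away from the atomic frequencies $D_{G_X}$ carried by $\gamma_X$, thereby ruling out resonance and guaranteeing that the tail limit is finite and bounded in $\xi$ on compact sets avoiding $D_{G_X}$. I would defer the detailed oscillatory estimate to the auxiliary computation behind (\ref{2024122002}), since that lemma was tailored for this purpose.

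Finally I would let $m\to\infty$. Only $A(m)$ depends on $m$; on $[-T_\bareps,T_\bareps]\setminus\mathcal{N}_\varepsilon$ we have $\Phi_\bareps(t)>0$, so $\mathcal{P}_\bareps(t,m) = \rb{1-\Phi_\bareps(t)}^{m+1}\to 0$ pointwise, while $\abs{e^{-i\xi t}\Phi_X(t)\mathcal{P}_\bareps(t,m)}\le 1$ on the bounded range is an integrable dominating function independent of $m$. Dominated convergence then forces $A(m)\to 0$, and adding the two $m$-independent contributions computed above yields precisely (\ref{2024121503}). The only genuinely delicate step is the tail analysis; the rest is bounded-domain dominated-convergence bookkeeping.
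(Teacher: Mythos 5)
Your proposal is correct and follows essentially the same route as the paper: the same three-way decomposition of $\mathfrak{R}_\delta(m,\xi)$ into the $m$-dependent piece on $[-T_\bareps,T_\bareps]\setminus\mathcal{N}_\varepsilon$, the piece on $[-T_\bareps,T_\bareps]\cap\mathcal{N}_\varepsilon$ yielding $-\mathcal{F}\curbr{\theta}(-\xi)$, and the tail $\abs{t}>T_\bareps$ handled by Lemma \ref{Lem2024121902}, with dominated convergence disposing of the first piece as $m\rightarrow\infty$. The only difference is cosmetic: the paper simply cites Lemma \ref{Lem2024121902} for the tail, whereas you sketch the oscillatory argument that the lemma encapsulates.
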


\begin{proof}
We proceed from Lemma \ref{IntegraldarstBiasDecDens}. It is clear that
\begin{align} \label{2024121506}
\mathfrak{R}_\delta(m, \xi) = \operatorname{R}_{\delta, m} - \mathcal{F}\curbr{\Phi_\iota(\delta\cdot)\theta}(-\xi) - \mathcal{F}\curbr{ \psi_{T_\bareps, \delta} }(-\xi),
\end{align}
the right hand side referring to
\begin{align*}
\operatorname{R}_{\delta, m} := -\frac{1}{2\pi} \intl_{ [-T_\bareps, T_\bareps] \setminus \mathcal{N}_\mu } e^{-i\xi t} \Phi_\iota(\delta t) \Phi_\chi(t)\mathcal{P}_\bareps(t,m) dt.
\end{align*}
By dominated convergence, $\operatorname{R}_{\delta, m}$ approaches a finite limit, as $\delta \downarrow 0$, and eventually vanishes, as $m \rightarrow \infty$. The remaining addends in (\ref{2024121506}) only depend on $\delta$. As $\delta \downarrow 0$, we have $\mathcal{F}\curbr{\Phi_\iota(\delta\cdot)\theta}(-\xi) \rightarrow \mathcal{F}\curbr{\theta}(-\xi)$, by continuity of $\theta$. Finally, concerning $\mathcal{F}\curbr{ \psi_{T_\bareps, \delta} }(-\xi)$, a reference to Lemma \ref{Lem2024121902} completes the proof.
\end{proof}

\subsection{Arguments of weak convergence} \label{SectWeakConvmpwr}

Particularly if $\Phi_\bareps(t)$ is non-increasing with respect to $t \geq 0$ and vanishing at infinity, it becomes obvious, for fixed $m \geq 0$, that the graph of the $m$-power $\mathcal{P}_\bareps(\cdot, m)$ on the positive real axis resembles that of a non-negative d.f.. In such a case, it is reasonable to expect that the convergence of $\mathfrak{D}(\cdot,m)$ can be justified by weak convergence. To this end, suppose that the limit $\Phi_\bareps(\infty)$ exists and hence $\mathcal{P}_\bareps(\infty,m) \in [0,1]$. Additionally, assume that $\mathcal{P}_\bareps([0, \infty], m) < \infty$, for each $m \geq 0$. Then, since $\Phi_\bareps(0)=1$, by definition of a c.f., we can write
\begin{align} \label{mpwrasapproxid1}
\mathcal{P}_\bareps(t,m) = \intl_{[0,t]} \mathcal{P}_\bareps(dv,m) \hspace{1cm} (t \in [0,\infty], ~ m \geq 0).
\end{align}
This function, if $\Phi_\bareps(t)$ is once continuously differentiable on $[0,\infty]$, even possesses a density with respect to the Lebesgue measure, viz
\begin{align} \label{mpwrasapproxid2}
\frac{d}{dv} \mathcal{P}_\bareps(v,m) = -(m+1)\Phi_\bareps'(v) \mathcal{P}_\bareps(v,m-1).
\end{align}
In any case, by continuity of $\Phi_\bareps(t)$, also $\mathcal{P}_\bareps(t, m)$ is continuous with respect to $t \in [0,\infty]$ and the integral signs $\int_{[0,t]}$, $\int_{(0,t)}$ and $\int_0^t$ have the same meaning in (\ref{mpwrasapproxid1}). The transition $m \rightarrow \infty$, however, forces us to employ the notion of a compact interval, especially if $\Phi_\bareps$ vanishes at one of the endpoints. More precisely, we observe that
\begin{align} \label{mpwrasapproxid3}
\mathcal{P}_\bareps(t,\infty) := \liml_{m \rightarrow \infty} \mathcal{P}_\bareps(t,m) = 
\begin{cases}
1, & \mbox{if } t \in [0,\infty] \cap \mathcal{N}_\mu, \\
0, & \mbox{if } t \in [0,\infty] \setminus \mathcal{N}_\mu.
\end{cases}
\end{align}
Evidently, this function establishes a signed measure of discrete type, and a point $t \in [0,\infty]$ has mass one if $\Phi_\bareps(t)$ vanishes there and mass zero otherwise. Unlike $\mathcal{P}_\bareps(t,m)$, the limit measure therefore exhibits discontinuities. It can be expressed in terms of indicator functions. For this, and especially for the actual applicability of the above limit statement to the deconvolution function, we impose the assumption below.

\begin{assumption} \label{mpwrasapproxid3b}
$\Phi_\bareps(\infty)$ exists and $\mathcal{P}_\bareps(\cdot,m)$ is of bounded variation on $[0,\infty]$, uniformly with respect to $m\geq0$, formally
\begin{align} \label{mpwrasapproxid4}
\supl_{m \geq 0} \abs{ \mathcal{P}_\bareps }([0, \infty],m) < \infty. 
\end{align}
In addition, there exists $K \in \N$ and a sequence $(\tau_k)_{k\in I} \subseteq \mathcal{N}_\mu\cap [0,\infty]$, for consecutive integers $I := \rrb{1, 2, \hdots, 2K}$, such that
\begin{align*}
\begin{cases}
\tau_k \leq \tau_{k+1} \mbox{ and } [\tau_k, \tau_{k+1}] \subseteq \mathcal{N}_\mu, & \mbox{for odd } k \in I, \\
\tau_k < \tau_{k+1}, & \mbox{for even } k \in I,
\end{cases}
\end{align*}
as well as
\begin{align*}
\mathcal{N}_\mu \cap [0,\infty] = \bigcup\limits_{ k=1}^K [\tau_{2k-1},\tau_{2k}].
\end{align*}
\end{assumption}

It is easy to find examples that satisfy (\ref{mpwrasapproxid4}), e.g., if $\Phi_\bareps$ decays monotonically towards infinity. In any case, $\tau_1>0$, since $0\notin \mathcal{N}_\mu$, and a segment $[\tau_k,\tau_{k+1}]$, for odd $k \in I$, is either an isolated point or a continuous segment of the positive real axis, where $\Phi_\bareps$ vanishes. We can now make the following statement.

\begin{lemma}[weak convergence of the $m$-power] \label{mpwrasapproxid5c}
Under Assumption \ref{mpwrasapproxid3b}, for any continuous function $u : [0, \infty] \rightarrow \R$, we have
\begin{align*}
\liml_{m \rightarrow \infty} \intl_{[0,\infty]} u(v) \mathcal{P}_\bareps(dv,m) = \suml_{k=1}^{K-1} \rrb{ u(\tau_{2k-1}) - u(\tau_{2k}) } + u(\tau_{2K-1}) - \IndNr{\tau_{2K} \, < \, \infty} u(\tau_{2K}).
\end{align*}
\end{lemma}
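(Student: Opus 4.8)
The plan is to read the assertion as a Helly--Bray (weak convergence) statement for Riemann--Stieltjes integrals, whose two decisive hypotheses have been deliberately prepared in the preceding paragraphs. By (\ref{mpwrasapproxid3}) the integrators converge pointwise, $\mathcal{P}_\bareps(v,m)\rightarrow \ONE_{\mathcal{N}_\varepsilon\cap[0,\infty]}(v)$ at \emph{every} $v\in[0,\infty]$: the limit equals $1$ on $\mathcal{N}_\varepsilon\cap[0,\infty]$, where $\Phi_\bareps$ vanishes so that $\mathcal{P}_\bareps(v,m)\equiv 1$, and it equals $0$ on the complement, where $0\leq 1-\Phi_\bareps(v)<1$ forces the decay. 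At the same time, Assumption \ref{mpwrasapproxid3b}, through the uniform bound (\ref{mpwrasapproxid4}), guarantees that the $\mathcal{P}_\bareps(\cdot,m)$ are of uniformly bounded variation on $[0,\infty]$. These are exactly the ingredients that Helly's second limit theorem requires.

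First I would pass to a genuinely compact interval, for instance via the increasing homeomorphism $v\mapsto v/(1+v)$ carrying $[0,\infty]$ onto $[0,1]$, under which continuity, bounded variation and the value of a Stieltjes integral are all preserved; this legitimises the classical theorem within the $\overline\R$-framework of \S\ref{SecNotNPrel}. Helly's theorem then delivers, for every continuous $u$,
\begin{align*}
\liml_{m\rightarrow\infty} \intl_{[0,\infty]} u(v)\, \mathcal{P}_\bareps(dv,m) = \intl_{[0,\infty]} u(v)\, \ONE_{\mathcal{N}_\varepsilon\cap[0,\infty]}(dv).
\end{align*}
The right-hand integral is a bona fide Riemann--Stieltjes integral, as the continuous $u$ shares no discontinuity with its bounded-variation integrator, namely the step function $g:=\ONE_{\mathcal{N}_\varepsilon\cap[0,\infty]}=\suml_{k=1}^{K}\ONE_{[\tau_{2k-1},\tau_{2k}]}$ dictated by Assumption \ref{mpwrasapproxid3b}.

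It then remains to evaluate $\intl_{[0,\infty]}u\,dg$ by its jump structure. Since $u$ is continuous, a jump of $g$ at an interior point $\tau$ contributes $u(\tau)$ times its height $g(\tau+)-g(\tau-)$, the value $g(\tau)$ itself being irrelevant in the limit of the Riemann--Stieltjes sums. A non-degenerate summand $\ONE_{[\tau_{2k-1},\tau_{2k}]}$ with $\tau_{2k}<\infty$ therefore produces the up-jump $+u(\tau_{2k-1})$ at its left end and the down-jump $-u(\tau_{2k})$ at its right end, hence $u(\tau_{2k-1})-u(\tau_{2k})$; a degenerate zero-interval $\tau_{2k-1}=\tau_{2k}$ is an isolated spike of vanishing net height and contributes $u(\tau_{2k-1})-u(\tau_{2k})=0$, consistently. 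The sole exceptional case is $k=K$ with $\tau_{2K}=\infty$: then $g\equiv 1$ on $[\tau_{2K-1},\infty]$, so only the up-jump $u(\tau_{2K-1})$ survives and there is no down-jump at the compactification point. As $\tau_{2k}<\infty$ for each $k\leq K-1$ (the inter-interval gaps being non-degenerate by Assumption \ref{mpwrasapproxid3b}), summation over $k$ reproduces exactly
\begin{align*}
\suml_{k=1}^{K-1}\rrb{ u(\tau_{2k-1})-u(\tau_{2k}) } + u(\tau_{2K-1}) - \IndNr{\tau_{2K}\,<\,\infty}u(\tau_{2K}).
\end{align*}

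The hard part is not the arithmetic but the careful bookkeeping at the two places where the naive picture could mislead. At a degenerate zero-interval one must use that a continuous $u$ annihilates the $+1$ and $-1$ contributions of the spike at an isolated zero of $\Phi_\bareps$; at the compactification point $v=\infty$ the presence or absence of a down-jump is governed precisely by whether $\Phi_\bareps(\infty)=0$, which is exactly what $\IndNr{\tau_{2K}<\infty}$ records. The structural assumption that does the genuine work is the uniform variation bound (\ref{mpwrasapproxid4}), which keeps any mass from escaping to infinity and thereby secures the applicability of Helly's theorem on $[0,\infty]$.
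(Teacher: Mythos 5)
Your proposal is correct and follows essentially the same route as the paper: pointwise convergence of $\mathcal{P}_\bareps(\cdot,m)$ to the indicator of $\mathcal{N}_\varepsilon\cap[0,\infty]$ plus the uniform variation bound (\ref{mpwrasapproxid4}) feed the general Helly--Bray theorem, after which the limiting Stieltjes integral of the step function is evaluated term by term. The only cosmetic difference is that you read off the jump contributions directly from the Riemann--Stieltjes sums, whereas the paper decomposes the limit integrator into Dirac measures and limits of shifted Dirac measures before invoking Helly--Bray a second time; both yield the same bookkeeping at degenerate intervals and at the point at infinity.
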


The proof essentially relies on the Helly-Bray theorem. Yet, since $\mathcal{P}_\bareps(\cdot, m)$ is not necessarily monotonic, rather than the probabilistic, we require the general version (see, e.g., Theorem 16.4 in Ch. 1 in \cite{widder1946}).

\begin{proof}
First, a comparison with (\ref{mpwrasapproxid3}), for $t \in [0,\infty]$, shows that
\begin{align} \label{mpwrasapproxid3e}
\mathcal{P}_\bareps(t,\infty) &= \suml_{k=1}^{K-1} \cb{ \IndNr{t \, \geq \, \tau_{2k-1}} - \IndNr{t \, > \, \tau_{2k}} } + \cb{ \IndNr{t \, \geq \, \tau_{2K-1}} - \IndNr{\infty \, \geq \, t \, > \, \tau_{2K}} }.
\end{align}
We remark that the last indicator vanishes, if $\tau_{2K}=\infty$. If also $\tau_{2K-1}=\infty$, the second last indicator equals one if and only if $t=\infty$. Now, the validity of (\ref{mpwrasapproxid4}) admits a reference to the Helly-Bray theorem. Accordingly, $\mathcal{P}_\bareps(t,\infty)$ is of bounded variation on $[0,\infty]$, and, for all with respect to $v \in [0,\infty]$ continuous functions $u(v)$, we have
\begin{align*}
\operatorname{L} := \liml_{m \rightarrow \infty} \intl_{[0,\infty]} u(v) \mathcal{P}_\bareps(dv,m) &= \intl_{[0,\infty]} u(v) \mathcal{P}_\bareps(dv,\infty).
\end{align*}
To evaluate the integral on the right hand side, we require an appropriate representation for the sum (\ref{mpwrasapproxid3e}). On the one hand, for $0\leq \tau \leq \infty$, it is clear that $\IndNr{t \, \geq \, \tau}$ is the d.f. associated with the Dirac measure with mass at $\tau$, i.e., $\IndNr{t \, \geq \, \tau} = \delta_{\rrb{\tau}}([0,t])$. On the other hand, for $0\leq \tau<\infty$, $\IndNr{t \, > \, \tau}$ corresponds to the limit of a sequence of such measures. In particular,
\begin{align*}
\IndNr{t \, > \, \tau} = \liml_{\substack{\eta \downarrow 0\\ \eta > 0}} \delta_{\rrb{\tau+\eta}}([0,t]).
\end{align*}
Therefore, by (\ref{mpwrasapproxid3e}),
\begin{align*}
\operatorname{L} &= \suml_{k=1}^{K-1} \intl_{[0,\infty]} u(v) \cb{ \delta_{\rrb{\tau_{2k-1}}}(dv) - \lim_{\substack{\eta \downarrow 0\\ \eta > 0}} \delta_{\rrb{\tau_{2k}+\eta}}(dv) } \\ \nonumber
&\hspace{2cm} + \intl_{[0,\infty]} u(v) \delta_{\rrb{\tau_{2K-1}}}(dv) - \IndNr{\tau_{2K} \, < \, \infty} \intl_{[0,\infty]} u(v) \lim_{\substack{\eta \downarrow 0\\ \eta > 0}} \delta_{\rrb{\tau_{2K}+\eta}}(dv).
\end{align*}
Since $\delta_{\rrb{\tau+\eta}}([0,\infty])=1$, for any $\eta>0$, we identify $\delta_{\rrb{\tau+\eta}}([0,t])$ as a sequence of functions of bounded variation on $[0,\infty]$, uniformly with respect to $\eta\geq 0$. Thus, again as a consequence of the Helly-Bray theorem and by continuity of $u(v)$, we obtain
\begin{align*}
\intl_{[0,\infty]} u(v) \lim_{\substack{\eta \downarrow 0\\ \eta > 0}} \delta_{\rrb{\tau+\eta}}(dv) &= \lim_{\substack{\eta \downarrow 0\\ \eta > 0}} \intl_{[0,\infty]} u(v) \delta_{\rrb{\tau+\eta}}(dv) \\ \nonumber
&= \lim_{\substack{\eta \downarrow 0\\ \eta > 0}} u(\tau+\eta) \\ \nonumber
&= u(\tau).
\end{align*}
Altogether, the proof is completed.
\end{proof}

We next apply the previous result to establish pointwise convergence of deconvolution function and density.

\begin{theorem}[pointwise convergence of $\mathfrak{D}(\cdot, m)$ I] \label{TheoPWConv1}
Under Assumption \ref{mpwrasapproxid3b}, for any $\xi \in \R$, we have
\begin{align} \label{EqPWConv1}
\liml_{m \rightarrow \infty} \frac{ \mathfrak{D}(\xi, m) + \mathfrak{D}(\xi-, m) }{2} = \frac{ F_\chi(\xi) + F_\chi(\xi-) }{2} + \suml_{k=1}^K \mathcal{F}\curbr{ \Psi_{ \tau_{2k-1}, \tau_{2k} } }(-\xi),
\end{align}
where the sum involves the Fourier transform (\ref{ConvPropDecFct3b}). It equals zero, if $\mathcal{N}_\mu$ has Lebesgue measure zero.
\end{theorem}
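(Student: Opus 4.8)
The plan is to start from the single-point bias representation of Corollary \ref{IntegraldarstBiasDm} and to reduce the limit $m\to\infty$ to the weak convergence of the $m$-power supplied by Lemma \ref{mpwrasapproxid5c}. Fix $\xi\in C_\mathfrak{D}$ and abbreviate $\mathcal{Q}(t,\xi):=\Im e^{i\xi t}\Phi_X(-t)=\intl_{-\infty}^\infty \sin\!\rb{t(\xi-x)}F_X(dx)$, exactly as in the proof of Corollary \ref{IntegraldarstBiasDm}. Folding the integral (\ref{PointwConv01b}) onto the positive half-axis as in that proof, the bias reads
\begin{align*}
\mathfrak{D}(\xi,m)-\frac{F_X(\xi)+F_X(\xi-)}{2}=-\frac{1}{\pi}\intl_0^\infty \frac{\mathcal{Q}(t,\xi)}{t}\mathcal{P}_\bareps(t,m)\,dt,
\end{align*}
the integrability near the origin being covered by the standing condition (\ref{IntCondPhiBareps}) under which Corollary \ref{IntegraldarstBiasDm} operates. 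The obstacle is that here $\mathcal{P}_\bareps(\cdot,m)$ enters as an integrand weight, whereas Lemma \ref{mpwrasapproxid5c} controls integrals against the differential $\mathcal{P}_\bareps(dv,m)$; the bulk of the work is to convert the former into the latter.

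First I would write $\mathcal{P}_\bareps(t,m)=\intl_{[0,t]}\mathcal{P}_\bareps(dv,m)$ by (\ref{mpwrasapproxid1}) and apply Fubini on the finite triangle $\curbr{0\le v\le t\le T}$---legitimate since $\mathcal{P}_\bareps(\cdot,m)$ has finite total variation and $\mathcal{Q}(t,\xi)/t$ is bounded and continuous there---to obtain $\intl_0^T \frac{\mathcal{Q}(t,\xi)}{t}\mathcal{P}_\bareps(t,m)\,dt=\intl_{[0,T]} w_T(v)\,\mathcal{P}_\bareps(dv,m)$ with the tail $w_T(v):=\intl_v^T \frac{\mathcal{Q}(t,\xi)}{t}\,dt$. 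Writing $w_T(v)=\intl_{-\infty}^\infty\cb{\Si(T(\xi-x))-\Si(v(\xi-x))}F_X(dx)$ exhibits $w_T$ as uniformly bounded in $(v,T)$ by $2\norm{\Si}_\infty$ and, by $\Si(u)\to\tfrac{\pi}{2}\sgn(u)$, as pointwise convergent to $w(v,\xi):=\intl_v^\infty \frac{\mathcal{Q}(t,\xi)}{t}\,dt$, a function that is continuous on $[0,\infty]$ with $w(\infty,\xi)=0$ (a possible $F_X$-atom at $\xi$ is harmless, contributing $\sin 0=0$ to $\mathcal{Q}$). Dominated convergence against the finite total-variation measure $\abs{\mathcal{P}_\bareps}(\cdot,m)$, whose mass is bounded uniformly in $m$ by (\ref{mpwrasapproxid4}), then permits sending $T\to\infty$ and yields $-\tfrac{1}{\pi}\intl_{[0,\infty]}w(v,\xi)\,\mathcal{P}_\bareps(dv,m)$.

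Now $w(\cdot,\xi)$ is continuous on the compact interval $[0,\infty]$, so under Assumption \ref{mpwrasapproxid3b} Lemma \ref{mpwrasapproxid5c} applies with $u=w(\cdot,\xi)$ and gives, as $m\to\infty$,
\begin{align*}
\intl_{[0,\infty]}w(v,\xi)\,\mathcal{P}_\bareps(dv,m)\longrightarrow \suml_{k=1}^{K-1}\rb{w(\tau_{2k-1},\xi)-w(\tau_{2k},\xi)}+w(\tau_{2K-1},\xi)-\IndNr{\tau_{2K}<\infty}\,w(\tau_{2K},\xi).
\end{align*}
Since $w(a,\xi)-w(b,\xi)=\intl_a^b \frac{\mathcal{Q}(t,\xi)}{t}\,dt$, and $w(\tau_{2K-1},\xi)=\intl_{\tau_{2K-1}}^\infty \frac{\mathcal{Q}(t,\xi)}{t}\,dt$ when $\tau_{2K}=\infty$, each difference collapses to the integral of $\mathcal{Q}(t,\xi)/t$ over the zero-segment $[\tau_{2k-1},\tau_{2k}]$, so the right-hand side equals $\intl_{[0,\infty]\cap\mathcal{N}_\varepsilon}\mathcal{Q}(t,\xi)/t\,dt$.

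Folding this back by the same symmetry computation that produced (\ref{ConvPropDecFct3b}) identifies $-\tfrac{1}{\pi}$ times the $k$-th segment integral with $\mathcal{F}\curbr{\Psi_{\tau_{2k-1},\tau_{2k}}}(-\xi)$, which gives (\ref{EqPWConv1}). Finally, if $\mathcal{N}_\varepsilon$ is Lebesgue-null then every segment $[\tau_{2k-1},\tau_{2k}]$ degenerates to a single point, each integral over it vanishes, and the sum is zero. I expect the main obstacle to be the justification of the interchange of the two limits---precisely the $T\to\infty$ passage carried out inside the signed measure $\mathcal{P}_\bareps(dv,m)$---which is exactly where the uniform bound $2\norm{\Si}_\infty$ on $w_T$ and the uniform bounded variation (\ref{mpwrasapproxid4}) are indispensable.
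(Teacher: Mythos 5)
Your strategy is, at its core, the paper's own: transfer the $m$-power from an integrand weight to the integrator (your Fubini on the triangle $\curbr{0 \leq v \leq t \leq T}$ is exactly the Stieltjes integration by parts the paper performs), invoke the Helly--Bray-based Lemma \ref{mpwrasapproxid5c} on the resulting integral against $\mathcal{P}_\bareps(dv,m)$, and identify the limit with the sum of the transforms $\mathcal{F}\curbr{\Psi_{\tau_{2k-1},\tau_{2k}}}(-\xi)$ via (\ref{ConvPropDecFct3b}) and (\ref{2024122001}). All of those substantive steps, including the uniform bound $2\norm{\Si}_\infty$ on $w_T$, the continuity of $w(\cdot,\xi)$ on $[0,\infty]$ with $w(\infty,\xi)=0$, and the final bookkeeping of the case $\tau_{2K}=\infty$, are correct and match the paper's computation.

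There is, however, one genuine discrepancy with the theorem as stated. Theorem \ref{TheoPWConv1} assumes only Assumption \ref{mpwrasapproxid3b}, whereas you anchor the argument to the bias representation of Corollary \ref{IntegraldarstBiasDm} and therefore import the integrability condition (\ref{IntCondPhiBareps}) as an additional hypothesis --- you say so explicitly (``the standing condition (\ref{IntCondPhiBareps}) under which Corollary \ref{IntegraldarstBiasDm} operates''). This condition is not cosmetic for your route: your Fubini step on the closed triangle down to $v=t=0$ needs absolute integrability of $t^{-1}\abs{\mathcal{Q}(t,\xi)}\mathcal{P}_\bareps(t,m)$ near the origin, and since $\abs{\mathcal{Q}(t,\xi)}$ is in general only $\LandauO(1)$ there (think of $F_X$ without a first moment), the isolated factor $\mathcal{P}_\bareps(t,m)$ must supply the integrability, which is precisely (\ref{IntCondPhiBareps}). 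The paper avoids this by starting instead from the unilateral representation (\ref{OneSidedDekfkt}), keeping the full factor $1-\mathcal{P}_\bareps(t,m)$ (so the origin is handled by the principal value built into the inversion formula), integrating by parts only on $[T_1,T_2]$ with $T_1>0$, and then letting $T_1 \downarrow 0$ using Lemma \ref{Lem2024121901} to control $\mathcal{F}\curbr{\Psi_{T_1,t}}(-\xi)$ in that limit. Note that other theorems in the same section (\ref{SatzglmKvgzDm}, \ref{TheoConvDecFctCompSup}, \ref{PointwConvDecFctII}) list (\ref{IntCondPhiBareps}) explicitly while this one does not, so the omission is deliberate; to prove the theorem in its stated generality you would need to replace your starting point by (\ref{OneSidedDekfkt}) and carry the lower truncation $T_1$ through the integration by parts as the paper does. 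Everything downstream of that modification in your write-up survives unchanged.
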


Note that the conditions of the theorem are especially satisfied if $\mathcal{N}_\mu \cap \R = \emptyset$, and if there exists $t_0>0$ for which $\Phi_\bareps(t)$ exhibits monotonicity on $\abs{t} > t_0$.

\begin{proof}[Proof of Theorem \ref{TheoPWConv1}]
For brevity, we write $\operatorname{D}(\xi,m) := \frac{1}{2} \curbr{ \mathfrak{D}(\xi, m) + \mathfrak{D}(\xi-, m) }$ and $G(t,\xi) := \Im e^{i\xi t} \Phi_\chi(-t)$. Then, for $\xi \in \R$, elementary manipulations of the integral from Lemma \ref{IntegraldarstDm}, upon accounting for (\ref{FTDm1}), yield
\begin{align} \label{ConvPropDecFct1}
\operatorname{D}(\xi,m) = \frac{1}{2} + \liml_{ \substack{ T_1 \downarrow 0 \\ T_2 \uparrow \infty } } \intl_{[T_1,T_2]} \frac{ G(t, \xi) }{ \pi t } \rrb{ 1 - \mathcal{P}_\bareps(t,m) } dt.
\end{align}
Observe that $\int_{T_1}^t (\pi s)^{-1} G(s, \xi) ds = - \mathcal{F}\curbr{\psi_{T_1, t}}(-\xi)$, for $t \geq T_1$. Thus, with $0 < T_1 < T_2 < \infty$, partial integration leads to
\begin{align} \nonumber
\intl_{[T_1,T_2]} \rrb{ 1 - \mathcal{P}_\bareps(t,m) } \frac{ G(t, \xi) }{ \pi t } dt &= - \rrb{ 1 - \mathcal{P}_\bareps(T_2,m) } \mathcal{F}\curbr{ \Psi_{ T_1, T_2 } }(-\xi) \\ \label{ConvPropDecFct3}
&\hspace{1.5cm} - \intl_{[T_1,T_2]} \mathcal{F}\curbr{ \Psi_{ T_1, t } }(-\xi) \mathcal{P}_\bareps(dt,m).
\end{align}
In Lemma \ref{Lem2024121901}, for any fixed $T_1 \geq 0$, the Fourier transform $\mathcal{F}\curbr{\Psi_{T_1, t}}(-\xi)$ was verified as a uniformly continuous function of $t \in [0, \infty]$. Furthermore, under the theorem's conditions, $\mathcal{P}_\bareps(\infty,m)$ exists. Hence, if we combine (\ref{ConvPropDecFct1}) with (\ref{ConvPropDecFct3}), for $\xi\in \R$, we get
\begin{align*}
\operatorname{D}(\xi,m) = \frac{1}{2} - \rrb{ 1 - \mathcal{P}_\bareps(\infty,m) } \mathcal{F}\curbr{ \Psi_{ 0, \infty } }(-\xi) - \intl_{[0,\infty]} \mathcal{F}\curbr{ \Psi_{ 0, t } }(-\xi) \mathcal{P}_\bareps(dt,m).
\end{align*}
As $m \rightarrow \infty$, the curved bracket in the second summand either vanishes or tends to unity, depending on whether or not $\infty \in \mathcal{N}_\mu$. Also, it is easy to see that
\begin{align*}
\mathcal{F}\curbr{ \Psi_{ 0, T } }(-\xi) - \mathcal{F}\curbr{ \Psi_{ 0, S } }(-\xi) = \mathcal{F}\curbr{ \Psi_{ S, T } }(-\xi) \hspace{1cm} (S \leq T).
\end{align*}
According to Lemma \ref{mpwrasapproxid5c}, we thus arrive at
\begin{align} \nonumber
\liml_{m \rightarrow \infty} \operatorname{D}(\xi,m) &= \frac{1}{2} - \mathcal{F}\curbr{ \Psi_{ 0, \infty } }(-\xi)\IndNr{\infty \, \notin \, \mathcal{N}_\mu} + \suml_{k=1}^{K-1} \mathcal{F}\curbr{ \Psi_{ \tau_{2k-1}, \tau_{2k} } }(-\xi) \\ \label{ConvPropDecFct4}
&\hspace{2cm} - \mathcal{F}\curbr{ \Psi_{ 0, \tau_{2K-1} } }(-\xi) + \IndNr{\tau_{2K} \, < \, \infty} \mathcal{F}\curbr{ \Psi_{ 0, \tau_{2K} } }(-\xi).
\end{align}
Note that $\mathcal{F}\curbr{ \Psi_{ 0, \infty } }(-\xi) = \frac{1}{2} \curbr{ 1 - F_\chi(\xi) - F_\chi(\xi-) }$, by (\ref{2024122001}). Finally, since $\infty \notin \mathcal{N}_\mu$ implies that $\tau_{2K}<\infty$, the right hand side then matches (\ref{EqPWConv1}). Conversely, if $\infty \in \mathcal{N}_\mu$, then necessarily $\tau_{2K}=\infty$ and the second and the last summand in (\ref{ConvPropDecFct4}) both vanish. But we always have $\mathcal{F}\curbr{ \Psi_{ 0, \tau_{2K-1} } }(-\xi) = \mathcal{F}\curbr{ \Psi_{ 0, \infty } }(-\xi) - \mathcal{F}\curbr{ \Psi_{ \tau_{2K-1}, \infty } }(-\xi)$, which again validates (\ref{EqPWConv1}).
\end{proof}

We proceed with the analogue statement for the deconvolution density.

\begin{theorem}[pointwise convergence of $\mathfrak{d}(\cdot, m)$ I] \label{TheoPWConvdm1}
In the situation of Lemma \ref{IntegraldarstBiasDecDens}, under Assumptions \ref{Ass2025011301}, \ref{Ass2024121901} and \ref{mpwrasapproxid3b}, for any $\xi \in \R$ with $\Delta( \curbr{\xi}, D_\kappa ) > 0$, we have
\begin{align} \label{EqPWConvdm1}
\liml_{m \rightarrow \infty} \mathfrak{d}(\xi,m) &= \frac{ f_\chi(\xi+) + f_\chi(\xi-) }{2} - \suml_{k=1}^K \rrb{ \mathcal{F}\curbr{ \psi_{ \tau_{2k-1}, 0 } }(-\xi) - \mathcal{F}\curbr{ \psi_{ \tau_{2k}, 0 } }(-\xi) },
\end{align}
where the sum involves the Fourier transform (\ref{2024122002}), with $\mathcal{F}\curbr{ \psi_{ \infty, 0 } }(-\xi) = 0$. The sum equals zero, if $\mathcal{N}_\mu$ has Lebesgue measure zero.
\end{theorem}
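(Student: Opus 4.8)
The plan is to follow the proof of Theorem~\ref{TheoPWConv1}, but at the level of densities and starting from the bias representation of Corollary~\ref{IntegraldarstBiasDecDens}. Thus I would begin from the identity $\mathfrak{d}(\xi,m) - \frac{1}{2}\curbr{f_X(\xi+)+f_X(\xi-)} = \lim_{\delta\downarrow 0}\mathfrak{R}_\delta(m,\xi)$, with $\mathfrak{R}_\delta(m,\xi)$ as in~(\ref{2024121505}). Combining the contributions at $t$ and $-t$ and exploiting the evenness of $\mathcal{P}_\bareps(\cdot,m)$, I would fold the integral over $\R$ onto $[0,\infty)$, so that $\mathcal{P}_\bareps(\cdot,m)$ appears as an even weight and the domain matches the interval $[0,\infty]$ on which the weak-convergence Lemma~\ref{mpwrasapproxid5c} operates, just as the quantity $\mathcal{Q}(t,\xi)$ serves this purpose in the d.f.\ case.

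The decisive step, exactly as for $\mathfrak{D}(\cdot,m)$, is an integration by parts in $t$ that transfers the weight from the function $\mathcal{P}_\bareps(t,m)$ onto the measure $\mathcal{P}_\bareps(dt,m)$. For fixed $\delta>0$ the relevant antiderivative of the folded integrand converges, since $\Phi_I\in L^1(\R)$ forces $\Phi_I(\delta\cdot)\Phi_X\in L^1(\R)$, and I would identify it with a Fourier transform of the form $\mathcal{F}\curbr{\psi_{t,\delta}}(-\xi)$, which is continuous in its endpoint $t\in[0,\infty]$ by Lemma~\ref{Lem2024121902}. Here Assumption~\ref{Ass2024121901} is indispensable: the factorization $\Phi_X=\gamma_X\varphi_X$ splits the integrand into the continuous, bounded-variation factor $\varphi_X$, which decays at infinity and hence yields a well-behaved antiderivative, and the step part $\gamma_X$, whose Fourier transform is a sum of Dirac masses sited at $D_{G_X}$. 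It is precisely this discrete contribution that makes $\mathcal{F}\curbr{\psi_{a,0}}(-\xi)$ singular at the atoms, which is why the hypothesis $\Delta(\curbr{\xi},D_{G_X})>0$ is imposed; away from $D_{G_X}$ the antiderivative is a genuine continuous function of its endpoint. The lower boundary contribution vanishes because $\mathcal{P}_\bareps(0,m)=0$, while the upper one at $t=\infty$ is governed by $\mathcal{P}_\bareps(\infty,m)$.

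Having recast $\mathfrak{R}_\delta(m,\xi)$ as a boundary term plus an integral of the type $\int_{[0,\infty]}\mathcal{F}\curbr{\psi_{t,\delta}}(-\xi)\,\mathcal{P}_\bareps(dt,m)$, I would first send $\delta\downarrow 0$. Assumption~\ref{Ass2025011301}, namely $\Phi_I'(t)=\LandauO\curbr{\abs{t}^{-\nu}}$ with $\nu>2$ together with $f_I(\xi)=\LandauO\curbr{\xi^{-2}}$, supplies the integrable domination that permits passing this limit inside, replacing $\psi_{t,\delta}$ by $\psi_{t,0}$ exactly as in the compactly supported case of Theorem~\ref{TheoConvDecDensCompSup}. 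Then I would send $m\rightarrow\infty$ and invoke Lemma~\ref{mpwrasapproxid5c}, which evaluates the measure integral at the endpoints $\tau_{2k-1},\tau_{2k}$ of the zero-intervals and disposes of the boundary term at infinity according to whether $\infty\in\mathcal{N}_\varepsilon$. A concluding bookkeeping step, using the additivity of the endpoint contributions and $\mathcal{F}\curbr{\psi_{\infty,0}}(-\xi)=0$ as in the closing lines of Theorem~\ref{TheoPWConv1}, reorganizes the telescoping terms into the stated sum $-\sum_{k=1}^K\curbr{\mathcal{F}\curbr{\psi_{\tau_{2k-1},0}}(-\xi)-\mathcal{F}\curbr{\psi_{\tau_{2k},0}}(-\xi)}$. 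When $\mathcal{N}_\varepsilon$ is Lebesgue-null every interval degenerates to a point, so $\tau_{2k-1}=\tau_{2k}$ and each summand vanishes.

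The hard part will be the coordinated handling of the two limits in the density regime, where $\Phi_X$ is not integrable and the kernel $\Phi_I(\delta t)$ is indispensable. In contrast to the d.f.\ case, where the factor $t^{-1}$ already furnished the decay needed for the integration by parts, here one must rely entirely on the factorization of $\Phi_X$ to obtain a continuous, bounded-variation antiderivative, and on Assumption~\ref{Ass2025011301} to dominate the family $\curbr{\mathfrak{R}_\delta(m,\cdot)}$ in $\delta$. Verifying that the post-integration-by-parts integrand is continuous on $[0,\infty]$, so that Lemma~\ref{mpwrasapproxid5c} genuinely applies, and uniformly controlled, and that the $\delta\downarrow 0$ and $m\rightarrow\infty$ limits may be taken in the asserted order, is the delicate technical core of the argument.
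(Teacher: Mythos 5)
Your proposal is correct and follows essentially the same route as the paper's proof: an integration by parts on the folded Fourier integral transferring the weight onto the measure $\mathcal{P}_\bareps(dt,m)$, Lemma \ref{Lem2024121902} to pass $\delta \downarrow 0$ under the integral sign and to secure the continuity of $t \mapsto \mathcal{F}\curbr{\psi_{t,0}}(-\xi)$ on $[0,\infty]$ (which is where the hypothesis $\Delta(\curbr{\xi}, D_{G_X})>0$ enters), and then Lemma \ref{mpwrasapproxid5c} for the limit $m\rightarrow\infty$, with $\mathcal{F}\curbr{\psi_{0,0}}(-\xi)=\frac{1}{2}\curbr{f_X(\xi+)+f_X(\xi-)}$ recovering the density term. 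The only cosmetic difference is that you start from the bias form of Corollary \ref{IntegraldarstBiasDecDens} rather than from (\ref{DecDens4}) directly.
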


\begin{proof}
Starting from Lemma \ref{LemFourIntDecDens}, analogous to the first part of the proof of Theorem \ref{TheoPWConv1}, one can show that
\begin{align*}
\mathfrak{d}(\xi,m) &= \liml_{\delta \downarrow 0} \rrb{ 1 - \mathcal{P}_\bareps(\infty,m) } \mathcal{F}\curbr{ \psi_{ 0, \delta } }(-\xi) \\
&\hspace{3cm} + \liml_{\delta \downarrow 0} \intl_{[0,\infty]} \curbr{ \mathcal{F}\curbr{ \psi_{ 0, \delta } }(-\xi) - \mathcal{F}\curbr{ \psi_{ t, \delta } }(-\xi) } \mathcal{P}_\bareps(dt,m).
\end{align*}
Due to Lemma \ref{Lem2024121902}, the limit as $\delta \downarrow 0$ is permissible under the sign of integration, and $\mathcal{F}\curbr{ \psi_{ t, 0 } }(-\xi)$ is uniformly continuous with respect to $t \in [0, \infty]$. Hence, Lemma \ref{mpwrasapproxid5c} applies. Particularly note, in view of Theorem \ref{TheoInvDens}, for $\xi \in \R$, that $\mathcal{F}\curbr{ \psi_{ 0, 0 } }(-\xi) = \frac{1}{2} \curbr{ f_\chi(\xi+) + f_\chi(\xi-) }$.
\end{proof}

Clearly, Assumption \ref{mpwrasapproxid3b} does not cover the quite common case in which $\Phi_\bareps$ has infinitely many zeros. The next example shows that the assumption is then not only violated due to the actual number of zeros, but that even the required uniformity of the bounded variation may not be taken for granted.

\begin{example} \label{ExampSqSin}
If $\bareps$ is associated with a triangular distribution on $[-2,2]$, then $\Phi_\bareps(t) = t^{-2} \curbr{ \sin(t) }^2$. Hence, $t\mapsto \mathcal{P}_\bareps(t,m)$ is absolutely continuous, for each $m \geq 0$. We first compute the variation on $[\pi k, \pi(k+1)]$, for $k \in \N$. The derivative $\Phi_\bareps'(t)$ on $(\pi k, \pi(k+1))$ has only one zero, denoted by $t_k$, to the right and left of which it is increasing and decreasing, respectively. Thus, by (\ref{mpwrasapproxid2}),
\begin{align*}
\frac{ \abs{ \mathcal{P}_\bareps }([\pi k, \pi(k+1)],m) }{m+1} = \intl_{\pi k}^{t_k} \Phi_\bareps'(t) \mathcal{P}_\bareps(t,m-1) dt - \intl_{t_k}^{\pi (k+1)} \Phi_\bareps'(t) \mathcal{P}_\bareps(t,m-1) dt.
\end{align*}
Straightforwardly, we evaluate these integrals by reference to the fundamental theorem of calculus, from what we get
\begin{align*}
\abs{ \mathcal{P}_\bareps }([\pi k, \pi(k+1)],m) &= \cb{ - \mathcal{P}_\bareps(t,m) }_{\pi k}^{t_k} + \cb{ \mathcal{P}_\bareps(t,m) }_{t_k}^{\pi(k+1)} = 2 (1- \mathcal{P}_\bareps(t_k,m)).
\end{align*}
In sum, it shows that the variation on $[0,\infty]$ equals
\begin{align*}
\abs{\mathcal{P}_\bareps}([0, \infty],m) = - (m+1) \intl_0^\pi \Phi_\bareps'(t) \mathcal{P}_\bareps(t,m-1) dt + 2 \suml_{k=1}^\infty (1- \mathcal{P}_\bareps(t_k,m)).
\end{align*}
The series on the right hand side converges, for every finite $m \geq 0$, since $1 - \mathcal{P}_\bareps(t,m) = \LandauO(t^{-2})$, as $t \rightarrow \infty$, by (\ref{2024122301}). But all summands are non-negative and $\sup_{m \geq 0} (1- \mathcal{P}_\bareps(t_k,m)) = 1$, for each $k \in \N$. Consequently, $\mathcal{P}_\bareps([0,\infty], m)$ is not uniformly bounded, thereby violating the entire Assumption \ref{mpwrasapproxid3b}.
\end{example}

The previous example suggests a general problem with $m$-powers composed of c.fs. that vanish on an infinite countable set of points. Apparently, in such cases, a reference to the Helly-Bray theorem is infeasible. We tackle this issue in the next paragraph.

\subsection{Test for pointwise convergence by means of alternating sums}

Often non-absolutely convergent Fourier-type integrals are still finite due to oscillatory contributions. Our results so far on pointwise convergence of the deconvolution function and density also build on the presence of oscillatory terms, however, mostly those that arise from $\Phi_\chi$ (compare, e.g., Lemma \ref{Lem2024121901} and \ref{Lem2024121902}). In contrast, oscillatory behaviour of $\Phi_\bareps$ and hence of $\mathcal{P}_\bareps(\cdot, m)$ has not yet been exploited. Furthermore, in the presence of this kind of behaviour, we can not even expect the applicability of our earlier results, e.g., from \S\ref{SectWeakConvmpwr}. These indeed will be useless if $\Phi_\bareps$ is periodic. For that reason, in this paragraph, we separately study such scenarios, which we first formalize.

\begin{assumption} \label{PointwConvAltSums3}
There exist constants $\rho>0$ and $j_\bareps \in \N_0$, for which $\mathcal{P}_\bareps(t+j\rho,m)$ is non-decreasing or non-increasing with respect to integer $j \geq j_\bareps$, for each fixed $0 \leq t \leq \rho$.
\end{assumption}

Clearly, the parameter $\rho>0$ corresponds to some kind of period. C.fs. that fulfill the above assumption are products of monotonic and periodic functions, e.g., $\Phi_\bareps(t) = t^{-2} \curbr{ \sin(t) }^2$, as well as mixtures of the form
\begin{align} \label{edwcftamospamf1}
\Phi_\bareps = a \Phi_{ \bareps_d } + (1-a) \Phi_{ \bareps_c },
\end{align}
for $0 < a \leq 1$ and c.fs. $0 \leq \Phi_{ \bareps_d }, \Phi_{ \bareps_c } \leq 1$, of which $\Phi_{ \bareps_d }$ is periodic, while $\Phi_{ \bareps_c }$ is monotonic. Now, frequently used techniques to extract oscillatory ingredients from an integral include partial integration and a sophisticated partitioning of the range of integration. Here, we additionally combine these methods with Abelian summation by parts. While integration by parts may rearrange a non-absolutely convergent to an absolutely convergent integral, Abelian summation by parts provides the analogue for series. It essentially enters the proof of the following auxiliary statement. In the sequel, for $\kappa \in \mathcal{M}(\C, \mathcal{B}(\R))$, we write
\begin{align} \label{2026071001}
\mathcal{H}_{\rho, \xi}^\kappa := \rrb{ (x-\xi)\rho : x \in D_\kappa }.
\end{align}

\begin{lemma} \label{LemPeriodmPwr}
Suppose validity of Assumption \ref{PointwConvAltSums3} and that $\mathcal{N}_\mu$ is of Lebesgue measure zero. For $\kappa \in \mathcal{M}(\C, \mathcal{B}(\R))$, with $|\kappa|(\R) = |\kappa|(D_\kappa)$, define
\begin{align} \label{PointwConvAltSums2d}
q_{t, m}(s) := \mathcal{P}_\bareps(s,m) \Phi_\kappa(s) \IndNr{ T_0 \leq s \leq t } \hspace{1cm} (t \geq T_0 \geq j_\bareps\rho).
\end{align}
Then, $\mathcal{F}\curbr{ q_{t, m} }(-\xi)$ is a continuous function of $t \geq T_0$, with $\mathcal{F}\curbr{ q_{T_0, m} }(-\xi) = 0$, for any $\xi \in \R$. In particular, $\lim_{m \rightarrow \infty} \mathcal{F}\curbr{q_{t, m}}(-\xi) = 0$, for any $t \geq T_0$, and there exists $K> 0$ with $| \mathcal{F}\curbr{q_{t, m}}(-\xi) | \leq K$, uniformly with respect to $t \geq T_0$ and $m \geq 0$, for each $\xi \in \R$ with $\Delta(\mathcal{H}_{\rho, \xi}^\kappa , 2\pi\Z ) > 0$.
\end{lemma}

\begin{proof}
The first statement follows from the continuity of $q_{t, m}(s)$ with respect to $T_0 \leq s \leq t$. In order to verify the asserted limit, defining $K_1:= \sqrt{2} (\abs{ \Re\kappa }(\R) + \abs{ \Im\kappa }(\R))$, we observe that $\norm{ \Phi_\kappa }_\infty \leq K_1$ and thus
\begin{align} \label{PointwConvAltSums12}
\abs{ \mathcal{F}\curbr{ q_{t, m} }(-\xi) } \leq K_1 \intl_{T_0}^t \mathcal{P}_\bareps(s,m) ds.
\end{align}
But $\mathcal{P}_\bareps(s,m)\leq 1$, uniformly with respect to $s \in \R$, $m\geq0$, and $\mathcal{P}_\bareps(s,m) \rightarrow 0$, as $m \rightarrow \infty$, for Lebesgue almost any $s \in \R$. Hence, according to dominated convergence, the upper bound (\ref{PointwConvAltSums12}) vanishes, as $m \rightarrow \infty$, for any fixed $t \geq T_0$. To confirm the uniform boundedness of $\mathcal{F}\curbr{ q_{t, m} }(-\xi)$, we define $J_t := \max\rrb{ j\in\N_0 : j\rho\leq t }$. Furthermore, assume that $T_0 := J_0\rho$, for an arbitrary integer $J_0 \geq j_\bareps$. Then, $J_t \geq J_0$, whenever $t \geq T_0$. Now, upon dividing the range of integration of $\mathcal{F}\curbr{ q_{t, m} }(-\xi)$ into a countable number of segments, according to the periodic component of the $m$-power, accompanied by two substitutions, we get
\begin{align} \label{PointwConvAltSums6}
\mathcal{F}\curbr{ q_{t, m} }(-\xi) &= \intl_0^\rho e^{-i\xi (s+J_0\rho)} \sigma_{J_t}(s,m,\xi) ds + \intl_{J_t\rho}^t \mathcal{P}_\bareps(s,m) e^{-i\xi s} \Phi_\kappa(s) ds,
\end{align}
where we defined
\begin{align} \label{PointwConvAltSums5c}
\sigma_{J_t}(s,m,\xi) :=&\, \suml_{j=0}^{J_t-J_0-1} \mathcal{P}_\bareps(s+(J_0+j)\rho,m) e^{-i\xi j \rho} \Phi_\kappa(s+(J_0+j)\rho).
\end{align}
In this sum, to separate the $m$-power from the trigonometric factors, we introduce
\begin{align} \label{PointwConvAltSums7}
\operatorname{C}(n, s,\xi) := \suml_{j=0}^n e^{-i\xi j \rho} \Phi_\kappa(s+(J_0+j)\rho) \hspace{1cm} (n \in \N_0).
\end{align}
Then, defining
\begin{align*}
u(n,s,m) := \mathcal{P}_\bareps(s+(J_0+n+1)\rho,m) - \mathcal{P}_\bareps(t+(J_0+n)\rho,m),
\end{align*}
by means of the Abelian sum formula (see $\S\S$182-183, pp. 322-323 in \cite{Knopp_1976}), the representation (\ref{PointwConvAltSums5c}) becomes
\begin{align} \label{PointwConvAltSums8}
\sigma_{J_t}(s,m,\xi) &= \mathcal{P}_\bareps(s+J_t\rho,m) \operatorname{C}(J_t-J_0-1,s,\xi) - \suml_{n=0}^{J_t-J_0-1} \operatorname{C}(n,s,\xi) u(n,s,m).
\end{align}
In terms of the integral definition of $\Phi_\kappa$, we can write
\begin{align*}
\operatorname{C}(n,s,\xi) = \intl_{-\infty}^\infty e^{ix(s+J_0\rho)} \suml_{j=0}^n e^{i(x-\xi) j \rho} \kappa(dx).
\end{align*}
Thereof, with the aid of the formula for geometric sums, we deduce that
\begin{align*}
\operatorname{C}(n,s,\xi) = \intl_{-\infty}^\infty \frac{\sin\rrb{\frac{(x-\xi)\rho(n+1)}{2}}}{\sin\rrb{\frac{(x-\xi)\rho}{2}}} e^{ix(s+\frac{2J_0+n}{2}\rho) - i\xi\frac{\rho n}{2}} \kappa(dx).
\end{align*}
Generally, the above ratio of sine functions is $\LandauO(n)$, for any $x \in D_\kappa$ with $(x-\xi)\rho \in 2\pi\Z$. Yet, due to the assumption $\Delta(\mathcal{H}_{\rho, \xi}^\kappa, 2\pi\Z) > 0$, the denominator is bounded away from zero and $\abs{\operatorname{C}(n,s,\xi)} \leq K_2$, for some constant $K_2>0$, uniformly with respect to $n\in\N_0$ and $0\leq s \leq \rho$. As a consequence, in view of the assumed monotonicity of $\mathcal{P}_\bareps(\cdot, m)$ and due to its uniform boundedness, the sum (\ref{PointwConvAltSums8}) satisfies the bound
\begin{align*}
\abs{ \sigma_{J_t}(s,m,\xi) } &\leq K_2 \mathcal{P}_\bareps(s+J_t\rho,m) + K_2 \abs{ \mathcal{P}_\bareps(s+J_t\rho,m) - \mathcal{P}_\bareps(s+J_0\rho,m) } \\
&\leq 3K_2.
\end{align*}
It shows the finiteness and especially the boundedness of the sequence of partial sums $\sigma_{J_t}(s,m,\xi)$, uniformly with respect to $0\leq s \leq \rho$, $J_t \geq J_0$ and $m \geq 0$. Moreover, concerning the second integral in (\ref{PointwConvAltSums6}), uniformly with respect to $t \geq T_0$ and $m\geq 0$, we have
\begin{align*}
\abs{\intl_{J_t\rho}^t \mathcal{P}_\bareps(s,m) e^{-i\xi s} \Phi_\kappa(s) ds } \leq (t-J_t\rho) K_1 \leq \rho K_1.
\end{align*}
The second inequality holds, since $0\leq t-J_t\rho <\rho$, by definition. To summarize these findings, by (\ref{PointwConvAltSums6}), uniformly with respect to $m \geq 0$ and $t \geq T_0$, it was just verified that $\abs{ \mathcal{F}\curbr{ q_{t, m} }(-\xi) } \leq 3\rho K_2 + \rho K_1$, which completes the proof.
\end{proof}

To facilitate an application of the previous lemma to the bias of deconvolution function or density, we need an appropriate factorization of the integrand, according to oscillatory and vanishing components. With regard to the deconvolution function, similar to Assumption \ref{Ass2024121901}, we therefore impose the following requirement.

\begin{assumption}[factorization of $\Phi_\chi$ (case $\mathfrak{D}(\cdot, m)$)] \label{PointwConvAltSums2}
There exist $T_0\geq 0$, $\kappa \in \mathcal{M}(\C, \mathcal{B}(\R))$, with $|\kappa|(\R) = |\kappa|(D_\kappa)$, and a function $\varphi_\chi : [T_0, \infty]\rightarrow \C$, such that $\Phi_\chi(t)=\Phi_\kappa(t)\varphi_\chi(t)$, for all $t \geq T_0$, and $t^{-1} \varphi_\chi(t)$ is continuous, of bounded variation on $[T_0,\infty]$ and vanishing, as $t \rightarrow \infty$.
\end{assumption}

Again, like in Assumption \ref{Ass2024121901}, $\kappa$ is a discrete complex measure. However, in contrast, we observe that $\varphi_\chi(t)\equiv1$ is possible, so that $\Phi_\chi$ especially can be purely oscillatory or even constant, i.e., associated with a degenerate distribution. It is now straightforward to establish the next theorem.

\begin{theorem}[pointwise convergence of $\mathfrak{D}(\cdot, m)$ II] \label{PointwConvDecFctII}
Under Assumptions \ref{Ass2026070901}, \ref{PointwConvAltSums3} and \ref{PointwConvAltSums2}, if $\mathcal{N}_\mu$ is of Lebesgue measure zero, it holds that
\begin{align*}
\liml_{m\rightarrow\infty} \frac{ \mathfrak{D}(\xi,m) + \mathfrak{D}(\xi-,m) }{2} = \frac{ F_\chi(\xi) + F_\chi(\xi-) }{2},
\end{align*}
for any $\xi \in \R$, with $\Delta(\mathcal{H}_{\rho, \xi}^\kappa , 2\pi\Z ) > 0$.
\end{theorem}

A comparison with Theorem \ref{TheoPWConv1} reveals, contrary to monotonic $m$-powers, that the pointwise convergence in the presence of periodicity only happens subject to additional restrictions on the local parameter, which avoid conflicts of the fluctuations, i.e., cancellations and thereby possible divergence.

\begin{proof}[Proof of Theorem \ref{PointwConvDecFctII}]
We start with a transformation of (\ref{PointwConv01b}) to an integral along the positive real axis only, that is, for fixed $T>T_0>0$, $m\geq 0$ and $\xi \in \R$,
\begin{align} \label{PointwConvAltSums1}
\mathfrak{I}_T(m,\xi) &= \frac{1}{\pi} \Im \rrb{ \operatorname{L}_{0, T_0}(m,\xi) + \operatorname{L}_{T_0, T}(m,\xi) },
\end{align}
where we defined
\begin{align} \label{PointwConvAltSums1Y}
\operatorname{L}_{T_1,T_2}(m,\xi) := \intl_{T_1}^{T_2} \frac{\mathcal{P}_\bareps(t,m)}{t} e^{-i\xi t} \Phi_\chi(t) dt \hspace{1cm} (T_2 > T_1 \geq 0).
\end{align}
For brevity, we write $a(t) := t^{-1} \varphi_\chi(t)$. Moreover, denoting
\begin{align*}
J_0 :=&\, \min\rrb{ j \in \N_0 : j \geq j_\bareps \mbox{ and } a(t) \mbox{ is of bounded variation on } [j\rho, \infty] },
\end{align*}
we agree that $T_0 := J_0\rho$. Now, it is obvious that the Fourier transform $\mathcal{F}\curbr{ q_{t, m} }(-\xi)$ of the function (\ref{PointwConvAltSums2d}) corresponds to the antiderivative of $s \mapsto e^{-i\xi s}\mathcal{P}_\bareps(s,m) \Phi_\kappa(s)$ on $[T_0, t]$. Therefore, for $T \geq T_0$, through integration by parts, we get
\begin{align} \label{PointwConvAltSums2c}
\operatorname{L}_{T_0,T}(m,\xi) &= a(T) \mathcal{F}\curbr{ q_{T, m} }(-\xi) - \intl_{T_0}^T \mathcal{F}\curbr{ q_{t, m} }(-\xi) a(dt).
\end{align}
In view of Lemma \ref{LemPeriodmPwr}, for fixed $m\geq 0$ and $\xi \in \R$, the Fourier transform $\mathcal{F}\curbr{ q_{t, m} }(-\xi)$ is uniformly bounded with respect to $t\geq 0$. Besides, $a(T) \rightarrow 0$ as $T \rightarrow \infty$, by assumption. Hence, in (\ref{PointwConvAltSums2c}), the first summand vanishes and, since $a(t)$ is of bounded variation on $[T_0,\infty]$, the integral with respect to $a(t)$ converges absolutely and uniformly with respect to $T\geq T_0$. Upon combining (\ref{BiasDekfkt}), (\ref{PointwConvAltSums1}) and (\ref{PointwConvAltSums2c}), for fixed $\xi \in \R$ and $m\geq 0$, concisely writing $\operatorname{D}(\xi, m) := \frac{1}{2} \curbr{ \mathfrak{D}(\xi,m)+\mathfrak{D}(\xi-,m)) }$, we thus arrive at
\begin{align*}
\operatorname{D}(\xi, m) - \frac{ F_\chi(\xi) + F_\chi(\xi-) }{2} = \frac{1}{\pi} \Im \operatorname{L}_{0, T_0}(m, \xi) - \frac{1}{\pi} \Im \intl_{T_0}^\infty \mathcal{F}\curbr{ q_{t, m} }(-\xi) a(dt).
\end{align*}
Under the current assumptions, as $m \rightarrow \infty$, the decay of the first summand is trivial. With regard to the second term, due to Lemma \ref{LemPeriodmPwr} and dominated convergence, the limit can be carried out under the integral sign, and the limit value equals zero. The proof is thus finished.
\end{proof}

Our final theorem is the analogue to Theorem \ref{PointwConvDecFctII} for the deconvolution density.

\begin{theorem}[pointwise convergence of $\mathfrak{d}(\cdot, m)$ II]
In the situation of Lemma \ref{IntegraldarstBiasDecDens}, under Assumptions \ref{Ass2025011301}, \ref{Ass2024121901} and \ref{PointwConvAltSums3}, if $\mathcal{N}_\mu$ is of Lebesgue measure zero,
\begin{align*}
\liml_{m\rightarrow\infty}\mathfrak{d}(\xi,m) = \frac{ f_\chi(\xi+) + f_\chi(\xi-) }{2},
\end{align*}
for any $\xi \in \R$, with $\Delta(\mathcal{H}_{\rho, \xi}^\kappa , 2\pi\Z ) > 0$.
\end{theorem}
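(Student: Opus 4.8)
The plan is to mirror the proof of Theorem \ref{PointwConvDecFctII}, transferring it from the distribution function to the density setting by replacing the bias representation of Corollary \ref{IntegraldarstBiasDm} with that of Corollary \ref{IntegraldarstBiasDecDens} and carrying the smoothing kernel $\Phi_I(\delta\cdot)$ through the argument. By Corollary \ref{IntegraldarstBiasDecDens} it suffices to show that $\liml_{m\rightarrow\infty}\liml_{\delta\downarrow0}\mathfrak{R}_\delta(m,\xi)=0$. First I would symmetrize the integral (\ref{2024121505}): since $\mathcal{P}_\bareps$ is even and $\Phi_I(-s)=\overline{\Phi_I(s)}$, $\Phi_X(-t)=\overline{\Phi_X(t)}$, the substitution $t\mapsto-t$ on the negative half-axis gives $\mathfrak{R}_\delta(m,\xi)=-\frac{1}{\pi}\Re\intl_0^\infty e^{-i\xi t}\Phi_I(\delta t)\Phi_X(t)\mathcal{P}_\bareps(t,m)\,dt$. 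I then fix $T_0:=J_0\rho$ as in the proof of Theorem \ref{PointwConvDecFctII}, large enough that the factorization $\Phi_X=\gamma_X\varphi_X$ of Assumption \ref{Ass2024121901} holds on $[T_0,\infty)$, that $\varphi_X$ is of bounded variation there, and that Lemma \ref{LemPeriodmPwr} applies, and I split the integral as $\intl_0^{T_0}+\intl_{T_0}^\infty$.

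On $[0,T_0]$ the argument is routine. For each fixed $m$, $\Phi_I(\delta t)\rightarrow1$ uniformly on the compact set as $\delta\downarrow0$ by continuity of $\Phi_I$ at the origin with $\Phi_I(0)=1$, so this piece converges to $-\frac{1}{\pi}\Re\intl_0^{T_0}e^{-i\xi t}\Phi_X(t)\mathcal{P}_\bareps(t,m)\,dt$; and since $\mathcal{N}_\varepsilon$ has Lebesgue measure zero, $\mathcal{P}_\bareps(t,m)\rightarrow0$ for a.e.\ $t$ while remaining bounded by $1$, so dominated convergence sends this piece to $0$ as $m\rightarrow\infty$. The core is the tail $[T_0,\infty)$. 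Here I insert $\Phi_X=\gamma_X\varphi_X$ and integrate by parts with the bounded-variation factor $b_\delta(t):=\Phi_I(\delta t)\varphi_X(t)$ together with Lemma \ref{LemPeriodmPwr}, which (with $\gamma:=\gamma_X$) identifies $\mathcal{F}\curbr{q_{t,m}}(-\xi)$ as the antiderivative of $t\mapsto e^{-i\xi t}\gamma_X(t)\mathcal{P}_\bareps(t,m)$ on $[T_0,t]$, bounds it by a constant $K$ uniformly in $t\geq T_0$ and $m\geq0$ precisely under $\Delta(W_\xi,2\pi\Z)>0$, and guarantees $\mathcal{F}\curbr{q_{T_0,m}}(-\xi)=0$ and $\mathcal{F}\curbr{q_{t,m}}(-\xi)\rightarrow0$ as $m\rightarrow\infty$. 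Integrating by parts on $[T_0,T]$ and letting $T\rightarrow\infty$ kills the boundary term (at $T_0$ because the antiderivative vanishes, at infinity because $\varphi_X(t)\rightarrow0$ while the antiderivative stays bounded), leaving $\frac{1}{\pi}\Re\intl_{T_0}^\infty\mathcal{F}\curbr{q_{t,m}}(-\xi)\,db_\delta(t)$ with $db_\delta(t)=\delta\Phi_I'(\delta t)\varphi_X(t)\,dt+\Phi_I(\delta t)\,d\varphi_X(t)$.

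Now the two limits. As $\delta\downarrow0$ the measure part $\Phi_I(\delta t)\,d\varphi_X(t)$ converges to $d\varphi_X(t)$, dominated by $\norm{\Phi_I}_\infty\leq1$ against the finite variation $\abs{\varphi_X}([T_0,\infty])$ from Assumption \ref{Ass2024121901}, while the absolutely continuous part vanishes: after the substitution $u=\delta t$ it reads $\intl_{\delta T_0}^\infty\mathcal{F}\curbr{q_{u/\delta,m}}(-\xi)\Phi_I'(u)\varphi_X(u/\delta)\,du$, whose integrand is bounded by $KM\abs{\Phi_I'(u)}$ with $M:=\sup_{t\geq T_0}\abs{\varphi_X(t)}$ and tends to $0$ pointwise because $\varphi_X(u/\delta)\rightarrow0$; Assumption \ref{Ass2025011301} (the decay $\Phi_I'(u)=\LandauO(\abs{u}^{-\nu})$, $\nu>2$, together with the integrability of $\Phi_I'$ near the origin inherited from Assumption \ref{Ass2024121902}) makes $\Phi_I'\in L^1(0,\infty)$, so dominated convergence gives $0$. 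Hence $\liml_{\delta\downarrow0}\mathfrak{R}_\delta(m,\xi)$ equals the $[0,T_0]$ limit plus $\frac{1}{\pi}\Re\intl_{T_0}^\infty\mathcal{F}\curbr{q_{t,m}}(-\xi)\,d\varphi_X(t)$. Finally, as $m\rightarrow\infty$ the first summand was already shown to vanish, and in the second Lemma \ref{LemPeriodmPwr} gives $\mathcal{F}\curbr{q_{t,m}}(-\xi)\rightarrow0$ with the uniform bound $K$, so dominated convergence against $\abs{\varphi_X}([T_0,\infty])$ yields $0$.

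I expect the main obstacle to be exactly the control of the absolutely continuous part of $db_\delta$, i.e.\ verifying that the $\delta\Phi_I'(\delta t)$ contribution is uniformly dominated and vanishes as $\delta\downarrow0$. This is where Assumption \ref{Ass2025011301} is indispensable, both for the tail decay of $\Phi_I'$ and, more delicately, for the integrability of $\Phi_I'$ near $t=0$ (which is what allows a single $\delta$-independent dominating function after the rescaling $u=\delta t$), and it is the point at which the interchange of the $\delta$- and $m$-limits must be justified with care rather than invoked. The remaining steps are the density counterparts of the corresponding steps in Theorem \ref{PointwConvDecFctII} and require no new ideas beyond Lemma \ref{LemPeriodmPwr}.
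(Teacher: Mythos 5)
Your proposal is correct and follows essentially the same route as the paper's proof: reduce to $\lim_{m}\lim_{\delta\downarrow0}\mathfrak{R}_\delta(m,\xi)=0$, split at $T_0$, dispose of $[0,T_0]$ by dominated convergence, and handle the tail via the factorization $\Phi_X=\gamma_X\varphi_X$, integration by parts against the bounded-variation factor, and Lemma \ref{LemPeriodmPwr}; the only (immaterial) difference is that you integrate by parts against $b_\delta(t)=\Phi_I(\delta t)\varphi_X(t)$ first and then split $db_\delta$, whereas the paper first writes $\Phi_I(\delta t)=-\delta\int_t^\infty\Phi_I'(\delta s)\,ds$, applies Fubini, and treats the inner integral as in Theorem \ref{PointwConvDecFctII}. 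The integrability of $\Phi_I'$ near the origin that you flag as the delicate point is needed in exactly the same way in the paper's version (to dominate the $\delta$-limit and to use $\int_0^\infty\Phi_I'(s)\,ds=-\Phi_I(0)$), so you are not assuming anything beyond what the paper itself relies on.
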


\begin{proof}
Consider fixed $\delta > 0$, $m \geq 0$ and $\xi \in \R$. Define
\begin{align*}
\operatorname{I}_\delta(m,\xi) := \intl_{T_0}^\infty e^{-i\xi t} \Phi_\iota(\delta t) \varphi_\chi(t) \Phi_\kappa(t) \mathcal{P}_\bareps(t, m) dt.
\end{align*}
Then, elementary manipulations of (\ref{2024121505}), due to Assumption \ref{Ass2024121901}, show that
\begin{align*}
\mathfrak{R}_\delta(m, \xi) = -\frac{1}{\pi} \Re \rrb{ \intl_0^{T_0} e^{-i\xi t} \Phi_\iota(\delta t) \Phi_\chi(t) \mathcal{P}_\bareps(t, m) dt + \operatorname{I}_\delta(m,\xi) }.
\end{align*}
On the right hand side, the first integral approaches a finite limit as $\delta \downarrow 0$ that eventually vanishes as $m \rightarrow \infty$. Concerning the second integral, with the aid of $\Phi_\iota(\delta t) = -\delta \int_t^\infty \Phi_\iota'(\delta s) ds$, we receive
\begin{align*}
\operatorname{I}_\delta(m,\xi) = - \intl_{\delta T_0}^\infty \Phi_\iota'(s) \intl_{T_0}^\frac{s}{\delta} \varphi_\chi(t) \Phi_\kappa(t) \mathcal{P}_\bareps(t, m) dt ds,
\end{align*}
the interchange in the order of integration being permissible, due to the asymptotic behaviour of $\Phi_\iota'$. The interior integral, as a function of $\delta^{-1} s$, can be treated similar to the integral from the proof of Theorem \ref{PointwConvDecFctII}. In this fashion, it eventually follows that the limit as $\delta \downarrow 0$ can be carried out under the sign of integration, with
\begin{align*}
\liml_{\delta \downarrow 0} \operatorname{I}_\delta(m,\xi) = - \intl_{T_0}^\infty \mathcal{F}\curbr{ q_{t, m} }(-\xi) \varphi_\chi(dt).
\end{align*}
Finally, according to Lemma \ref{LemPeriodmPwr} and since $\varphi_\chi$ is of bounded variation on $[T_0, \infty]$, the limit as $m \rightarrow \infty$ also can be evaluated under the sign of integration, provided $\xi \in \R$ with $\Delta(\mathcal{H}_{\rho, \xi}^\kappa, 2 \pi \Z) > 0$. Thus, the proof is completed.
\end{proof}

\section{Conclusion} \label{SubSummary}

To summarize our findings, in this text, we essentially introduced the deconvolution function $\mathfrak{D}(\cdot, m)$ and studied its convergence properties, as $m \rightarrow \infty$. Thereby, we were especially able to justify a novel formula for the recovery of an arbitrary d.f. $F_\chi$ from the convolution transform $F_{\chi\ast\mu}$, for a broad range of probability measures $\chi$ and $\mu$. The deconvolution function is defined as the convolution of the transformed d.f. $F_{\chi \ast \bareps}$ with the so-called deconvolution kernel $\mathfrak{K}_\bareps(\cdot, m)$, a Neumann sum of convolution powers with truncation index $m$, whose Fourier-Stieltjes transform corresponds to the partial sum of a geometric series. In view of the obtained convergence behaviour, $\mathfrak{D}(\xi, m)$ yields an approximation for $F_\chi(\xi)$, for each fixed $(\xi,m) \in \R\times\N_0$. Thus, if $\chi$ is associated with a degenerate distribution, the function $\mathfrak{D}(\cdot, m)$ is an approximate identity. Furthermore, $\mathfrak{K}_\bareps(\cdot, m)$ corresponds to an approximation for the inverse of $F_\bareps$ with respect to convolution. Similar results were established in the density domain, however, at the cost of slightly restrictive conditions. For most of our study, we deployed the representation of $\mathfrak{D}(\cdot, m)$ as a Fourier-type integral, as it is more convenient and numerically more stable compared to the Neumann sum. This representation also furnishes a good starting point for an asymptotic analysis, e.g., to quantify the rate of convergence, in order to assess the magnitude of the bias. In the Fourier domain, the bias $\mathfrak{D}(\cdot, m) - F_\chi$ (see \S\ref{SecFourierIntDecFct}) represents a generalized Laplace transform with phase function $(m+1)\log(1-\Phi_\bareps)$. Accordingly, the main contribution for large $m$ to the total value of the bias comes from a neighborhood of the minima of $\Phi_\bareps$, since the kernel $\mathcal{P}_\bareps(\cdot,m)$ is at these points greater than elsewhere. More precisely, the graph of the kernel at these so-called critical points becomes sharper as $m$ grows, whereas the remaining area becomes relatively negligible. Integrals with this property are well-known as Laplace-type. They are usually evaluated by Laplace's method (see \cite{Olver1974, paris_2011}), aiming for an approximation of the integrand near the critical points, through the coefficients in the respective asymptotic expansions. The exact asymptotic behaviour of the bias $\mathfrak{D}(\cdot, m)-F_\chi$ thus sensitively depends on the local structure of the c.fs. $\Phi_\chi$ and $\Phi_\bareps$. A straightforward estimate shows that the rate of convergence is of exponential order, if $\Phi_\bareps$ does not vanish in the closure of the support of $\Phi_\chi$. Yet, this is rarely the case. Instead, the derivation of asymptotic statements can be quite challenging, because standard techniques for Laplace-type integrals may not be applicable, as the frequently required existence of local power series expansions is too restrictive. For example, if $\Phi_\bareps$ is exponentially small near a critial point compared with $\Phi_\chi$, power series approximations for both functions are obviously unavailable. To overcome such difficulties, we propose the use of Mellin-Barnes integrals (see \cite{KamPar2001}). With the aid of these, in \cite{Kaiser04102025}, we established va\-rious results, through which one can specify the rate for the convergence of $\norm{ \mathfrak{D}(\cdot, m) - F_\chi}_\infty$, as $m \rightarrow\infty$, if at least one of the probability measures $\chi$ and $\bareps$ is associated with a stable distribution. Results that moreover aim for a wider applicability were developed in \cite{kaiser2026generalizinglaplacesmethodmeans}.

\backmatter

\section*{Declarations}

\bmhead{Funding} This work did not receive a funding.

\begin{appendices}

\section{The sine integral} \label{AppSiInt}

Of major importance in Fourier analysis is the sine integral
\begin{align} \label{Integralsinus01}
\Si(\xi) := \intl_0^\xi \frac{\sin(t)}{t} dt \hspace{1cm} (\xi \in \R).
\end{align}
One can easily verify that $\Si(0)=0$ and $\Si(a\xi) = \sgn(a) \Si(\abs{a}\xi)$, for every $\xi>0$ and $a\in\R\setminus\rrb{0}$, as well as that (see, e.g., Ch. 2, $\S$3.3, p. 41 in \cite{Olver1974})
\begin{align} \label{2025011101}
\lim_{\xi \rightarrow \infty} \Si(a\xi) = \sgn(a) \frac{\pi}{2} \hspace{1cm} (a \in\R).
\end{align}
Hence, $\Si : \overline \R \rightarrow \overline \R$ is uniformly continuous. Finally, elementary computations show that
\begin{align} \label{Integralsinus02c}
0 \leq \abs{\Si(\xi)} \leq \Si(\pi) \hspace{1cm} (\xi\in\R).
\end{align}

\section{Finiteness of special Fourier transforms} \label{AppFinSpecFT}

We here examine the convergence behaviour of two Fourier transforms. The first of them plays a key role in the recovery of a d.f. from its c.f. and also in our investigations of the convergence of the deconvolution function. The properties of this Fourier transform are essentially derived from the sine integral, compare Appendix \ref{AppSiInt}.

\begin{lemma} \label{Lem2024121901}
For an arbitrary c.f. $\Phi_\chi$, define
\begin{align} \label{PointwConv03}
\Psi_{S, T}(t) := \frac{ \Phi_\chi(t) }{i2\pi t} \IndNr{ S \leq \abs{t} \leq T } \hspace{1cm} (T > S > 0).
\end{align}
Then,
\begin{align} \label{ConvPropDecFct3b}
\mathcal{F}\curbr{\Psi_{S, T}}(-\xi) &= - \frac{1}{\pi} \intl_{ -\infty }^\infty \rrb{\Si((\xi-x)T) - \Si((\xi-x)S)} F_\chi(dx) \hspace{1cm} (\xi \in \R)
\end{align}
and $\sup_{S, T > 0}| \mathcal{F}\curbr{\Psi_{S, T}}(-\xi) | \leq \pi^{-1} \Si(\pi)$. In particular, for fixed $T > S > 0$ and $\xi \in \R$, each of the following limits exists and can be computed from (\ref{ConvPropDecFct3b}) under the integral sign:
\begin{align*}
\mathcal{F}\curbr{\Psi_{0, T}}(-\xi) &:= \lim_{S \downarrow 0} \mathcal{F}\curbr{\Psi_{S, T}}(-\xi) \\
\mathcal{F}\curbr{\Psi_{S, \infty}}(-\xi) &:= \lim_{T \rightarrow \infty} \mathcal{F}\curbr{\Psi_{S, T}}(-\xi) \\
\mathcal{F}\curbr{\Psi_{0, \infty}}(-\xi) &:= \lim_{ S \downarrow 0 } \mathcal{F}\curbr{\Psi_{S, \infty}}(-\xi)
\end{align*}
Specifically $\lim_{ T \rightarrow \infty } \mathcal{F}\curbr{\Psi_{0, T}}(-\xi) = \lim_{ S \downarrow 0 } \mathcal{F}\curbr{\Psi_{S, \infty}}(-\xi)$ and
\begin{align} \label{2024122001}
\mathcal{F}\curbr{\Psi_{S, \infty}}(-\xi) = \frac{1 - F_\chi(\xi) - F_\chi(\xi-)}{2} + \frac{1}{\pi} \intl_{ -\infty }^\infty \Si((\xi-x)S) F_\chi(dx).
\end{align}
\end{lemma}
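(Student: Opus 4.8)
The plan is to compute $\mathcal{F}\curbr{\Psi_{S,T}}(-\xi)$ directly from the definition (\ref{DefFT}) of the Fourier transform together with the integral representation (\ref{DefcFX}) of the characteristic function, and then to pass to all the limits by dominated convergence. First I would write, from (\ref{PointwConv03}), $\mathcal{F}\curbr{\Psi_{S,T}}(-\xi) = \intl_{S \le \abs{t} \le T} e^{-i\xi t}\,\Phi_X(t)/(i2\pi t)\,dt$, substitute $\Phi_X(t) = \intl_{-\infty}^\infty e^{itx} F_X(dx)$, and interchange the two integrations. Since $t$ is confined to the compact set $\curbr{S \le \abs{t} \le T}$, the integrand $e^{i(x-\xi)t}/(i2\pi t)$ is there bounded in modulus by $(2\pi S)^{-1}$, and as $F_X$ is a probability measure the resulting double integral is absolutely convergent; Fubini's theorem therefore licenses the interchange.

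Second, I would evaluate the inner integral $\intl_{S \le \abs{t} \le T} e^{i(x-\xi)t}/(i2\pi t)\,dt$. Splitting the domain into $[S,T]$ and $[-T,-S]$ and substituting $t \mapsto -t$ on the latter collapses the exponential to $2i\sin((x-\xi)t)$, reducing the integral to $\pi^{-1}\intl_S^T \sin((x-\xi)t)/t\,dt$. The change of variable $w = (x-\xi)t$ then identifies this, for every sign of $x-\xi$ by oddness of $\Si$ (and trivially as zero when $x=\xi$), with $\pi^{-1}\rb{\Si((x-\xi)T) - \Si((x-\xi)S)}$. Using oddness once more to replace $x-\xi$ by $\xi-x$ reproduces exactly (\ref{ConvPropDecFct3b}).

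Third, for the uniform bound I would use that $\Si$ maps $[0,\infty)$ into $[0,\Si(\pi)]$, being nonnegative there with global maximum $\Si(\pi)$ attained at $\pi$, cf. (\ref{Integralsinus02c}). For fixed $x$ the two arguments $(\xi-x)T$ and $(\xi-x)S$ carry the same sign, so by oddness $\abs{\Si((\xi-x)T) - \Si((\xi-x)S)} \le \Si(\pi)$, and integration against the probability measure $F_X$ yields $\abs{\mathcal{F}\curbr{\Psi_{S,T}}(-\xi)} \le \pi^{-1}\Si(\pi)$. This constant, being free of $S$, $T$ and $\xi$, also furnishes the dominator in every limit below.

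Fourth, I would pass to the limits by dominated convergence against $F_X$. As $S \downarrow 0$, continuity of $\Si$ at the origin gives $\Si((\xi-x)S) \to 0$, producing $\mathcal{F}\curbr{\Psi_{0,T}}$; as $T \to \infty$, the asymptotics (\ref{2025011101}) give $\Si((\xi-x)T) \to \sgn(\xi-x)\,\pi/2$ pointwise, with $\sgn 0 = 0$ covering $x=\xi$, producing $\mathcal{F}\curbr{\Psi_{S,\infty}}$. The ensuing $\sgn$-integral evaluates through $\intl_{-\infty}^\infty \sgn(\xi-x) F_X(dx) = \Prob{X < \xi} - \Prob{X > \xi} = F_X(\xi-) + F_X(\xi) - 1$, which supplies the term $\tfrac{1}{2}\rb{1 - F_X(\xi) - F_X(\xi-)}$ and hence (\ref{2024122001}); sending the remaining parameter to its limit in either order annihilates the residual $\Si((\xi-x)S)$ term, so $\mathcal{F}\curbr{\Psi_{0,\infty}}$ exists and is order-independent. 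The step I expect to be the main obstacle is precisely this $T \to \infty$ passage: the value of $\sgn$ at its own discontinuity must be handled with care, since the convention $\sgn 0 = 0$ is exactly what splits the mass of any $F_X$-atom at $\xi$ symmetrically between the two tails in $\intl_{-\infty}^\infty \sgn(\xi-x) F_X(dx)$, thereby reproducing the half-sum $\tfrac{1}{2}\rb{F_X(\xi)+F_X(\xi-)}$ of the one-sided limits that features in the inversion formulae built on this lemma. A secondary subtlety is that the sharp constant $\Si(\pi)$, rather than $2\Si(\pi)$, rests on the nonnegativity of $\Si$ on the half-line and not merely on the modulus bound (\ref{Integralsinus02c}).
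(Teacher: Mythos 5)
Your proposal is correct and follows essentially the same route as the paper's proof: substitute the integral definition of $\Phi_X$, interchange the integrals by Fubini, reduce the inner integral to a difference of sine integrals, bound it by $\pi^{-1}\Si(\pi)$, and pass to all limits by dominated convergence using (\ref{2025011101}) and the convention $\sgn 0 = 0$ to obtain (\ref{2024122001}). Your explicit remarks on the same-sign/nonnegativity argument needed for the sharp constant $\Si(\pi)$ and on the role of $\sgn 0 = 0$ at an $F_X$-atom make precise two points the paper's proof leaves implicit.
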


\begin{proof}
Upon invoking the integral definition of $\Phi_\chi$, for $T > S > 0$ and $\xi \in \R$, we get
\begin{align*}
\mathcal{F}\curbr{\Psi_{S, T}}(-\xi) &= - \intl_{ -\infty }^\infty \intl_S^T \frac{e^{is(\xi-x)} - e^{-is(\xi-x)}}{i2\pi s} ds F_\chi(dx),
\end{align*}
where the interchange in the order of integration is permitted due to the boundedness of the integrand. After substitution, writing the result in terms of the sine integral (\ref{Integralsinus01}), we arrive at (\ref{ConvPropDecFct3b}). Thus, the oddness of the sine integral and inequality (\ref{Integralsinus02c}) imply the indicated bound. Furthermore, in view of the convergence properties of the sine integral, Lebesgue's dominated convergence theorem eventually admits the evaluation of $\mathcal{F}\curbr{\Psi_{S, T}}(-\xi)$ in the limits $S \downarrow0$ and $T \rightarrow \infty$ under the integral sign, in arbitrary order. Specifically due to (\ref{2025011101}), we have
\begin{align*}
\liml_{ T \rightarrow \infty\ } \frac{1}{\pi} \intl_{ -\infty }^\infty \Si((\xi-x)T) F_\chi(dx) &= \frac{1}{2} \intl_{ -\infty }^\infty \sgn(\xi-x) F_\chi(dx) \\
&= \frac{1}{2} \rb{F_\chi(\xi-) -  (1-F_\chi(\xi))},
\end{align*}
which finishes the proof.
\end{proof}

The next Fourier transform is required to establish convergence of the deconvolution density.

\begin{lemma} \label{Lem2024121902}
For an arbitrary c.f. $\Phi_\chi$, under Assumptions \ref{Ass2024121902}, \ref{Ass2025011301} and \ref{Ass2024121901}, define
\begin{align} \label{2024121502}
\psi_{\tau, \delta}(t) := \frac{ 1 }{2\pi} \Phi_\iota(\delta t) \Phi_\chi(t) \IndNr{ \abs{t} \geq \tau } \hspace{1cm} (\tau \geq 0, ~ \delta > 0).
\end{align}
Then, for any $\tau \geq 0$ and $\xi \in \R$ with $\Delta( \curbr{\xi}, D_\kappa) > 0$, there exists $K > 0$, such that $\sup_{\delta > 0} | \mathcal{F}\curbr{\psi_{\tau, \delta}}(-\xi) | \leq K$, and the limit $\mathcal{F}\curbr{\psi_{\tau, 0}}(-\xi) := \lim_{\delta \downarrow 0} \mathcal{F}\curbr{\psi_{\tau, \delta}}(-\xi)$ exists, with
\begin{align} \label{2024122002}
\mathcal{F}\curbr{\psi_{\tau, 0}}(-\xi) = \IndNr{\tau < T_0} \mathcal{F}\curbr{\psi_\tau}(-\xi) + \Re \theta_\tau(-\xi),
\end{align}
where $T_0$ is as in Assumption \ref{Ass2024121901}, $\psi_\tau(t) := (2\pi)^{-1} \IndNr{\tau \leq \abs{t} \leq T_0} \Phi_\chi(t)$ and
\begin{align} \label{2024122003}
\theta_\tau(-\xi) := \intl_{-\infty}^\infty\intl_{\max\curbr{T_0, \tau}}^\infty \frac{ e^{i(x-\xi) \max\curbr{T_0, \tau} } - e^{i(x-\xi) u } }{ i\pi(x-\xi) } \varphi_\chi(du) \kappa(dx).
\end{align}
\end{lemma}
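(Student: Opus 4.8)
The plan is to split the range of integration in
\[ \mathcal{F}\curbr{\psi_{\tau, \delta}}(-\xi) = \frac{1}{2\pi} \intl_{\abs{t} \geq \tau} e^{-i\xi t} \Phi_I(\delta t) \Phi_X(t)\, dt \]
at the threshold $a := \max\curbr{T_0, \tau}$, into the bounded central part $\tau \leq \abs{t} \leq T_0$ (which is empty unless $\tau < T_0$) and the tail $\abs{t} > a$. On the central part the integrand is supported on a compact set and dominated by $\abs{\Phi_X} \leq 1$; since $\Phi_I$ is a c.f. we have $\abs{\Phi_I} \leq 1$ and $\Phi_I(\delta t) \to \Phi_I(0) = 1$ pointwise, so dominated convergence produces $\IndNr{\tau < T_0}\mathcal{F}\curbr{\omega}(-\xi)$ in the limit $\delta \downarrow 0$, with an obvious uniform-in-$\delta$ bound (and this contribution is automatically real by the symmetry $\omega(-t) = \overline{\omega(t)}$). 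The whole difficulty therefore lies in the tail.

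For the tail I would first fold the two half-lines together. Using the conjugate symmetries $\overline{\Phi_X(t)} = \Phi_X(-t)$ and $\overline{\Phi_I(t)} = \Phi_I(-t)$, the substitution $t \mapsto -t$ identifies the integral over $t < -a$ with the complex conjugate of the one over $t > a$, so that the tail contributes $\Re\rb{ \pi^{-1} \intl_a^\infty e^{-i\xi t} \Phi_I(\delta t) \Phi_X(t)\, dt }$. On $t \geq a \geq T_0$ the factorization of Assumption \ref{Ass2024121901} applies; inserting $\gamma_X(t) = \intl_{-\infty}^\infty e^{itx} G_X(dx)$ and interchanging the order of integration (legitimate for fixed $\delta > 0$, because $\Phi_I(\delta\, \cdot) \in L^1(\R)$ by Assumption \ref{Ass2024121902}, while $\abs{\gamma_X} \leq \abs{G_X}(\overline\R)$ and $\varphi_X$ are bounded) reduces the tail to $\pi^{-1}\intl_{-\infty}^\infty G_X(dx) \intl_a^\infty e^{i(x-\xi)t}\Phi_I(\delta t)\varphi_X(t)\, dt$.

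The crux is the inner integral, which is only conditionally convergent. I would integrate by parts against the antiderivative $e^{i(x-\xi)t}/\rb{i(x-\xi)}$ of the oscillatory factor; this is admissible precisely because $\Delta(\curbr{\xi}, D_{G_X}) > 0$ forces $x - \xi \neq 0$ throughout the support $D_{G_X}$ of $G_X$, keeping the denominator bounded away from zero. Since $\Phi_I(\delta t)\varphi_X(t) \to 0$ as $t \to \infty$, only the boundary term at $t = a$ and the Stieltjes integral against $d\curbr{\Phi_I(\delta t)\varphi_X(t)} = \Phi_I(\delta t)\varphi_X(dt) + \delta\, \Phi_I'(\delta t)\varphi_X(t)\, dt$ survive. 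The three resulting terms are bounded uniformly in $\delta$: the first two by $\abs{\Phi_I} \leq 1$ together with the total variation of $\varphi_X$, and the third, after the rescaling $s = \delta t$, by $\norm{\varphi_X}_\infty \intl_0^\infty \abs{\Phi_I'(s)}\, ds$, which is finite because $\Phi_I'$ is bounded near the origin and $\LandauO\curbr{\abs{s}^{-\nu}}$ with $\nu > 2$ at infinity by Assumption \ref{Ass2025011301}. Combined with $\Delta(\curbr{\xi}, D_{G_X}) > 0$ and $\abs{G_X}(\overline\R) < \infty$, this delivers the asserted uniform bound $K$.

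Finally I would let $\delta \downarrow 0$ under both integral signs by dominated convergence: $\Phi_I(\delta t) \to 1$ in the boundary term and in the $\varphi_X(dt)$-integral, while the rescaled third term vanishes since $\varphi_X(s/\delta) \to \varphi_X(\infty) = 0$ for fixed $s > 0$. Collecting the surviving pieces and using $\intl_a^\infty \varphi_X(du) = -\varphi_X(a)$ recasts the inner limit as $\intl_a^\infty \frac{e^{i(x-\xi)a} - e^{i(x-\xi)u}}{i\pi(x-\xi)}\, \varphi_X(du)$, which is exactly the integrand of $\theta_\tau(-\xi)$; adding the central contribution gives (\ref{2024122002}). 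The main obstacle is thus the uniform-in-$\delta$ control of the conditionally convergent tail, where the interplay of the integration by parts against the bounded-variation $\varphi_X$ and the decay hypothesis on $\Phi_I'$ must be handled with care, the folding, the Fubini step, and the central-part limit being comparatively routine.
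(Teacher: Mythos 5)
Your proposal is correct and follows essentially the same route as the paper: split off the compact central part, fold the tail by conjugate symmetry, apply Fubini with $G_X$, exploit the bounded variation of $\varphi_X$ and the integrability of $\Phi_I'$ to get a bound uniform in $\delta$, and pass to the limit by dominated convergence. The only (cosmetic) difference is organizational — you integrate by parts directly against the product $\Phi_I(\delta t)\varphi_X(t)$, whereas the paper first writes $\Phi_I(\delta t)=-\delta\int_t^\infty\Phi_I'(\delta s)\,ds$ and then bounds the truncated transforms $\mathcal{F}\curbr{q_{\tau,s}}$ via $\varphi_X(t)=-\int_t^\infty\varphi_X(du)$; the two computations are equivalent and yield the same three contributions and the same limit (\ref{2024122003}).
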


\begin{proof}
First of all, because $\overline{ \psi_{\tau, \delta}(t) } = \psi_{\tau, \delta}(-t)$ and $2\Re z = z + \overline z$, for all $z \in \C$, we observe that
\begin{align*}
\mathcal{F}\curbr{\psi_{\tau, \delta}}(-\xi) = \IndNr{\tau < T_0} \frac{1}{2\pi}\intl_{ \curbr{ \tau \leq \abs{t} \leq T_0 } } e^{-i\xi t} \Phi_\iota(\delta t) \Phi_\chi(t) dt + \Re \theta_{\delta, \tau}(-\xi),
\end{align*}
for every $\tau \geq 0$, $\delta > 0$ and $\xi \in \R$, in terms of
\begin{align*}
\theta_{\delta, \tau}(-\xi) := \frac{1}{\pi}  \intl_{\max\curbr{T_0, \tau}}^\infty e^{-i\xi t} \Phi_\iota(\delta t) \Phi_\chi(t) dt.
\end{align*}
It is obvious that the first summand in the above decomposition is uniformly bounded and tends to $\mathcal{F}\curbr{\psi_\tau}(-\xi)$, as $\delta \downarrow 0$. With regard to the second addend, we denote
\begin{align*}
g_{\tau, s}(t) := \Phi_\chi(t) \IndNr{ \max\curbr{T_0, \tau} \leq t \leq s } \hspace{1cm} (s \geq \max\curbr{T_0, \tau} ).
\end{align*}
Moreover, since $\Phi_\iota(t)$ is continuously differentiable and vanishing at infinity, we can write $\Phi_\iota(\delta t) = -\delta \int_t^\infty \Phi_\iota'(\delta s) ds$, whenever $t > 0$. Thereof, by additional substitution, we get
\begin{align} \label{2024121903}
\theta_{\delta, \tau}(-\xi) = - \frac{1}{\pi} \intl_{ \delta \max\curbr{T_0, \tau} }^\infty \Phi_\iota'(s) \mathcal{F}\curbr{ g_{\tau, \frac{s}{\delta}} }(-\xi) ds,
\end{align}
where the interchange in the order of integration is permissible, due to the asymptotic behaviour of $\Phi_\iota'$ and because $| \mathcal{F}\curbr{ g_{\tau, s} }(-\xi)| \leq s$. Notice that the function $g_{\tau, s}$ differs from (\ref{PointwConv03}) by a reciprocal $t$-power. Consequently, uniform boundedness with respect to $s > 0$ of the associated Fourier transform is not natural. However, by Assumption \ref{Ass2024121901}, $\Phi_\chi(t)$ admits a factorization into an oscillatory and a decreasing component, for all $t \geq T_0$. Thus,
\begin{align*}
\mathcal{F}\curbr{ g_{\tau, s} }(-\xi) &= \intl_{-\infty}^\infty \intl_{ \max\curbr{T_0, \tau} }^s e^{i(x-\xi)t} \varphi_\chi(t) dt \kappa(dx) \hspace{1cm} (\xi \in \R, ~ s \geq \max\curbr{T_0, \tau}).
\end{align*}
Because $\varphi_\chi(t)$ is of bounded variation on $[T_0, \infty]$ and vanishing, as $t \rightarrow \infty$, we have $\varphi_\chi(t) = -\int_{t}^\infty \varphi_\chi(du)$ and thereby
\begin{align*}
\mathcal{F}\curbr{ g_{\tau, s} }(-\xi) &= \intl_{-\infty}^\infty\intl_{ \max\curbr{T_0, \tau} }^\infty \frac{ e^{i(x-\xi) \max\curbr{T_0, \tau} } - e^{i(x-\xi)\min\curbr{s, u}} }{ i(x-\xi) } \varphi_\chi(du) \kappa(dx).
\end{align*}
The integrand of the interior integral is uniformly bounded with respect to $s \geq \max\curbr{T_0, \tau}$ and $x \in D_\kappa$. A simple use of the triangle inequality thus shows that $| \mathcal{F}\curbr{ g_{\tau, s} }(-\xi) | \leq K_1$, for a constant $K_1 > 0$, uniformly with respect to $s \geq \max\curbr{T_0, \tau}$. By application to (\ref{2024121903}), we arrive at $| \theta_{\delta, \tau}(-\xi) | \leq K_1 \int_0^\infty |\Phi'(s)| ds < \infty$. Furthermore, as $\delta \downarrow 0$, the limit of $\mathcal{F}\curbr{ g_{\tau, \frac{s}{\delta}} }(-\xi)$ is admissible under the sign of integration. Finally, also in (\ref{2024121903}), the limit as $\delta \downarrow 0$ can be performed under the integral sign, with
\begin{align*}
\liml_{\delta \downarrow 0} \theta_{\delta, \tau}(-\xi) = - \theta_\tau(-\xi) \intl_0^\infty \Phi_\iota'(s) ds.
\end{align*}
But $\int_0^\infty \Phi_\iota'(s) ds = -\Phi_\iota(0) = -1$, because $\Phi_\iota$ is a c.f.. Altogether, the limit of $\mathcal{F}\curbr{\psi_{\tau, \delta}}(-\xi)$, as $\delta \downarrow 0$, therefore has the asserted form (\ref{2024122002}).
\end{proof}

\section{Inversion of Fourier transforms} \label{AppInvCF}

Numerous inversion formulae are available for Fourier transforms, since each fa\-mi\-ly of functions requires different criteria \cite{koerner_1988, Lukacs1970, Pinsky2002, titchmarsh1937}. With regard to the focus of this text, we here confine to d.fs. and densities of complex measures. Corresponding inversion formulae immediately can be obtained from those for probabiliy measures, upon separating real and imaginary part and appealing to Jordan's decomposition (Theorem 9.30 in \cite{axler2019measure}). Technically speaking, our first two formulae are consequences of Lemma \ref{Lem2024121901}. Particularly the first of them is a barely known result due to \cite{GilPelaez1951}.

\begin{theorem}[unilateral inversion formula for d.f.] \label{SatzInvHalbstr}
For any $\chi \in \mathcal{M}(\C, \mathcal{B}(\R))$, it holds that
\begin{align*}
\frac{F_\chi(\xi) + F_\chi(\xi-)}{2} = \frac{1}{2} + \liml_{ \substack{ T_1 \downarrow 0 \\ T_2 \uparrow \infty } } \frac{1}{2\pi} \intl_{T_1}^{T_2} \frac{e^{i\xi t}\Phi_\chi(-t) - e^{-i\xi t}\Phi_\chi(t)}{it} dt \hspace{1cm} (\xi \in \R),
\end{align*}
where the order of the limits is arbitrary.
\end{theorem}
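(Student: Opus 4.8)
The plan is to recognise that the integrand of the asserted formula is, up to an overall sign, exactly the integrand of the Fourier transform $\mathcal{F}\curbr{\Psi_{S, T}}(-\xi)$ analysed in Lemma \ref{Lem2024121901}, so that the whole statement reduces to evaluating $-\mathcal{F}\curbr{\Psi_{0, \infty}}(-\xi)$ by means of the formula (\ref{2024122001}).

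First I would fix $\xi \in \R$ together with finite $0 < T_1 < T_2 < \infty$ and recall from (\ref{PointwConv03}) that $\Psi_{T_1, T_2}(t) = \Phi_X(t)/(i2\pi t)\,\IndNr{T_1 \leq \abs{t} \leq T_2}$, whence $\mathcal{F}\curbr{\Psi_{T_1, T_2}}(-\xi) = (i2\pi)^{-1} \intl_{T_1 \leq \abs{t} \leq T_2} t^{-1} e^{-i\xi t} \Phi_X(t)\, dt$. Splitting the symmetric range into its positive and negative halves and applying the substitution $t \mapsto -t$ to the latter folds the two-sided integral into a single one-sided integral, producing the antisymmetric numerator and giving
\[
\mathcal{F}\curbr{\Psi_{T_1, T_2}}(-\xi) = -\frac{1}{2\pi} \intl_{T_1}^{T_2} \frac{e^{i\xi t}\Phi_X(-t) - e^{-i\xi t}\Phi_X(t)}{it}\, dt.
\]
Thus the truncated integral in the theorem equals $-\mathcal{F}\curbr{\Psi_{T_1, T_2}}(-\xi)$ identically, with no approximation involved.

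It then remains to pass to the limit. By Lemma \ref{Lem2024121901}, the limits $T_1 \downarrow 0$ and $T_2 \uparrow \infty$ of $\mathcal{F}\curbr{\Psi_{T_1, T_2}}(-\xi)$ exist and coincide in either order --- this is precisely the ``arbitrary order'' clause of the theorem --- and formula (\ref{2024122001}) gives their common value once $S \downarrow 0$ is performed under the integral sign, using $\Si((\xi - x) S) \rightarrow \Si(0) = 0$ dominated by $\Si(\pi)$ via (\ref{Integralsinus02c}). This yields $\mathcal{F}\curbr{\Psi_{0, \infty}}(-\xi) = \tfrac{1}{2}(1 - F_X(\xi) - F_X(\xi-))$. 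Negating and adding the constant $\tfrac{1}{2}$ from the statement then produces exactly $\tfrac{1}{2}(F_X(\xi) + F_X(\xi-))$, as claimed.

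The only genuinely delicate point is the folding step: one must keep careful track of the factors $i$ and $2\pi$ and of the sign flip introduced by $t \mapsto -t$, and confirm that the negative-half contribution combines with the positive half so as to yield the antisymmetric numerator $e^{i\xi t}\Phi_X(-t) - e^{-i\xi t}\Phi_X(t)$ rather than a symmetric one. Everything else --- the existence and order-independence of the limits and the evaluation through the limiting values (\ref{2025011101}) of the sine integral --- is already packaged in Lemma \ref{Lem2024121901}. Should a self-contained derivation be preferred, the same computation can be run directly by inserting $\Phi_X(t) = \intl_{-\infty}^\infty e^{itx} F_X(dx)$, applying Fubini on the bounded range $[T_1, T_2]$, collapsing the numerator to $2i\intl_{-\infty}^\infty \sin(t(\xi - x))\, F_X(dx)$, and integrating in $t$ to recover differences of sine integrals, i.e.\ the representation (\ref{ConvPropDecFct3b}).
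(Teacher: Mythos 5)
Your proposal is correct and follows exactly the route the paper intends: the appendix states that both inversion formulae are ``essentially consequences of Lemma \ref{Lem2024121901}'', and your folding of the symmetric integral into $-\mathcal{F}\curbr{\Psi_{T_1,T_2}}(-\xi)$ followed by the evaluation of $\mathcal{F}\curbr{\Psi_{0,\infty}}(-\xi)$ via (\ref{2024122001}) and (\ref{2025011101}) is precisely that derivation, with the signs and the order-independence of the limits handled correctly. The paper itself only cites Gil--Pelaez and does not write these details out, so your write-up simply fills in the intended argument.
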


We proceed with the standard formula for recovering a d.f. from the associated Fourier-Stieltjes transform, which may not be missing in any textbook on Fourier methods in probability theory. It generalizes Theorem 3.2.1 in \cite{Lukacs1970} and corresponds to Theorem 2.3.11 in \cite{Pinsky2002}, if $F_\chi$ is absolutely continuous. The formula basically can be obtained from Theorem \ref{SatzInvHalbstr}, by considering the difference between two points. In this way, one avoids the appearance of a second limit.

\begin{theorem}[bilateral inversion formula for d.f.] \label{SatzInvZuw}
For any $\chi \in \mathcal{M}(\C, \mathcal{B}(\R))$, with $\phi_{a,b}$ as in (\ref{cFUab}) and finite $a < b$, we have
\begin{align*}
\frac{ F_\chi(b) + F_\chi(b-) }{2} - \frac{ F_\chi(a) + F_\chi(a-) }{2} = \liml_{T \rightarrow \infty} \frac{\sgn(b-a)}{2\pi} \intl_{-T}^T \phi_{a,b}(-t) \Phi_\chi(t) dt.
\end{align*}
\end{theorem}

In the previous theorem, the finiteness of $a$ and $b$ is crucial, since the limit of the integrand, e.g., as $a\rightarrow -\infty$, is always unspecified. Lastly, we also mention a slight modification of Corollary 3 to Theorem 3.3.2 in \cite{Lukacs1970} as a means to recover functions from the space $L^1(\R)$, i.e., densities of absolutely continuous complex measures.

\begin{theorem}[inversion formula for $L^1(\R)$] \label{TheoInvDens}
Under Assumption \ref{Ass2024121902}, for any $q \in L^1(\R)$ and $\xi \in \R$, with finite $q(\xi+)$ and $q(\xi-)$, we have
\begin{align*}
\frac{ q(\xi+) + q(\xi-) }{2} = \liml_{\delta \downarrow 0} \frac{1}{2\pi} \intl_{-\infty}^\infty e^{-i\xi t} \Phi_\iota(\delta t) \mathcal{F}\curbr{q}(t) dt.
\end{align*}
\end{theorem}

\end{appendices}

\bibliography{References.bib}

\end{document}